\title[Weakly K\"ahler hyperbolic manifolds and the GGL conjecture]{Weakly K\"ahler hyperbolic manifolds and the Green--Griffiths--Lang conjecture}
\author[F. Bei\and S. Diverio\and P. Eyssidieux\and S. Trapani]{Francesco Bei \and Simone Diverio \and Philippe Eyssidieux\and Stefano Trapani}
\address{Francesco Bei and Simone Diverio \\ Dipartimento di Matematica \lq\lq Guido Castelnuovo\rq\rq{} \\ SAPIENZA Universit\`a di Roma \\ Piazzale Aldo Moro 5 \\ I-00185 Roma.}
\email{bei@mat.uniroma1.it \\ diverio@mat.uniroma1.it } 
\address{Philippe Eyssidieux \\ Université Grenoble Alpes \\ CNRS \\ IF \\ 38000 Grenoble\\ France} 
\email{philippe.eyssidieux@univ-grenoble-alpes.fr} 
\address{Stefano Trapani \\  Università di Roma \lq\lq Tor Vergata\rq\rq{} \\ Via della Ricerca Scientifica 1 \\ I-00133 Roma.} 
\email{trapani@mat.uniroma2.it} 
\keywords{Kähler hyperbolic manifold,  weakly Kähler hyperbolic manifold, spectral gap, Atiyah $L^2$-theory, Lang's conjecture, Kobayashi hyperbolicity, Ahlfors' current, non-Kähler and null locus, pluripotential theory, variety of general type}
\subjclass[2020]{Primary: 32Q15; Secondary: 32J25, 32J27, 32Q45, 53C55.}
\thanks{The first-named author is partially supported by the \lq\lq Gruppo Nazionale per le Strutture Algebriche, Geometriche e le loro Applicazioni\rq\rq{} of the Istituto Nazionale di Alta Matematica \lq\lq Francesco Severi\rq\rq{}.}
\thanks{The second-named author is partially supported by the ANR Programme: D\'efi de tous les savoirs (DS10) 2015, \lq\lq GRACK\rq\rq, Project ID: ANR-15-CE40-0003ANR, and by the ANR Programme: D\'efi de tous les savoirs (DS10) 2016, \lq\lq FOLIAGE\rq\rq, Project ID: ANR-16-CE40-0008. He is also partially supported by the \lq\lq Gruppo Nazionale per le Strutture Algebriche, Geometriche e le loro Applicazioni\rq\rq{} of the Istituto Nazionale di Alta Matematica \lq\lq Francesco Severi\rq\rq{} as well as the \lq\lq SEED PNR\rq\rq{} project of SAPIENZA Universit\`a di Roma}
\thanks{The third-named author  was partially supported by the ANR project Hodgefun ANR-16-CE40-0011-01.}
\thanks{The fourth-named author is supported by the \lq\lq Excellence project\rq\rq{} of the Italian ministry of University and Research}
\date{\today}
\dedicatory{In loving memory of Jean-Pierre Demailly}
\theoremstyle{plain}
\newtheorem{thm}{Theorem}[section]
\newtheorem{cor}[thm]{Corollary}
\newtheorem{bigthm}{Theorem}
\newtheorem*{mainthm*}{Main Theorem}
\newtheorem{lem}[thm]{Lemma}
\newtheorem{prop}[thm]{Proposition}
\newtheorem{quest}[thm]{Question}
\theoremstyle{remark}
\newtheorem{rem}[thm]{Remark}
\newtheorem{ex}[thm]{Example}
\theoremstyle{definition}
\newtheorem{defn}[thm]{Definition}
\newcommand{\R}{\mathbb{R}}
\newcommand{\C}{\mathbb{C}}
\DeclareMathOperator{\dvol}{dvol}
\DeclareMathOperator{\im}{im}
\DeclareMathOperator{\vol}{vol}
\begin{document}
\bibliographystyle{amsalpha}
\maketitle
	
\begin{abstract}
We introduce the notion of weakly Kähler hyperbolic manifold which generalizes that of Kähler hyperbolic manifold given in the early '90s by M. Gromov, and establish its basic features. We then investigate its spectral properties and show a spectral gap result (on a suitable modification). 

As applications, we prove that weakly Kähler hyperbolic manifolds are of general type and we study the geometry of their subvarieties and entire curves, verifying ---among other things--- various aspects of the Lang and the Green--Griffiths conjectures for this class of manifolds.
\end{abstract}
	
\section{Introduction}
Ran the year 1991 when M. Gromov introduced, in one of his seminal papers \cite{Gro91}, the concept of Kähler hyperbolic manifold. These are compact Kähler manifolds endowed with a Kähler form whose pull-back to the universal cover not only is $d$-exact, but also has a primitive which is bounded. Gromov showed in that paper several beautiful and highly non trivial properties for such a manifold $X$, among which we find in order: the vanishing of $L^2$-Hodge numbers $h^{p,q}_{(2)}(X)$ for $p+q\ne\dim X$ and their non-vanishing for $p+q=\dim X$, the consequent non-vanishing for the Euler characteristic of the sheaves of holomorphic $p$-forms as well as for the topological Euler characteristic, the bigness of their canonical bundle (and hence their projectivity, by Moishezon's theorem), and stated their Kobayashi hyperbolicity.

Since then, a quite huge amount of literature has been written in connection and around the notion of Kähler hyperbolicity; just to cite very few, we find \cite{Kol95,Eys97,Hit00,JZ00,McM00,CX01,McN02,BKT13,CY18,Li19,MP22}. 

Here we focus on a variation around the notion of Kähler hyperbolicity, especially in connection with Kobayashi hyperbolicity and distribution of entire curves, which naturally arose in trying to answer the following question raised in \cite{BD21}. Kähler hyperbolic manifolds form a remarkable class of Kobayashi hyperbolic projective manifolds, and Lang's conjecture predicts that these should be of general type together with all of their subvarieties (regardless of whether they are singular or not). Given that general typeness of Kähler hyperbolic manifolds and their \emph{smooth} subvarieties is already contained in \cite{Gro91}, does it hold true for singular subvarieties as well? 

Answering this question requires to prove the vanishing/non-vanishing of the $L^2$-Hodge numbers as above, but this time for a desingularization of a singular subvariety of a Kähler hyperbolic manifold. In this case the Kähler hyperbolic metric is pulled-back and so it is no longer positive everywhere, but it degenerates along some exceptional locus (cf. Definition \ref{def:semiK} of semi-Kähler hyperbolic manifold, introduced in \cite{Eys97}). Especially, we loose completeness, which is particularly annoying in $L^2$-Hodge theory.

We introduce, more generally, the notion of weakly Kähler hyperbolic manifold (cf. Definition \ref{def:weaklyK}, where we allow the cohomology class to be merely big and nef instead of Kähler). To some extent, the definition of Kähler hyperbolicity can be seen as an incarnation of negative (holomorphic sectional) curvature. Thus, morally, a weakly Kähler hyperbolic manifold should be thought as \lq\lq non-positively curved\rq\rq{} and \lq\lq negatively curved\rq\rq{} in a Zariski dense open set which corresponds to the locus where the big and nef cohomology class is Kähler (in a precise sense, cf. Subsection \ref{subsect:nonKnull}). 

It turns out that we still can prove the spectral gap, vanishing and non-vanishing theorems in this general situation (cf. (i) of Theorem \ref{thm:spectralgap} and (ii) of Theorem \ref{thm:spectralgap}), but the price to pay is that these are obtained after passing to a suitable modification of the given weakly Kähler hyperbolic manifold. 

We collect these results in the following statement.

\begin{bigthm}[Spectral gap for weakly Kähler hyperbolic manifolds]
Let $M$ be a weakly Kähler hyperbolic manifold of complex dimension $m$. 

Then, there exists a modification $\nu\colon M'\to M$ and a Kähler form $\omega$ on $M'$ such that $0$ is not in the spectrum of the $L^2$ $\bar\partial$-Laplacian $\Delta_{\bar\partial,p,0}$ of the Kähler universal cover of $(M',\omega)$ for all $0\le p\le m-1$, and moreover $\Delta_{\bar\partial,m,0}$ has closed range and nontrivial (infinite dimensional indeed) kernel. 
\end{bigthm}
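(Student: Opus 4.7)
I would approach this as an extension of Gromov's original spectral gap argument for K\"ahler hyperbolic manifolds to the weakly K\"ahler hyperbolic setting, with the extra work concentrated in the construction of the modification $(M', \omega)$.

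\textbf{Modification and K\"ahler form with bounded primitive.} Since $\{\omega_0\}$ is big and nef, the Collins--Tosatti generalization of Nakamaye's theorem identifies the non-K\"ahler (null) locus of $\{\omega_0\}$ with its augmented base locus. I would resolve this locus by suitable blow-ups to obtain a modification $\nu\colon M' \to M$ and a decomposition $\nu^*\{\omega_0\} = [\omega_K] + [E]$ cohomologically, with $\omega_K$ K\"ahler on $M'$ and $E$ an effective $\mathbb{R}$-divisor supported on the exceptional locus. The K\"ahler form $\omega$ on $M'$ must be chosen so that its lift $\tilde\omega$ to the universal cover admits a bounded primitive $\tilde\eta$. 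The pullback $\nu^*\tilde\omega_0 = d(\nu^*\tilde\eta_0)$ already contributes a primitive bounded in any smooth reference metric on $M'$ (by smoothness of $\nu$, $\Gamma$-equivariance, and compactness of $M'$); the main obstacle is that a K\"ahler correction is needed to cure the degeneracy of $\nu^*\omega_0$ along $E$, and exceptional divisor classes on $M'$ generally fail to sit in the image of the Gromov comparison map $H^2_b(\pi_1) \to H^2(M')$. The K\"ahler class of $\omega$ must therefore be selected in tight coordination with the weakly K\"ahler hyperbolic structure --- presumably via its very definition.

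\textbf{Spectral gap for $(p,0)$-forms with $p \le m-1$.} With $\tilde\omega = d\tilde\eta$ and $\tilde\eta$ bounded, Gromov's integration by parts goes through on $\widetilde{M'}$. For a compactly supported $(p,0)$-form $\alpha$ with $p \le m-1$, primitivity is automatic: $L^{m-p+1}\alpha$ has bidegree $(m+1, m-p+1)$, which vanishes on a complex manifold of dimension $m$. Weil's pointwise identity then gives $\tilde\omega^{m-p}\wedge\alpha\wedge\bar\alpha = c_p |\alpha|^2_{\tilde\omega}\,dV_{\tilde\omega}$ with $c_p > 0$, and substituting $\tilde\omega = d\tilde\eta$ in one factor and integrating by parts,
\begin{equation*}
c_p\|\alpha\|^2_{L^2} \;=\; \int_{\widetilde{M'}} d\tilde\eta\wedge\tilde\omega^{m-p-1}\wedge\alpha\wedge\bar\alpha \;=\; \pm\int \tilde\eta\wedge\tilde\omega^{m-p-1}\wedge d(\alpha\wedge\bar\alpha);
\end{equation*}
the $L^\infty$-bound on $\tilde\eta$ yields $\|\alpha\|^2_{L^2} \le C\, \|d\alpha\|_{L^2}\|\alpha\|_{L^2}$. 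Decomposing $d\alpha = \partial\alpha + \bar\partial\alpha$ and bounding $\partial\alpha$ by complex conjugation (reducing to $\bar\partial$ acting on the conjugate $(0,p)$-form $\bar\alpha$) gives $\|\alpha\|^2 \le C'\|\bar\partial\alpha\|^2$. Passing to the $L^2$-closure yields $0 \notin \mathrm{spec}(\Delta_{\bar\partial, p, 0})$ for all $0 \le p \le m-1$.

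\textbf{Kernel and closed range at bidegree $(m,0)$.} Gromov's trick fails at $p=m$, since $\tilde\omega^{m-p} = 1$ carries no $d\tilde\eta$ factor. I would instead apply Atiyah's $L^2$-index theorem to $\bar\partial$ acting on $\mathcal{O}_{M'}$: $\sum_q (-1)^q \dim_\Gamma \mathcal{H}^{0,q}_{(2)}(\widetilde{M'}) = \chi(M', \mathcal{O}_{M'})$. Transferring the spectral gap from the previous step by complex conjugation kills $\mathcal{H}^{0,q}_{(2)}$ for $q < m$, leaving $\dim_\Gamma \mathcal{H}^{0, m}_{(2)} = (-1)^m\chi(M', \mathcal{O}_{M'})$, and Hodge symmetry transports this to $\dim_\Gamma \mathcal{H}^{m, 0}_{(2)}$. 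Positivity of $(-1)^m\chi(M', \mathcal{O}_{M'})$ should follow by adapting Gromov's Hopf-type computation to the weakly K\"ahler hyperbolic case (anchored in the bigness of $K_{M'}$ established elsewhere in the paper). Infiniteness of $\pi_1(M') = \pi_1(M)$ --- itself forced by the weakly K\"ahler hyperbolic hypothesis, as a simply connected example would contradict bigness of $K$ --- then upgrades the positive $\Gamma$-dimension to an infinite-dimensional $L^2$-kernel. Closed range for $\Delta_{\bar\partial, m, 0}$ follows from the spectral gaps on the neighboring bidegree $(m, 1)$-forms (obtained from the previous step by duality) via a standard Hodge-theoretic argument.
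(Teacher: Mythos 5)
The central gap is in your first two steps. You correctly sense that everything hinges on producing a K\"ahler form $\omega$ on $M'$ whose lift $\tilde\omega$ admits a bounded primitive, and you defer its construction to a choice made ``in tight coordination with the weakly K\"ahler hyperbolic structure''. But no such $\omega$ can exist: a compact K\"ahler manifold carrying a K\"ahler form with bounded primitive on the universal cover is K\"ahler hyperbolic, hence Kobayashi hyperbolic, hence contains no rational curves, whereas the exceptional locus of any nontrivial modification $\nu\colon M'\to M$ is covered by rational curves (the paper itself points out that blow-ups are never K\"ahler hyperbolic in dimension $>1$). The paper's proof never produces such an $\omega$; it keeps two forms in play: a genuine K\"ahler form $\omega$ on $M'$ (with no boundedness property), which defines the $L^2$ structure, and the pulled-back big and nef form $\mu$, which does have a bounded primitive upstairs but degenerates along the exceptional divisor $E$. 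The form $\mu$ is regularized by solving the degenerate Monge--Amp\`ere equations $(\mu+t\omega+i\partial\bar\partial\phi_t)^m=e^{\lambda(t)}\omega^m$ in the sense of \cite{BEGZ10}, yielding representatives $\mu_t=\mu+i\partial\bar\partial\phi_t$ whose bounded primitives $\alpha+i\bar\partial\tilde\phi_t$ persist and which satisfy $\mu_t+t\omega\ge C_1\omega$ uniformly outside a small neighbourhood $U_\varepsilon$ of $E$. Gromov's integration by parts is then run against $\tilde\mu_t^{\,m-p-r}\wedge\tilde\omega^r$ rather than $\tilde\omega^{m-p}$, and the missing positivity over $\tilde U_\varepsilon$ is compensated by a spectral localization statement (Proposition \ref{prop:normcontrol}, via Lemma \ref{lem:maintool}): forms in the range of the spectral projection $E(\lambda_0)$ cannot concentrate more than half of their $L^2$ mass over $\tilde U_\varepsilon$ once $\operatorname{vol}(U_\varepsilon)$ is small. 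That localization lemma, proved by elliptic estimates, Sobolev embedding and a counting argument over translates of a fundamental domain, is the technical heart of the theorem and is entirely absent from your outline; without it (or the nonexistent bounded primitive of $\tilde\omega$) the spectral gap for $0\le p\le m-1$ does not follow.

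Your non-vanishing step is also circular. You propose to get $(-1)^m\chi(M',\mathcal O_{M'})>0$ ``anchored in the bigness of $K_{M'}$ established elsewhere in the paper'', but in the paper the bigness of the canonical bundle (Theorem \ref{thm:weaklygeneral}) is \emph{deduced from} the existence of a nontrivial $L^2$ holomorphic $m$-form via Koll\'ar's criterion, not the other way around. Once vanishing for $q<m$ is known, Atiyah's index theorem only gives $(-1)^m\chi(\mathcal O_{M'})=h^{0,m}_{(2)}\ge 0$; strict positivity requires the Vafa--Witten trick: twist the Dolbeault--Dirac operator by the flat connection $\nabla_0+is\alpha$, where $\alpha$ is the bounded primitive of $\tilde\mu$, note that the $\Gamma_s$-index is a polynomial in $s$ with leading coefficient proportional to $\int_M\mu^m>0$, and conclude that $0\in\sigma(\overline\eth_m)$, which together with closed range forces a nontrivial kernel. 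The remaining pieces of your outline (closed range at $(m,0)$ from the gap at $(m,1)$ via the Hodge star, and infinite-dimensionality of the kernel from $\Gamma$-invariance) do match the paper.
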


While the general strategy to establish such a spectral gap result dates back to the works \cite{Gro91,Eys97}, several quite recent and deep technologies in K\"ahler geometry are needed to achieve the proof, such as the theory of complex Monge--Ampère equations in big cohomology classes developed in \cite{BEGZ10}. 

All in all, we will see that this gives in particular that a weakly Kähler hyperbolic manifold admits a modification which is of general type, and hence it is of general type itself. Thus, this provides a confirmation of Lang's conjecture for Kähler hyperbolic manifolds (cf. Theorem \ref{thm:langkhyp}).

\begin{bigthm}[Lang's conjecture for Kähler hyperbolic manifolds]
Let $M$ be a K\"ahler hyperbolic manifold. 

Then, $M$ is Kobayashi hyperbolic and given any $X\subseteq M$, a possibly singular closed subvariety, we have that any desingularization $\hat X\to X$ is of general type, \textsl{i.e.} $X$ is of general type.
\end{bigthm}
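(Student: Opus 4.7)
The plan is to handle the two conclusions of the theorem separately: Kobayashi hyperbolicity is the original observation of Gromov, while general-typeness of the (possibly singular) subvarieties $X\subseteq M$ reduces cleanly, via a desingularization, to the consequence of Theorem~A that every weakly K\"ahler hyperbolic manifold is of general type.

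For the Kobayashi hyperbolicity I would invoke the classical argument already sketched in \cite{Gro91}. By Brody's lemma it suffices to rule out nonconstant entire curves $f\colon\C\to M$; lifting such a curve to $\tilde f\colon\C\to\tilde M$ and combining the bounded primitive $\eta$ of $\tilde\omega_M$ with Stokes' theorem on larger and larger discs gives
\[
A(R)=\int_{\mathbb{D}_R}\tilde f^*\tilde\omega_M=\int_{\partial\mathbb{D}_R}\tilde f^*\eta=O(R),
\]
and this linear-in-$R$ upper bound is incompatible, after a Brody reparametrization, with the growth of $A(R)$ forced by the non-constancy of the curve.

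For the general-typeness of subvarieties, let $X\subseteq M$ be a positive-dimensional closed subvariety, let $\pi\colon\hat X\to X$ be any resolution of singularities, and write $\iota\colon\hat X\to M$ for the natural holomorphic map. The smooth, semipositive, closed $(1,1)$-form $\omega_{\hat X}:=\iota^*\omega_M$ represents a nef cohomology class whose top self-intersection
\[
\int_{\hat X}\omega_{\hat X}^{\dim\hat X}=\int_X\omega_M^{\dim X}>0
\]
is strictly positive, so the class is big and nef on $\hat X$. Lifting $\iota$ to a holomorphic map $\widetilde\iota\colon\widetilde{\hat X}\to\tilde M$ between universal covers (possible since $\tilde M$ is simply connected) and pulling back $\eta$ yields a smooth primitive $\widetilde\iota^*\eta$ of the pull-back of $\omega_{\hat X}$, bounded with respect to any fixed smooth Hermitian metric on $\widetilde{\hat X}$ because $d\widetilde\iota$ has uniformly bounded operator norm by equivariance and compactness of $\hat X$ and $M$. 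Hence $\hat X$ is weakly K\"ahler hyperbolic in the sense of Definition~\ref{def:weaklyK}.

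The theorem then follows at once from the consequence of Theorem~A, emphasized in the introduction, that every weakly K\"ahler hyperbolic manifold is of general type: applied to $\hat X$ this is exactly the statement to be proved for $X$. The whole technical weight has thus been pushed into Theorem~A; the present reduction hinges only on the robustness of the notion of weak K\"ahler hyperbolicity under pull-back by an arbitrary holomorphic map between compact manifolds whose image induces a big class, and I expect the main subtlety to lie precisely in this stability together with the correct bookkeeping of universal covers, especially when the map $\iota_*\colon\pi_1(\hat X)\to\pi_1(M)$ is neither injective nor surjective.
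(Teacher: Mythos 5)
Your treatment of the subvariety statement is correct and is essentially the paper's own proof: desingularizing, pulling back $\omega_M$ under $\iota\colon\hat X\to M$, observing that the resulting smooth semipositive form is nef with $\int_{\hat X}(\iota^*\omega_M)^{\dim\hat X}=\int_X\omega_M^{\dim X}>0$ and hence big (a nef class with positive top self-intersection is big, cf. Subsection \ref{subsect:nonKnull}), and producing a bounded primitive on the universal cover by lifting $\iota$ and bounding the operator norm of its differential, is exactly the combination of Lemma \ref{bounded}, Proposition \ref{prop:genfinite} and Theorem \ref{thm:weaklygeneral} used in the paper; you merely bypass the intermediate notion of semi-K\"ahler hyperbolicity. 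The worry you raise about $\iota_*$ on fundamental groups is immaterial: the lift to universal covers exists simply because $\widetilde{\hat X}$ is simply connected, which is all that Lemma \ref{bounded} requires.

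The one genuine gap is in your justification of Kobayashi hyperbolicity. The Stokes estimate $A(R)\le C\,\operatorname{Length}_\omega(\partial\mathbb D_R)$, hence $A(R)=O(R)$ after Brody reparametrization, is correct, but the asserted incompatibility of linear area growth with non-constancy is false as stated: a non-constant Brody curve can perfectly well have linear area growth (for instance $z\mapsto e^z$ into $\mathbb P^1$ has bounded spherical derivative, and its spherical area over $\mathbb D_R$ grows linearly in $R$, the mass being concentrated in a vertical strip of bounded width). To reach a contradiction one must use the isoperimetric bound differentially: writing $L(R)=\int_{\partial\mathbb D_R}|f'|_\omega\,ds$ and $A'(R)=\int_{\partial\mathbb D_R}|f'|_\omega^2\,ds$, Cauchy--Schwarz gives $L(R)^2\le 2\pi R\,A'(R)$, so $A(R)^2\le 2\pi C^2R\,A'(R)$, and integrating $A'/A^2\ge 1/(2\pi C^2R)$ from some $R_0$ with $A(R_0)>0$ forces $1/A$ to become negative for large $R$, a contradiction (no Brody reparametrization is needed). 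Alternatively, simply invoke Corollary \ref{cor:khypkobhyp}, where the paper deduces Kobayashi hyperbolicity from Duval's criterion that a linear isoperimetric inequality for holomorphic discs already implies hyperbolicity.
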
 

We saw that weakly Kähler hyperbolic manifolds are of general type. On the one hand, as such, the Green--Griffiths conjecture predicts that there should exists a proper subvariety containing the image of every entire curve. On the other hand, if a compact Hermitian manifold of non-positive holomorphic sectional curvature admits an entire curve (with bounded derivative), then it must totally geodesically land following the directions where the curvature is zero \cite{Kob78}. Therefore, pursuing the philosophical parallel between weakly Kähler hyperbolicity and (semi)negative holomorphic sectional curvature, we show that entire curves in a weakly Kähler hyperbolic manifold have to sit in the non-Kähler locus of the cohomology classes which provide weakly Kähler hyperbolicity, which is always a proper subvariety, see Theorem \ref{thm:GG}.

\begin{bigthm}[Green--Griffiths' conjecture for weakly Kähler hyperbolic manifolds]
Let $M$ be a weakly Kähler hyperbolic manifold. Then, there exists a proper subvariety $Z\subsetneq M$ which contains the image of all entire curves traced in $M$. 
\end{bigthm}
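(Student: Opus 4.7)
The plan is to identify the desired proper subvariety with the non-K\"ahler locus $E_{nK}(\alpha) \subsetneq M$ of any big and nef class $\alpha$ witnessing the weak K\"ahler hyperbolicity of $M$, and to prove that every non-constant entire curve $f\colon \C\to M$ has image contained in $E_{nK}(\alpha)$. Since $\alpha$ is big, $E_{nK}(\alpha)$ is automatically a proper analytic subvariety, so setting $Z := E_{nK}(\alpha)$ would conclude. The argument combines Gromov's bounded-primitive trick on the universal cover with the formalism of Ahlfors currents.

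Concretely, fix a smooth closed representative $\omega$ of $\alpha$ with $\pi^*\omega = d\eta$, $\eta$ a bounded smooth $1$-form on the universal cover $\pi\colon \tilde M\to M$, and lift $f$ to $\tilde f\colon \C\to\tilde M$. Fix also a reference hermitian metric $h$ on $M$ and set $A(r) := \int_{\Delta_r} f^*\omega_h$ and $L(r) := \int_{\partial\Delta_r} \|df\|_h$. By the Ahlfors length-area lemma there is a sequence $r_k\to\infty$ along which $L(r_k)/A(r_k) \to 0$, and then a weakly convergent subsequence of $A(r_k)^{-1} f_*[\Delta_{r_k}]$ produces the Ahlfors current $T$, a non-zero closed positive current of bidimension $(1,1)$ supported in $\overline{f(\C)}$. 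Next, compute $T\cdot \alpha$ directly: by Stokes applied upstairs,
$$
\int_{\Delta_r} f^*\omega \;=\; \int_{\Delta_r} d(\tilde f^*\eta) \;=\; \int_{\partial\Delta_r} \tilde f^*\eta,
$$
whose absolute value is bounded by $\|\eta\|_\infty \cdot L(r)$ since $\|\tilde f^*\eta\|$ is pointwise at most $\|\eta\|_\infty \cdot \|df\|_h$. Dividing by $A(r_k)$ and passing to the limit gives $T\cdot\alpha = 0$.

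The final and most delicate step is to conclude, from $T$ non-zero and positive with $T\cdot\alpha = 0$ for $\alpha$ big and nef, that $\operatorname{supp}(T) \subseteq E_{nK}(\alpha)$. The idea is to exploit Boucksom's description of the non-K\"ahler locus as the common analytic singular set of all K\"ahler currents in $\alpha$: for such a K\"ahler current $T' \ge \varepsilon\,\omega_h$ off its singular set $E\supseteq E_{nK}(\alpha)$, one expects an inequality of the form $T\cdot\alpha \ge \varepsilon \cdot (\text{non-pluripolar mass of } T \text{ outside } E)$, which would force this mass to vanish; intersecting over all admissible $T'$ confines the support of $T$ to $E_{nK}(\alpha)$. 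I expect this step to be the main obstacle: making sense of the intersection $T\wedge T'$ in the presence of the analytic singularities of $T'$ and possible concentration of $T$ on them requires the non-pluripolar product formalism in big cohomology classes of \cite{BEGZ10}, already pervasive elsewhere in the paper, together with a careful handling of the pluripolar part of $T$ (via Demailly regularization or a direct approximation of $T$ by smoother positive currents).
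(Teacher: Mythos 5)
Your first half is exactly the paper's Lemma \ref{lem:current}: the choice $Z=E_{nK}(\alpha)$ (proper since $\alpha$ is big), the bounded primitive upstairs, Stokes on discs, and an admissible sequence giving $T\cdot\alpha=0$ for an Ahlfors current $T$. The problem is the final step, and it is a genuine gap on two counts. First, the implication you would need even if that step worked is not the right one: knowing $\operatorname{supp}(T)\subseteq E_{nK}(\alpha)$ does not give $f(\C)\subseteq E_{nK}(\alpha)$. An Ahlfors current is a limit of normalized integration currents along a subsequence of radii, and its support can be strictly smaller than $\overline{f(\C)}$ --- the standard example being an entire leaf of a foliation attracted to a compact leaf, where the curve is Zariski dense but the Ahlfors current is the integration current on the attracting curve. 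The theorem demands a subvariety containing the \emph{image} of every entire curve, so an argument that only constrains $\operatorname{supp}(T)$ proves a strictly weaker statement. Second, the step itself ("$T\cdot\alpha=0$ with $T$ positive nonzero forces the mass of $T$ off $E_{nK}(\alpha)$ to vanish") is left as an expectation; pairing $T$ against a K\"ahler current $T'\in\alpha$ with analytic singularities is not the cohomological pairing without further work, mass of $T$ can concentrate on the singular set of $T'$, and the non-pluripolar formalism of \cite{BEGZ10} does not directly apply since $T$ is a $(1,1)$-bidimension current not lying in the class.

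What is missing is precisely the mechanism the paper uses to convert the numerical vanishing into a statement about the curve itself: Brunella's intersection inequality (Proposition \ref{prop:ahlfors}), $[\Phi]\cdot[Y]\ge 0$ whenever $Y$ is a hypersurface with $f(\C)\not\subset Y$. In the proof of Theorem \ref{thm:GG} one takes a K\"ahler current $R\in[\mu]$ smooth off $E_{nK}([\mu])$ (by \cite{Bou04}), passes to a modification $\tau\colon\hat M\to M$ on which $\tau^*R$ is cohomologous to $\omega+\sum_j\lambda_j E_j$ with $\omega$ K\"ahler, each $E_j$ mapping into $E_{nK}([\mu])$; if $f(\C)\not\subseteq E_{nK}([\mu])$ the curve lifts to $\hat f$, the vanishing $[\Phi]\cdot[\tau^*\mu]=0$ together with $[\Phi]\cdot[\omega]=1$ forces $[\Phi]\cdot[E_{j_0}]<0$ for some $j_0$, and Brunella's lemma then forces $\hat f(\C)\subseteq E_{j_0}$, hence $f(\C)\subseteq E_{nK}([\mu])$, a contradiction. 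This modification-plus-\cite{Bru99} step both replaces your delicate pluripotential claim and closes the support-versus-image gap; as written, your proposal does not reach the stated theorem.
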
 

This locus thus morally encodes the flat directions in our manifold, and the algebraic degeneracy of entire curves is proved by coupling Ahlfors' currents with a sort of linear isoperimetric inequality which holomorphic discs must satisfy thanks to the weakly Kähler hyperbolicity assumption.

\smallskip

Let us finally point out that, even if we are unfortunately not able for the moment to produce examples of weakly Kähler hyperbolic manifolds which are not semi-Kähler hyperbolic (see Remark \ref{rem:weaklynotsemi}), Theorems A and C are completely new even in the semi-K\"ahler hyperbolic setting (except for Theorem A in dimension 2 \cite{Eys97}), which is already a broad and very natural generalization of Kähler hyperbolicity. Moreover, the semi-Kähler hyperbolic setting is the minimal one needed in order to treat singular subvarieties of Kähler hyperbolic manifolds, and hence to prove Theorem B. 

Last but not least, our definition of weakly Kähler hyperbolic manifolds seems to be the most general possible in order to make the so-called Vafa--Witten trick work to get the non-vanishing in $L^2$-cohomology, and this is one reason why we believe it is the right one to consider in this context.

\subsubsection*{Acknowledgements} We warmly thank Benoît Claudon for pointing out the reference \cite{ABR92}.

\section{Background material, basic definitions and properties, and examples}

In this section we first of all recall the classical notion of Kähler hyperbolicity, and then consider a new weaker notion which we shall call weakly Kähler hyperbolicity which is better suited and perhaps more natural to consider from the point of view of birational geometry.

We shall provide examples and basic properties, and then recall some basic facts in spectral theory, Kobayashi hyperbolicity, special loci of cohomology classes on compact Kähler manifolds, Ahlfors currents, and pluripotential theory in order to make more reader friendly and self-contained the rest of the paper.

\subsection{K\"ahler hyperbolic and weakly Kähler hyperbolic manifolds: definitions and first properties}

We begin by recalling the original definition of Kähler hyperbolic manifolds due to Gromov. Let $(M,\omega)$ be a compact connected K\"ahler manifold, and let $\pi\colon(\tilde M,\tilde\omega)\to (M,\omega)$ be its K\"ahler universal cover, so that $\pi^*\omega:=\tilde\omega$. 

\begin{defn}[\cite{Gro91}]
The manifold $M$ is said to be \emph{K\"ahler hyperbolic} if it carries a Kähler metric $\omega$ such that there exists on $\tilde M$ a $1$-form $\alpha$ in such a way that $\tilde\omega=d\alpha$ and $|\alpha|_{\tilde\omega}$ is bounded on $\tilde M$.
\end{defn}

A natural corresponding notion in the framework of birational geometry was considered in \cite{Eys97} (see also \cite{Kol95}), and is the following.

\begin{defn}\label{def:semiK}
Given a real closed $(1,1)$-form $\mu$ on $M$ such that $\mu$ is non-negative and moreover strictly positive on a nonempty Zariski open set, one says that $(M,\mu)$ is \emph{semi-K\"ahler hyperbolic} if there exists a $1$-form $\alpha$ on $\tilde M$ such that $\tilde\mu:=\pi^*\mu=d\alpha$ and $|\alpha|_{\tilde\omega}$ is bounded on $\tilde M$.
\end{defn} 

Note that the boundedness of the primitive does not depend on the particular K\"ahler metric $\omega$ that we fix on $M$ since they are all quasi-isometric. 

More generally, we introduce the following variant:

\begin{defn}\label{def:weaklyK}
A compact K\"ahler manifold $M$ is \emph{weakly Kähler hyperbolic} if there exists a closed real $(1,1)$-form $\mu$ whose cohomology class $[\mu]\in H^{1,1}(M,\mathbb R)$ is big and nef and such that there exists on $\tilde M$ a $1$-form $\alpha$ with $\tilde\mu:=\pi^*\mu=d\alpha$ and $|\alpha|_{\tilde\omega}$ is bounded on $\tilde M$. 
\end{defn}

We shall recall in Subsection \ref{subsect:nonKnull} the notion of big and nef cohomology classes.

\begin{rem}\label{rem:khypcohomological}
These three notions depend only on the cohomology class of the considered $(1,1)$-form, for any other representative differs by an exact $1$-form on $M$, whose pull-back to $\tilde M$ is bounded in $L^\infty$ norm, since $M$ compact.
\end{rem}

Next, given a compact Kähler manifold $M$, call $\mathcal{W}_M$ the positive convex cone of all big and nef cohomology classes $[\mu]\in H^{1,1}(X,\mathbb R)$ which can be represented by a smooth form $\mu$ as in Definition \ref{def:weaklyK}. Sometimes, by a slight abuse of notation, we shall say that a smooth closed real $(1,1)$-form $\mu$ belongs to $\mathcal W_M$ if its cohomology class $[\mu]$ does. Clearly, such a $M$ is weakly Kähler hyperbolic if and only if $\mathcal W_M\ne\emptyset$.

\begin{rem}\label{rem:khyphereditary}
The property of being Kähler hyperbolic is easily seen to be inherited by all smooth closed irreducible submanifolds. More generally, the properties of being semi-K\"ahler hyperbolic and weakly Kähler hyperbolic are inherited by smooth closed irreducible submanifolds too, provided 
\begin{itemize}
\item they intersect non trivially the locus where $\mu$ is positive definite in the former case,
\item the top power of the restriction to the given subvariety of some class in $\mathcal W_M$ is strictly positive in the latter case. This is because the restriction of a nef class stays nef, and a nef class is big if and only if the integral of its top power is positive (cf. Subsection \ref{subsect:nonKnull}).
\end{itemize}
\end{rem}

Next, the above notions of Kähler hyperbolicity are preserved under finite étale coverings. More precisely we have.

\begin{prop}\label{prop:khypupdown}
Let $\nu\colon\hat M\to M$ be a finite étale cover. Then $M$ is Kähler hyperbolic (resp. weakly Kähler hyperbolic) if and only if $\hat M$ is so. 
\end{prop}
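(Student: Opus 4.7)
For the forward direction, the plan is a direct pullback argument. Set $\hat\mu:=\nu^*\mu$: this is a smooth closed real $(1,1)$-form on $\hat M$, which remains K\"ahler when $\mu$ is (since $\nu$ is \'etale) and whose class remains big and nef when $[\mu]$ is (since $\nu$ is finite and surjective, pullback preserves nefness, while the top self-intersection number is multiplied by the degree of $\nu$, preserving bigness). Because universal covers coincide, $\tilde{\hat M}=\tilde M$, the original primitive $\alpha$ serves unchanged, and the boundedness of $|\alpha|$ is insensitive to the choice of K\"ahler metric on the base up to quasi-isometry, as already noted after Definition~\ref{def:semiK}.

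For the reverse direction, the plan is first to reduce to a Galois cover and then to average. Let $\nu'\colon N\to M$ be the Galois closure of $\nu$, a finite \'etale Galois cover factoring through $\hat M$ with deck group $G$. By the forward direction just proved, (weak) K\"ahler hyperbolicity propagates from $\hat M$ to $N$, so one may assume from the start that $\nu$ itself is Galois with group $G$. Let $\hat\mu$ witness the hypothesis on $\hat M$ and let $\hat\alpha$ be the bounded primitive of $\hat\pi^*\hat\mu$ on the common universal cover $\tilde M$. Form $\bar\mu:=\frac{1}{|G|}\sum_{g\in G}g^*\hat\mu$, which is $G$-invariant and hence descends via the \'etale quotient to a smooth form $\mu$ on $M$ with $\nu^*\mu=\bar\mu$. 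Since $G$ acts by biholomorphisms, each $g^*[\hat\mu]$ is K\"ahler (resp.~big and nef), and the convex combination $[\bar\mu]$ is K\"ahler (resp.~big and nef) as well; the same then follows for $[\mu]$, as $\nu^*$ is injective on real cohomology and reflects these positivity notions under a finite \'etale surjection.

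To produce the primitive on $\tilde M$, fix a K\"ahler metric $\omega_M$ on $M$ and use $\tilde\omega:=\pi^*\omega_M$ as reference; this metric is $\Gamma$-invariant, in particular invariant under every lift $\tilde g$ of $g\in G$ to a deck transformation of $\tilde M\to M$. Set $\bar\alpha:=\frac{1}{|G|}\sum_{g\in G}\tilde g^*\hat\alpha$, picking one lift $\tilde g$ for each $g$. A direct computation using the equivariance of the covering maps shows $d\bar\alpha=\pi^*\mu$ independently of the choice of lifts, while $\Gamma$-invariance of $\tilde\omega$ gives $|\tilde g^*\hat\alpha|_{\tilde\omega}(x)=|\hat\alpha|_{\tilde\omega}(\tilde g\cdot x)$, uniformly bounded on $\tilde M$. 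I anticipate no serious obstacle; the only bookkeeping point to watch is precisely this last one --- selecting the reference metric on $\tilde M$ as a $\Gamma$-invariant lift from $M$, so that deck transformations act by isometries and the averaged primitive automatically inherits a uniform bound, independently of the ambiguity in the choice of lifts.
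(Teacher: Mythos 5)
Your proof is correct and follows essentially the same route as the paper's: the pullback argument for one direction, and for the converse the reduction to the Galois closure (the paper constructs it as the cover associated to the normal core of $\pi_1(\hat M)$ in $\pi_1(M)$) followed by averaging over the deck group $G$. The only difference is presentational: you spell out explicitly the averaging of the bounded primitive over lifts $\tilde g$ on the common universal cover and the descent of positivity of the averaged class, steps the paper's proof leaves implicit in the phrase \lq\lq descends to a form in $\mathcal W_M$\rq\rq.
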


\begin{proof}
If $M$ satisfies one of the above Kähler hyperbolicity properties, it is immediate by pulling-back via $\nu$ that $\hat M$ satisfies the same property. 

Conversely, let $H\le\pi_1(M)$ be the subgroup of finite index which corresponds to the covering $\nu\colon\hat M\to M$. The group $\pi_1(M)$ acts on the set of, say, left cosets $\pi_1(M)/H$ by permutation. Then, we have a group homomorphism $\pi_1(M)\to\mathfrak S(\pi_1(M)/H)$ from $\pi_1(M)$ to the finite group of permutations of $\pi_1(M)/H$. It is straightforward to verify that its kernel $N$ is a normal subgroup of $\pi_1(M)$ of finite index and contained in $H$. Corresponding to $N$ we thus have a finite Galois étale cover $\rho\colon\check M\to M$ which factorizes through $\hat M$. By the first part, $\check M$ satisfies the same Kähler hyperbolicity properties of $\hat M$. But now, the average under the deck transformation group of $\rho\colon\check M\to M$ of any form $\mu\in\mathcal W_{\check M}$ gives a non zero invariant form which descends to a form in $\mathcal W_M$. It is immediate to see that this form is Kähler if $\mu$ is, and we are done.
\end{proof}

The mere fact that the Kähler form becomes exact after pulling-back to the universal cover implies that the manifold has large fundamental group in the sense of Kollár \cite[\S 4.1]{Kol95}. 
We will see later on that in the weaker context of weakly Kähler hyperbolicity one can say that $M$ has generically large fundamental group (still in the sense of Kollár \textsl{loc. cit.}). 

Here is another remarkable property of the fundamental group of a weakly Kähler hyperbolic manifold.

\begin{prop}\label{prop:nonamenable}
The fundamental group of a weakly K\"ahler hyperbolic manifold $M$ is not amenable.

In particular, it has exponential growth for any finite system of generators and can be neither virtually abelian nor virtually nilpotent. 
\end{prop}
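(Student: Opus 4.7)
The plan is to argue by contradiction: assuming amenability of $\pi_1(M)$, a F\o lner--Stokes estimate will force the top self-intersection $\int_M\mu^m$ to vanish, contradicting the bigness of the big and nef class $[\mu]$, which guarantees $\int_M\mu^m=\vol([\mu])>0$.

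The key identity is $\tilde\mu^m=d(\alpha\wedge\tilde\mu^{m-1})$ on $\tilde M$, which holds because $\tilde\mu$ is closed and equal to $d\alpha$. Since $\tilde\mu$ is the pull-back of a smooth form on the compact manifold $M$, the norm $|\tilde\mu|_{\tilde\omega}$ is uniformly bounded, and combined with the hypothesis $|\alpha|_{\tilde\omega}\in L^\infty(\tilde M)$ we obtain a sup-norm bound $|\alpha\wedge\tilde\mu^{m-1}|_{\tilde\omega}\le C$. Next, choose a Dirichlet fundamental domain $D\subset\tilde M$ based at a generic point: it is relatively compact, with piecewise smooth boundary composed of finitely many walls that are pairwise identified by elements of a finite symmetric generating set $S\subset\pi_1(M)$. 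Assuming $\pi_1(M)$ is amenable, fix a F\o lner sequence $\{F_n\}$ for $(\pi_1(M),S)$ and set $\Omega_n:=\bigcup_{\gamma\in F_n}\gamma\overline D$. The boundary of $\Omega_n$ is a (essentially disjoint) union of translates of walls, one for each pair $(\gamma,s)\in F_n\times S$ such that $\gamma s\notin F_n$, so its total $(2m-1)$-measure is at most $K\cdot|\partial_SF_n|$, where $K$ bounds the $(2m-1)$-volume of a single wall and $|\partial_SF_n|=o(|F_n|)$ by the F\o lner condition. Stokes' theorem then yields
$$|F_n|\int_M\mu^m=\int_{\Omega_n}\tilde\mu^m=\int_{\partial\Omega_n}\alpha\wedge\tilde\mu^{m-1}\le CK\cdot|\partial_SF_n|,$$
and dividing by $|F_n|$ and letting $n\to\infty$ produces $\int_M\mu^m\le 0$, the desired contradiction.

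The remaining statements are classical corollaries of non-amenability: virtually abelian and virtually nilpotent groups are amenable and are thus excluded, while a finitely generated group of subexponential growth is itself amenable (F\o lner sets being provided by suitable subsequences of balls in the word metric), so non-amenability forces exponential growth with respect to any finite system of generators.

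The main obstacle I foresee is essentially technical rather than conceptual: one must check that a Dirichlet polyhedron in the universal cover of a compact Riemannian manifold has only finitely many walls, each of finite $(2m-1)$-volume, and that in the formation of $\Omega_n$ the external walls correspond bijectively to the edges of the Cayley graph joining $F_n$ to its complement. Once these bookkeeping points are settled, the F\o lner--Stokes estimate above delivers the conclusion in a uniform way.
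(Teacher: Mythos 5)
Your argument is correct, but it takes a genuinely different route from the paper. The paper's proof is a two-line reduction to a quoted result: by \cite[Proposition]{ABW92}, amenability of the deck group would force the pull-back map $H^2(M)\to H^2_\beta(\tilde M)$ into bounded de Rham cohomology to be injective, while any $\mu\in\mathcal W_M$ is a non-zero class on $M$ (as $\int_M\mu^m>0$) whose pull-back $\tilde\mu=d\alpha$ is exact with bounded primitive, hence zero in $H^2_\beta(\tilde M)$. What you do instead is essentially reprove, in the special case needed, the mechanism behind that injectivity statement: a F\o lner exhaustion $\Omega_n=\bigcup_{\gamma\in F_n}\gamma\overline D$ together with Stokes applied to $\tilde\mu^m=d\bigl(\alpha\wedge\tilde\mu^{m-1}\bigr)$, giving $|F_n|\int_M\mu^m\le CK\,|\partial_S F_n|=o(|F_n|)$ and contradicting $\int_M\mu^m>0$ (which is exactly where bigness of the nef class enters, via \cite[Theorem 0.5]{DP04}). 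Your version buys self-containedness and makes transparent the link with the linear isoperimetric inequality the paper proves immediately afterwards (your estimate is its combinatorial incarnation on F\o lner sets), at the cost of the boundary bookkeeping you flag; the paper's version buys brevity at the cost of an external citation. Concerning the technical point: rather than a Dirichlet domain, whose bisector walls need not be piecewise smooth in variable curvature, it is cleaner to use the fundamental domain built from a smooth triangulation of $M$ as in \cite[\S 3.6.1]{MM07} (already invoked in the paper), whose boundary is a finite union of smooth codimension-one faces of finite area, pairwise identified by finitely many deck transformations generating $\pi_1(M)$; with that choice the cancellation of interior faces, the bound $\mathrm{Area}(\partial\Omega_n)\le K\,|\partial_S F_n|$, and Stokes' theorem are all immediate, and the rest of your argument (including the standard deductions that subexponential growth or virtual nilpotency would imply amenability) goes through as written.
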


\begin{proof}
If the fundamental group of $M$ were amenable, then the pull-back morphism $H^2(M)\to H^2_\beta(\tilde M)$, where the $H^i_\beta(\tilde M)$'s are the de Rham cohomology groups based on differential forms $\alpha$ such that $\alpha$ and $d\alpha$ are uniformly bounded, would be injective thanks to \cite[Proposition]{ABW92}. But this is impossibile since any $\mu\in\mathcal W_M$ gives a non zero class in $H^2(M)$ which pulls-back to zero in $H^2_\beta(\tilde M)$.
\end{proof}

Weakly Kähler hyperbolic manifolds do satisfy the following linear isoperimetric inequality (cf. the proof of Corollary \ref{cor:khypkobhyp} for an analogue inequality for holomorphic discs in the stronger Kähler hyperbolic setting).

\begin{prop}
Let $M$ be a weakly Kähler hyperbolic manifold, $g$ any Riemannian metric on $M$, and consider the Riemannian universal cover $\pi\colon (\tilde M,\pi^*g)\to (M,g)$. 

Then, there exists a constant $C>0$ such that for any bounded domain with $C^1$ boundary $\Omega\subset\tilde M$ we have the following linear isoperimetric inequality
$$
\mathrm{Volume}_{\pi^*g}(\Omega) \le C\, \mathrm{Area}_{\pi^*g} ( \partial \Omega).
$$
\end{prop}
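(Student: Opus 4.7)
The plan is to deduce the linear isoperimetric inequality from the non-amenability of $\pi_1(M)$ already established in Proposition \ref{prop:nonamenable}, via the classical Brooks-type characterization of amenability for universal covers of compact Riemannian manifolds.

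First I would reduce to a single, conveniently chosen Riemannian metric $g_0$ on $M$. Since $M$ is compact, any two Riemannian metrics on $M$ are globally bi-Lipschitz equivalent, and the same is therefore true of their pullbacks to $\tilde M$ along $\pi$. The constant $C$ in the isoperimetric inequality is invariant under such comparisons up to a bounded multiplicative factor, so it suffices to prove the statement for one such $g_0$ and then rescale.

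Next I would invoke Brooks' theorem, which asserts that for the Riemannian universal cover $(\tilde M,\pi^*g_0)$ of a compact manifold, the Cheeger constant
$$
h(\tilde M) := \inf_\Omega \frac{\mathrm{Area}_{\pi^*g_0}(\partial \Omega)}{\mathrm{Volume}_{\pi^*g_0}(\Omega)}
$$
(infimum over bounded open $\Omega \subset \tilde M$ with $C^1$ boundary) is strictly positive if and only if $\pi_1(M)$ is non-amenable. Combining this with Proposition \ref{prop:nonamenable} and setting $C = 1/h(\tilde M)$ would then immediately yield the desired inequality.

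The main non-trivial ingredient is Brooks' theorem itself, which can be understood as a transfer --- through the \v{S}varc--Milnor quasi-isometry between $\tilde M$ and the Cayley graph of $\pi_1(M)$ --- of F{\o}lner's isoperimetric characterization of amenability from the discrete to the continuous setting. It is worth pointing out that a more hands-on attempt to bypass this by a direct Stokes argument based on the bounded primitive $\alpha$ of $\tilde\mu = \pi^*\mu$ would not suffice: the identity $\tilde\mu \wedge \tilde\omega^{m-1} = d(\alpha \wedge \tilde\omega^{m-1})$ only yields the controlled bound $\int_\Omega (\mathrm{tr}_{\tilde\omega}\tilde\mu)\,\tilde\omega^m = m\int_{\partial\Omega}\alpha\wedge\tilde\omega^{m-1}$, and the integrand on the left may have varying sign when $[\mu]$ is only big and nef, as opposed to pointwise semi-positive as in the K\"ahler or semi-K\"ahler hyperbolic cases. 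So the non-amenability route seems unavoidable here.
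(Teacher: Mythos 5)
Your argument is correct and not circular: Proposition \ref{prop:nonamenable} is proved independently (via the bounded-cohomology result of \cite{ABW92}), and for the universal cover of a compact Riemannian manifold positivity of the Cheeger constant is indeed equivalent to non-amenability of $\pi_1(M)$ (Brooks' theorem in its isoperimetric form, or his spectral statement combined with Buser's inequality, which applies here since $\pi^*g$ has Ricci curvature bounded below by compactness of the base); the bi-Lipschitz reduction to a fixed metric is also harmless. This is, however, a genuinely different route from the paper's, and your closing claim that a direct Stokes argument cannot work is mistaken. The paper does not attempt to bound the volume by $\int_\Omega \tilde\mu\wedge\tilde\omega^{\,n-1}$; instead it takes $\beta:=\alpha\wedge\pi^*\mu^{n-1}$, a \emph{bounded} primitive of $\pi^*\mu^n$, normalizes $g$ so that $\int_M\dvol_g=\int_M\mu^n$ (possible since $[\mu]$ big and nef gives $\int_M\mu^n>0$), and then writes $\dvol_g-\mu^n=d\eta$ with $\eta$ a smooth $(2n-1)$-form on the compact base, whose pull-back is automatically bounded. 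Hence $\pi^*\dvol_g=d(\pi^*\eta+\beta)$ has a bounded primitive and Stokes yields the inequality with an explicit constant; no pointwise positivity of $\mu^n$ or of $\operatorname{tr}_{\tilde\omega}\tilde\mu$ is required, only the cohomological identity in $H^{2n}(M,\mathbb R)$ after rescaling $g$. Comparing the two: your proof exhibits the inequality as a purely coarse, group-theoretic consequence of non-amenability (at the price of importing the Brooks/F{\o}lner machinery and a curvature-dependent constant), while the paper's proof is elementary, self-contained, and produces a constant controlled by the sup-norm of the bounded primitive; note also that the paper runs the logic in the opposite direction, using this isoperimetric inequality together with Cheeger's inequality to obtain the spectral gap of Corollary \ref{cor:zeronotspect}, whereas non-amenability itself is obtained by a separate argument.
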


\begin{proof}
Let $\mu\in\mathcal W_M$ and let $\alpha$ be a bounded real $1$-form such that $\pi^*\mu=d\alpha$. Then, the $(2n-1)$-form $\beta:=\alpha \wedge\pi^*\mu^{n-1}$ satisfies $d\beta=\pi^*\mu^n$, and it is bounded. 

Given any Riemannian metric $g$ on $M$, normalize it in such a way that
$$
\int_M\dvol_g=\int_M\mu^n.
$$
Since $\int_M\colon H^{2n}(M, \R) \to \R$ is an isomorphism, it follows that there exists a smooth $(2n-1)$-form $\eta$ on $M$ such that $\dvol_g-\mu^n=d\eta$. Therefore, $\pi^*\dvol_g= d(\pi^* \eta + \beta)$ has a bounded primitive, say 
$$
|\pi^* \eta + \beta|_{\pi^*g}\le C.
$$ 
This means that for any $v_1,\dots,v_{2n-1}\in T_{\tilde M}$, one has
$$
|(\pi^* \eta + \beta)(v_1\wedge\cdots\wedge v_{2n-1})|\le C\, |v_1\wedge\cdots\wedge v_{2n-1}|_{\pi^*g},
$$ 
and therefore, it follows using Stokes' formula that for every relatively compact domain with $C^1$-boundary $\Omega \subset \tilde{M}$ one has
$$
\mathrm{Volume}_{\pi^*g}(\Omega) \le C\, \mathrm{Area}_{\pi^*g} ( \partial \Omega).
$$ 
\end{proof}

As a corollary, we immediately get a spectral gap type result for the Laplace--Beltrami operator (with respect to \emph{any} Riemannian metric coming from the base) on the universal cover of a weakly Kählerähler hyperbolic manifold (see Theorem \ref{thm:spectralgap} for more spectral properties of weakly Kähler hyperbolic manifolds).

\begin{cor}\label{cor:zeronotspect}
Let $M$ be a weakly Kähler hyperbolic manifold, with universal cover $\pi\colon\tilde M\to M$, and let $g$ be any Riemannian metric on $M$. 
Then, zero is not in the spectrum of the Laplace--Beltrami operator of $(\tilde M, \pi^*g)$.
\end{cor}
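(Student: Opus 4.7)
My plan is to deduce the absence of $0$ from the spectrum of the Laplace--Beltrami operator on $(\tilde M,\pi^*g)$ directly from the linear isoperimetric inequality established in the preceding proposition, via Cheeger's inequality.

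First, I would recall that for a complete Riemannian manifold $(N,h)$, the Cheeger isoperimetric constant is defined as
$$
\mathrm{Ch}(N,h):=\inf_{\Omega}\frac{\mathrm{Area}_h(\partial\Omega)}{\mathrm{Volume}_h(\Omega)},
$$
where the infimum runs over all relatively compact domains $\Omega\subset N$ with $C^1$ boundary. Cheeger's classical inequality (valid for any complete Riemannian manifold, and proved in this generality by Yau and subsequent authors via the coarea formula applied to test functions) asserts that the bottom of the $L^2$-spectrum of the (non-negative) Laplace--Beltrami operator $\Delta$ is controlled from below by
$$
\inf\mathrm{Spec}(\Delta)\ge \frac{\mathrm{Ch}(N,h)^2}{4}.
$$

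Now since $M$ is compact, the pulled-back metric $\pi^*g$ on $\tilde M$ is complete, so this framework applies. The previous proposition gives a constant $C>0$ such that
$$
\mathrm{Volume}_{\pi^*g}(\Omega)\le C\,\mathrm{Area}_{\pi^*g}(\partial\Omega)
$$
for every relatively compact $C^1$-bounded domain $\Omega\subset\tilde M$. Equivalently, $\mathrm{Ch}(\tilde M,\pi^*g)\ge 1/C>0$, and hence by Cheeger's inequality
$$
\inf\mathrm{Spec}\bigl(\Delta_{\pi^*g}\bigr)\ge\frac{1}{4C^2}>0,
$$
which forces $0\notin\mathrm{Spec}(\Delta_{\pi^*g})$, as desired.

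There is no genuine obstacle here: the isoperimetric inequality was the substantive input, and Cheeger's inequality is a well-known black box. The only points to be mildly careful about are the completeness of $(\tilde M,\pi^*g)$ (automatic because $M$ is compact, so $\pi^*g$ has bounded geometry and its distance function is complete) and the fact that in the definition of $\mathrm{Ch}(\tilde M,\pi^*g)$ we do not need to restrict to domains of volume at most half the total volume, since $\tilde M$ has infinite volume whenever the covering is nontrivial; and if the covering is trivial then $M=\tilde M$ is itself weakly Kähler hyperbolic with trivial $\pi_1$, which is incompatible with Proposition \ref{prop:nonamenable}, so this degenerate case does not occur.
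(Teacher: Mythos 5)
Your proof is correct and follows essentially the same route as the paper: the linear isoperimetric inequality of the preceding proposition gives a positive lower bound on the Cheeger constant of $(\tilde M,\pi^*g)$, and Cheeger's inequality then bounds the bottom of the $L^2$-spectrum away from zero. The paper's only additional remark is a rescaling normalization of $g$ (harmless, since rescaling the metric rescales the Laplacian and does not affect whether $0$ lies in the spectrum), and your supplementary observations on completeness and the non-compactness of $\tilde M$ are accurate.
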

\begin{proof}
Replacing a Riemannian metric $g$ by a positive multiple $c\,g$, has the effect of rescaling the corresponding Laplace--Beltrami operator by $1/c$. Thus, we can suppose that $g$ is normalized as in the proposition above, since the rescaling does not change the presence of zero in the spectrum. The corollary then follows from Cheeger's inequality \cite{Che70} (see also \cite[Theorem VI.1.2]{Cha01}).  
\end{proof}

\subsubsection{Examples}
Let us mention here several examples to clarify which kind of objects we are concretely working with.

\begin{ex}[Dimension one]\label{ex:dimone}
For $X$ a compact Riemann surface, weakly Kähler hyperbolicity is equivalent to Kähler hyperbolicity, and they are in turn both equivalent to the classical hyperbolicity of $X$. 

Indeed, if $(X,\mu)$ is weakly Kähler hyperbolic, then $\int_X\mu>0$ and thus there exists a Kähler form $\omega$ cohomologous to $\mu$. Therefore, $(X,\omega)$ is Kähler hyperbolic by Remark \ref{rem:khypcohomological}. 

On the other hand, Kähler hyperbolic implies Kobayashi hyperbolic (see Corollary \ref{cor:khypkobhyp}), which in turns implies negative curvature, which give us back Kähler hyperbolicity (see next example).
\end{ex}

\begin{ex}[K\"ahler hyperbolic manifolds]
Examples of K\"ahler hyperbolic manifolds are given for instance by 
\begin{itemize}
\item compact K\"ahler manifolds homotopically equivalent to a Riemannian manifold with negative Riemannian sectional curvature \cite[Theorem 3.1 and Lemma 3.2]{CY18};
\item compact Hermitian locally symmetric spaces of non compact type \cite[Proposition 8.6 and subsequent page]{Bal06};  
\item smooth closed submanifolds of K\"ahler hyperbolic manifolds, by Remark \ref{rem:khyphereditary};
\item products of K\"ahler hyperbolic manifolds.
\item compact K\"ahler manifolds admitting a finite map to a K\"ahler manifold $Y$ carrying a K\"ahler metric with a bounded primitive on the universal covering space (cf. Lemma \ref{bounded}). This includes quotients of bounded symmetric domains by non necessarily cocompact torsion free discrete subgroups.
\item compact K\"ahler manifolds admitting a finite map to $\mathcal{A}_{g,P}$ the moduli stack of genus $g$ abelian varieties with a given polarization type $P$ or to $\mathcal{M}_{g,n}$ \cite{McM00} (see the remark below). 
\end{itemize}
\end{ex}

\begin{rem}
There is also a non compact version of K\"ahler hyperbolicity considered for instance in \cite{McM00}, where it is shown that the moduli space of Riemann surfaces is K\"ahler hyperbolic. We hope to reconsider this further notion in a future paper, especially in connection with the logarithmic version of Lang's conjecture.
\end{rem}

\begin{ex}[Semi-K\"ahler hyperbolic manifolds.]
The main examples we have in mind are given by compact K\"ahler manifolds admitting a generically finite morphism to a K\"ahler hyperbolic manifold (see Proposition \ref{prop:genfinite}). In particular, explicit examples are given by
\begin{itemize}
\item the blow-up of a K\"ahler hyperbolic manifold along a smooth center;
\item any resolution of singularities of a (singular) subvariety of a K\"ahler hyperbolic manifold.
\item compact K\"ahler manifolds admitting a generically finite map to a K\"ahler manifold $Y$ carrying a K\"ahler metric with a bounded primitive on the universal covering space (cf. again Lemma \ref{bounded}). As before, this includes quotients of bounded symmetric domains by non necessarily cocompact torsion free discrete subgroups.
\item compact K\"ahler manifolds admitting a generically finite map to $\mathcal{A}_{g,P}$ the moduli stack of genus $g$ abelian varieties with a given polarization type $P$ or to $\mathcal{M}_{g,n}$.
\end{itemize}
Observe that manifolds in the first explicit class above are semi-K\"ahler hyperbolic but \emph{never} K\"ahler hyperbolic as soon as their dimension is greater than one, since they always contain rational curves in the exceptional divisor.  
\end{ex}

One of our main results will be to show that a weakly Kähler hyperbolic manifold is of general type (and hence projective, being both Moishezon and Kähler). Hence any compact complex manifold whose Kodaira dimension is not maximal gives an example of something which is \emph{not} weakly Kähler hyperbolic.

\begin{rem}\label{rem:weaklynotsemi}
Unfortunately, we do not dispose at the moment of examples of weakly K\"ahler hyperbolic manifolds which are not semi--K\"ahler hyperbolic. These would be urgently needed, if any.
\end{rem}

Summing up, the picture is:
$$
\xymatrix{
\textrm{Kähler hyperbolic} \ar@{=>}[r]<1ex> & \textrm{semi-Kähler hyperbolic} \ar@{=>}[d] \ar@{=>}[l]<1ex>|{\textrm{NOT}} \\
& \textrm{weakly Kähler hyperbolic.} \\
}
$$

\subsection{Basics on spectral theory and Atiyah $L^2$ theory}

We start this section by recalling some basic spectral theory of the Hodge--Kodaira Laplacian. 
Let $(N,h)$ be a complex manifold of complex dimension $n$ endowed with a complete Hermitian metric $h$. We denote with $\overline{\partial}_{p,q}\colon \Omega^{p,q}(N)\rightarrow \Omega^{p,q+1}(N)$ the Dolbeault operator acting on $(p,q)$-forms  and with   
 $\overline{\partial}_{p,q}^t:\Omega^{p,q+1}(N)\rightarrow \Omega^{p,q}(N)$  its  formal adjoint with respect to the metric $h$.  The Hodge--Kodaira Laplacian acting on $(p,q)$-forms, denoted here with $\Delta_{\overline{\partial},p,q}:\Omega^{p,q}(N)\rightarrow \Omega^{p,q}(N)$, is defined as 
 $$
 \Delta_{\overline{\partial},p,q}:= \overline{\partial}_{p,q}^t\circ \overline{\partial}_{p,q}+\overline{\partial}_{p,q-1}\circ \overline{\partial}_{p,q-1}^t.
 $$ 
 Let $L^2\Omega^{p,q}(N,h)$ be the Hilbert space of $(p,q)$-forms with measurable coefficients $u$ such that 
 $$
 \langle\!\langle u,u\rangle\!\rangle_{L^2\Omega^{p,q}(N,h)}:=\int_N |u|^2_h\,\dvol_h<\infty,
 $$ 
 where $\dvol_h$ stands for the volume form corresponding to $h$ and, with a little abuse of notations, we have still used $h$ to denote the Hermitian metric induced by $h$ on $\Lambda^{p,q}(N)$. It is well known that $\Omega_c^{p,q}(N)$, the space of smooth $(p,q)$-forms with compact support, is dense in $L^2\Omega^{p,q}(N,h)$. 

We now look at 
\begin{equation}
\label{terzo}
\Delta_{\overline{\partial},p,q}\colon L^2\Omega^{p,q}(N,h)\rightarrow L^2\Omega^{p,q}(N,h)
\end{equation}
as an unbounded, closable and densely defined operator defined on $\Omega^{p,q}_c(N)$. A  well known result says that \eqref{terzo} is essentially self-adjoint, see e.g. \cite[Prop. 12.2]{BDIP02}.  Since \eqref{terzo} is formally self-adjoint this is equivalent to saying that \eqref{terzo} admits a unique closed extension. Henceforth with a little abuse of notation we still denote with
\begin{equation}
\label{sesto}
\Delta_{\overline{\partial},p,q}\colon L^2\Omega^{p,q}(N,h)\rightarrow L^2\Omega^{p,q}(N,h)
\end{equation}
the unique closed (and hence self-adjoint) extension of \eqref{terzo}.  
Since $(N,h)$ is complete \eqref{sesto} is nothing but
 $$
 \overline{\partial}_{p,q}^*\circ \overline{\partial}_{p,q}+\overline{\partial}_{p,q-1}\circ \overline{\partial}_{p,q-1}^*\colon L^2\Omega^{p,q}(N,h)\to L^2\Omega^{p,q}(N,h).
 $$ 
Here 
$$
\overline{\partial}_{p,q}\colon L^2\Omega^{p,q}(N,h)\to L^2\Omega^{p,q+1}(N,h)
$$ 
and 
$$
\overline{\partial}_{p,q-1}\colon L^2\Omega^{p,q-1}(N,h)\to L^2\Omega^{p,q}(N,h)
$$ 
are the unique closed extensions of 
$$
\overline{\partial}_{p,q}\colon\Omega_c^{p,q}(N)\to \Omega_c^{p,q+1}(N),
$$
and 
$$
\overline{\partial}_{p,q-1}\colon\Omega_c^{p,q-1}(N)\rightarrow \Omega_c^{p,q}(N)
$$ 
respectively, and  
$$
\overline{\partial}_{p,q}^*\colon L^2\Omega^{p,q+1}(N,h)\to L^2\Omega^{p,q}(N,h)
$$ 
and 
$$
\overline{\partial}_{p,q-1}^*\colon L^2\Omega^{p,q}(N,h)\to L^2\Omega^{p,q-1}(N,h)
$$  
are the corresponding adjoints. Note that since the manifold is assumed to be complete, $\overline{\partial}_{p,q}^*$ is the unique closed extension of the formal adjoint $\overline{\partial}_{p,q}^t$. It follows immediately that in $L^2\Omega^{p,q}(N,h)$ we have
$$
\ker(\Delta_{\overline{\partial},p,q})=\ker(\overline{\partial}_{p,q})\cap \ker(\overline{\partial}_{p,q-1}^*),\quad \overline{\operatorname{im}(\Delta_{\overline{\partial},p,q})}=\overline{\operatorname{im}(\overline{\partial}_{p,q-1})}\oplus \overline{\operatorname{im}(\overline{\partial}^*_{p,q})},
$$
and the following $L^2$-Hodge--Kodaira orthogonal decomposition holds true
$$
L^2\Omega^{p,q}(N,h)\simeq\ker(\Delta_{\overline{\partial},p,q})\oplus\overline{\operatorname{im}(\overline{\partial}_{p,q-1})}\oplus \overline{\operatorname{im}(\overline{\partial}^*_{p,q})}.
$$

Now we continue this section by recalling the definition and the basic properties of the $L^2$-Hodge numbers. We introduce only what is strictly necessary for our scopes with no goal of completeness.  We invite the reader to consult the seminal paper of Atiyah \cite{Ati76} for more on this. Moreover, for an in-depth treatment of this topic we refer to \cite{Luc02} whereas quick introductions can be found in \cite{Kol95}, \cite{MM07} and \cite{Roe98}. 

Let $(M,h)$ be a compact Hermitian manifold of complex dimension $m$. Let $\pi\colon \tilde{M}\rightarrow M$ be a Galois covering of $M$ and let $\tilde{h}:=\pi^*h$ be the pull-back metric on $\tilde{M}$. Let $\Gamma$ be the group of deck transformations acting on $\tilde{M}$, $\Gamma\times \tilde{M}\rightarrow \tilde{M}$. We recall that  $\Gamma$ is a discrete group acting fiberwise on $\tilde{M}$ and such that the action is transitive on each fiber and properly discontinuous. In particular we have $\tilde{M}/\Gamma=M$. 
An open subset $A\subset \tilde{M}$ is a  fundamental domain of the action of $\Gamma$ on $\tilde{M}$ if
\begin{itemize}
\item $\tilde{M}=\bigcup_{\gamma\in \Gamma}\gamma(\overline{A})$, 
\item $\gamma_1(A)\cap\gamma_2(A)=\emptyset$ for every $\gamma_1, \gamma_2\in \Gamma$ with $\gamma_1\neq \gamma_2$,
\item $\overline{A}\setminus A$ has zero measure.
\end{itemize}   
It is not difficult to show that, in the sense of tensor product of Hilbert spaces, it holds
$$
L^2\Omega^{p,q}(\tilde{M},\tilde{h})\simeq L^2\Gamma \otimes L^2\Omega^{p,q}(A,\tilde{h}|_A)\cong L^2\Gamma\otimes L^2\Omega^{p,q}(M,h).
$$ 
Moreover, it is clear that $\Gamma$ acts on $L^2\Omega^{p,q}(\tilde{M},\tilde{h})$ by isometries. Let us consider a $\Gamma$-module $V\subset L^2\Omega^{p,q}(\tilde{M},\tilde{h})$, that is a closed subspace of $L^2\Omega^{p,q}(\tilde{M},\tilde{h})$ which is invariant under the action of $\Gamma$. If $\{\eta_j\}_{j\in \mathbb{N}}$ is an orthonormal basis for $V$, then the following (possibly infinite) real number
$$
\sum_{j\in \mathbb{N}}\int_A|\eta_j|^2_{\tilde{h}}\,\dvol_{\tilde{h}}
$$ 
is well-defined, that is it does depend neither upon the choice of the orthonormal basis of $V$ nor on the choice of the fundamental domain of the action of $\Gamma$ on $\tilde{M}$, see \textsl{e.g.} \cite{Ati76}. The von Neumann dimension of a $\Gamma$-module $V$ is thus defined as 
$$
\dim_{\Gamma}(V):=\sum_{j\in \mathbb{N}}\int_A |\eta_j|^2_{\tilde{h}}\,\dvol_{\tilde{h}},
$$ 
where $\{\eta_j\}_{j\in \mathbb{N}}$ is any orthonormal basis for $V$ and $A$ is any fundamental domain for the action of $\Gamma$ on $\tilde{M}$.  

Since the Hodge--Kodaira Laplacian $\Delta_{\overline{\partial},p,q}\colon L^2\Omega^{p,q}(\tilde{M},\tilde{h})\rightarrow L^2\Omega^{p,q}(\tilde{M},\tilde{h})$ commutes with the action of $\Gamma$, a natural and important example of $\Gamma$-module with \emph{finite} $\Gamma$-dimension is provided by 
$\ker\bigl(\Delta_{\overline{\partial},p,q}\bigr)$, the space of $L^2$-harmonic $(p,q)$-forms on $(\tilde{M},\tilde{h})$, for each $p,q=0,\dots,m$, cf. \cite{Ati76}.  The $L^2$-Hodge numbers of  $(M,h)$ with respect to a Galois $\Gamma$-covering $\pi\colon \tilde{M}\rightarrow M$ endowed with the pull-back metric $\tilde{h}:=\pi^*h$ are then defined as 
$$
h^{p,q}_{(2),\Gamma,\overline{\partial}}(M):=\dim_{\Gamma}\bigl(\ker(\Delta_{\overline{\partial},p,q})\bigr).
$$
We point out that $h^{p,q}_{(2),\Gamma,\overline{\partial}}(M)$ are nonnegative real numbers independent on the choice of the Hermitian metric $h$ on $M$. We recall now a deep result which is nothing but the celebrated Atiyah $L^2$-index theorem applied to a compact complex manifold (see \cite{Ati76}). 
\begin{thm}
\label{thm:gammaHodge}
Let $(M,h)$ be a compact complex Hermitian manifold of complex dimension $m$. Let $\pi:\tilde{M}\rightarrow M$ be a Galois $\Gamma$-covering of $M$ and let $\tilde{h}:=\pi^{*}h$ be the pull-back metric on $\tilde{M}$. Then for any fixed $p\in \{0,...,m\}$ we have 
$$
\sum_{q=0}^m(-1)^q\,h_{\overline{\partial}}^{p,q}(M)=\sum_{q=0}^m(-1)^q\,h_{(2),\Gamma,\overline{\partial}}^{p,q}(M).
$$
\end{thm}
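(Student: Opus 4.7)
The plan is to realize both sides as the index of a single rolled-up Dolbeault operator, interpreted once as an ordinary Fredholm index downstairs on $M$ and once as a von Neumann $\Gamma$-equivariant index upstairs on $\tilde M$, and then to invoke Atiyah's $L^2$-index theorem to identify them.

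Fix $p \in \{0,\dots,m\}$ and roll up the Dolbeault complex $(\Omega^{p,\bullet}(M),\overline{\partial})$ into the first order elliptic operator
$$
D_p := \overline{\partial} + \overline{\partial}^t \colon \bigoplus_{q} \Omega^{p,2q}(M) \longrightarrow \bigoplus_{q} \Omega^{p,2q+1}(M).
$$
On the compact manifold $M$, classical Hodge theory identifies $\ker(D_p)$ and $\ker(D_p^*)$ with the direct sums of $\overline{\partial}$-cohomology groups in even, respectively odd, antiholomorphic degree, and therefore
$$
\mathrm{ind}(D_p) = \sum_{q=0}^{m} (-1)^q \, h^{p,q}_{\overline{\partial}}(M).
$$

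Next, I would lift the construction via $\pi$. Since $(\tilde M, \tilde h)$ is complete --- as $\Gamma$ acts cocompactly and $(M,h)$ is compact --- the Hodge--Kodaira Laplacian $\Delta_{\overline{\partial},p,q}$ on $\tilde M$ is essentially self-adjoint and satisfies the $L^2$-Hodge--Kodaira orthogonal decomposition recalled at the beginning of this subsection. The $\Gamma$-invariant lift $\tilde D_p$ is again elliptic, and the decomposition gives
$$
\ker(\tilde D_p) = \bigoplus_{q} \ker\bigl(\Delta_{\overline{\partial},p,2q}\bigr), \qquad \ker(\tilde D_p^{*}) = \bigoplus_{q} \ker\bigl(\Delta_{\overline{\partial},p,2q+1}\bigr),
$$
each summand being a $\Gamma$-module of finite $\Gamma$-dimension, as recalled above. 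Consequently
$$
\mathrm{ind}_{\Gamma}(\tilde D_p) = \sum_{q=0}^{m} (-1)^q \, h^{p,q}_{(2),\Gamma,\overline{\partial}}(M).
$$

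The identity is then closed by Atiyah's $L^2$-index theorem \cite{Ati76}, which asserts $\mathrm{ind}(D_p) = \mathrm{ind}_\Gamma(\tilde D_p)$ for any elliptic operator on $M$ lifted $\Gamma$-equivariantly to a Galois covering. The actual substance of the statement lies entirely in Atiyah's theorem, whose proof runs through a heat kernel / McKean--Singer comparison exploiting the $\Gamma$-invariance and locality of the heat kernel of $\tilde D_p$; for the present purposes we use it as a black box. The only non-routine point on our side is thus checking that the completeness of $(\tilde M,\tilde h)$ allows the identification of $L^2$-harmonic forms with the $\Gamma$-kernels of the rolled-up operator, which is exactly what was recorded in the preceding subsection.
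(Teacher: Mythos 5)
Your proposal is correct and follows exactly the route the paper intends: the paper offers no independent argument, presenting the statement as \lq\lq nothing but\rq\rq{} Atiyah's $L^2$-index theorem applied to the rolled-up Dolbeault operator $\overline{\partial}+\overline{\partial}^t$ in fixed holomorphic degree $p$, with the two indices identified with the alternating sums of ordinary and $L^2$-Hodge numbers via Hodge theory on $M$ and the $L^2$-Hodge--Kodaira decomposition on the complete cover $\tilde M$. Your spelled-out verification of these identifications, with Atiyah's theorem used as a black box, is precisely the argument the citation to \cite{Ati76} encapsulates.
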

Finally we conclude this section with a remark about the notation. In the case of a compact K\"ahler manifold $(M,h)$ we will simply denote with $h^{p,q}_{(2)}(M)$ the $L^2$-Hodge numbers of $M$ with respect to its universal covering.

\subsection{Kobayashi hyperbolicity, manifolds of general type and Lang's conjecture}
We refer the reader to \cite{Kob98} for a omni-comprehensive account of Kobayashi hyperbolicity. Let $X$ be a complex manifold, or more generally a complex space. One can define a pseudodistance $d_X$ on $X$ as the largest among those pseudodistances $d$ with the property $\rho(a,b)\ge d(f(a),f(b))$, where $\rho$ is the Poincaré distance on the complex unit disc $\mathbb D$, for all holomorphic map $f\colon\mathbb D\to X$, and all $a,b\in\mathbb D$. 

\begin{defn}
The pseudodistance $d_X$ is called \emph{Kobayashi pseudodistance}, and the manifold $X$ is said to be \emph{Kobayashi hyperbolic} if $d_X$ is a genuine distance, \textsl{i.e.} it separates points. 

More generally, given a closed (in the Euclidean topology) subset $F\subseteq X$, we say that $X$ is \emph{Kobayashi hyperbolic modulo $F$} if for every pair of distinct points $p,q$ of $X$ we have $d_X(p,q)>0$ unless both are contained in $F$.
\end{defn}

From the very definition, we immediately see that being Kobayashi hyperbolic is an hereditary property for complex subspaces of a Kobayashi hyperbolic complex space. When $X$ is moreover compact, Kobayashi hyperbolicity turns out to be equivalent to Brody hyperbolicity, namely $X$ is Kobayashi hyperbolic if and only if it does not admit any entire curve, \textsl{i.e.} a \emph{non constant} holomorphic map $f\colon\mathbb C\to X$. More generally, if $X$ is hyperbolic modulo $F$, then any entire curve must land into $F$; conversely, if $X$ is compact then for each $x\in X$ we have that 
$$
F_x:=\{y\in X\mid d_X(x,y)=0\}
$$ 
is a connected set and, if $F_x\ne\{x\}$, then there exists an entire curve whose image is inside $F_x$.

When $X$ is a smooth projective manifold (or, more generally, a compact Kähler manifold) the analytic property of being Kobayashi hyperbolic is expected to be completely characterized by the following algebraic property: $X$ is of general type together with all of its subvarieties. This is the content of Lang's conjecture \cite[Conjecture 5.6]{Lan86}. 

Recall that a smooth compact complex manifold $X$ is said to be of \emph{general type} if its canonical bundle $K_X$ is \emph{big}, \textsl{i.e.} the growth of the dimension of the space of pluricanonical sections is the largest possible $h^0(X,K_X^{\otimes m})\sim m^{\dim X}$. When $X$ is singular, by definition, it is of general type if it admits a desingularization of general type. 

The direction of Lang's conjecture which state that Kobayashi hyperbolic manifolds are of general type together with their subvarieties has been verified in some particular cases besides the trivial case of curves, among which we mention
\begin{enumerate}
\item compact K\"ahler surfaces (this follows from bimeromorphic classification, plus the non hyperbolicity of K3's),
\item compact K\"ahler manifolds with negative holomorphic sectional curvature \cite{WY16a,WY16b,TY17,Gue22},
\item compact, free quotients of bounded domains $\Omega\subset\C^n$ \cite{BD21,CDG20,BKT13},
\item generic projective hypersurfaces of high degree, and consequently generic projective complete intersections of high multi-degree \cite{Bro17,McQ99,DEG00,DT10,Siu04,Siu15,RY22,Cle86,Ein88,Xu94,Voi96,Pac04}.
\item Consider a compact K\"ahler manifold $X$ whose fundamental group has a semi-simple representation $\rho\colon\pi_1(X) \to\operatorname{GL}(N,\C)$ such that, for all positive dimensional irreducible subvarieties $Z\subset X$ (including $X$ itself), $\rho\bigl(\pi_1(\hat Z)\bigr)$ is an infinite group with a semi-simple real Zariski closure, where $\hat Z$ is a desingularization of $Z$. 

It follows from the ideas of \cite{Zuo96} (see \cite{Yam10} and \cite[Proposition 3.14]{CCE15}) that in this situation we can infer that $X$ is Kobayashi hyperbolic and all its subvarieties are of general type. In the case $\rho$ underlies a $\C$-VHS, these real Zariski closures are automatically semi-simple.
\end{enumerate}

The other direction can be seen as a consequence of \cite[Conjecture 5.5]{Lan86}, which Lang attributes to Green and Griffiths, which states that in a projective manifold of general type the Zariski closure of the image on an entire curve must be a proper subvariety. There is also a stronger variant of this conjecture, which predicts that in a projective manifold of general type there should exists a proper subvariety containing the image of every entire curve. Even stronger, one might ask \cite[Conjecture 4, p. 80]{Kob98} if a projective manifold of general type is hyperbolic modulo a proper algebraic subvariety, which should be the union of its subvarieties which are \emph{not} of general type. We shall see some results in this directions in Section \ref{subsect:Lang}, especially in connection with null and non-Kähler loci (cf. Subsection \ref{subsect:nonKnull}).

Next, let us mention the following beautiful characterization of Kobayashi hyperbolic compact complex manifolds.

\begin{thm}[\cite{Duv08}]
A compact complex manifold is Kobayashi hyperbolic if and only if any \lq\lq holomorphic disc\rq\rq{} in $X$ satisfies a linear isoperimetric inequality\footnote{As also mentioned in J. Duval's paper, an analogous statement was also independently proved by Bruce Kleiner in an unpublished work.} . 
\end{thm}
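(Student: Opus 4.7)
The plan is to prove both directions by combining a Cauchy--Schwarz/area--length differential inequality with Brody's reparametrization lemma. Fix an auxiliary Hermitian form $\omega$ on $X$; for a holomorphic disc $f\colon\overline{\mathbb D}\to X$ that is $C^1$ up to the boundary, set $A(f):=\int_{\mathbb D}f^*\omega$ and $L(f):=\int_{\partial\mathbb D}|f'|_\omega\,d\theta$; the linear isoperimetric inequality (LII) is the existence of $C>0$ with $A(f)\le CL(f)$ for all such $f$. The key tool is: for any holomorphic $f$ and any $r$ in its domain, writing $A(r):=\int_{\mathbb D_r}f^*\omega$ and $L(r):=\int_{\partial\mathbb D_r}|f'|_\omega\,ds$, polar coordinates give $A'(r)=r\int_0^{2\pi}|f'(re^{i\theta})|^2\,d\theta$ and $L(r)=r\int_0^{2\pi}|f'(re^{i\theta})|\,d\theta$, whence Cauchy--Schwarz yields
\[
L(r)^2\le 2\pi r\,A'(r).
\]

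\emph{Direction $(\Leftarrow)$: LII implies Kobayashi hyperbolicity.} Since $X$ is compact, by Brody's theorem it suffices to exclude entire curves. Given $f\colon\mathbb C\to X$ non-constant, apply LII to each rescaled disc $z\mapsto f(rz)$ (rescaling the domain preserves both $A$ and $L$) to obtain $A(r)\le CL(r)$ for every $r>0$. Combined with the inequality above, $A(r)^2\le 2\pi C^2 rA'(r)$, equivalently $\bigl(-1/A\bigr)'(r)\ge (2\pi C^2 r)^{-1}$. Picking $r_0>0$ with $A(r_0)>0$ and integrating to $R\to\infty$ gives
\[
\frac{1}{A(r_0)}\ge \frac{1}{A(r_0)}-\frac{1}{A(R)}\ge \frac{\log(R/r_0)}{2\pi C^2}\longrightarrow \infty,
\]
the desired contradiction.

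\emph{Direction $(\Rightarrow)$: Kobayashi hyperbolicity implies LII.} I argue contrapositively: from a sequence $f_n\colon\overline{\mathbb D}\to X$ with $A(f_n)/L(f_n)\to\infty$ I will produce a non-constant entire curve in $X$, contradicting Brody (hence Kobayashi) hyperbolicity. The strategy is a Brody-type rescaling at the (interior) point $z_n\in\mathbb D$ where the hyperbolic derivative $\phi_n(z):=|f_n'(z)|_\omega(1-|z|^2)$ attains its maximum $\lambda_n$: composing $f_n$ with a M\"obius transformation sending $0$ to $z_n$ and then rescaling by $1/\lambda_n$, one gets maps $g_n$ defined on discs of radius $\lambda_n$, with $|g_n'(0)|_\omega=1$ and $|g_n'|_\omega\lesssim 1$ on compact subsets (Brody's reparametrization lemma). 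Provided $\lambda_n\to\infty$, an Arzel\`a--Ascoli extraction yields, along a subsequence, an entire curve $g\colon\mathbb C\to X$ with $|g'(0)|_\omega=1$, i.e.~a non-constant Brody curve.

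The main obstacle is the quantitative link $A(f_n)/L(f_n)\to\infty \Rightarrow \lambda_n\to\infty$. The naive bound $|f_n'|_\omega\le \lambda_n/(1-|z|^2)$ is singular at the boundary, so integrating it against the Euclidean area form gives divergent estimates and does not directly control $A(f_n)$. The fix is to exploit again the Cauchy--Schwarz inequality $L(r)^2\le 2\pi rA'(r)$ together with the subharmonicity of $|f_n'|^2_\omega$ (which makes $r\mapsto L(f_n|_{\partial\mathbb D_r})/r$ non-decreasing), run the differential inequality argument of the first direction at every radius $r\in(0,1)$, and carefully pass to the limit $r\uparrow 1$ using the $C^1$ extension of $f_n$ to $\overline{\mathbb D}$. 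We refer to \cite{Duv08} for the precise implementation of this reparametrization.
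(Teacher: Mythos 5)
This theorem is not proved in the paper at all: it is quoted from Duval's work \cite{Duv08}, so the only thing to check is whether your argument would stand on its own. Your direction $(\Leftarrow)$ does: the length--area inequality $L(r)^2\le 2\pi r A'(r)$ combined with $A(r)\le C L(r)$ and integration of $(-1/A)'\ge (2\pi C^2 r)^{-1}$ is the classical and correct way to exclude entire curves, and it is in the same spirit as the way the paper itself uses the isoperimetric inequality (Stokes plus length--area) in Corollary \ref{cor:khypkobhyp}.

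The direction $(\Rightarrow)$, however, contains a genuine gap, and it is exactly the hard content of Duval's theorem. Your plan is to deduce from $A(f_n)/L(f_n)\to\infty$ that the maxima $\lambda_n$ of the hyperbolic derivatives blow up, and then run Brody reparametrization. But this implication cannot hold by the route you sketch: if $X$ is Kobayashi hyperbolic and compact, then by Brody's theorem the infinitesimal Kobayashi metric dominates a fixed multiple of $\omega$, so \emph{every} holomorphic disc satisfies $|f'(z)|_\omega\le C'/(1-|z|^2)$ uniformly; hence $\lambda_n$ stays bounded no matter how badly the isoperimetric inequality fails, and no non-constant Brody limit can be extracted this way. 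In other words, a uniform bound on the hyperbolic derivative does not by itself control $A(f)/L(f)$ (the integral $\int_{\mathbb D}(1-|z|^2)^{-2}$ diverges), so the step ``$A/L\to\infty\Rightarrow\lambda_n\to\infty$'' is not a technical obstacle to be patched by Cauchy--Schwarz, it is equivalent to the theorem itself. The auxiliary claim you invoke is also unjustified: for a general Hermitian target $|f'|^2_\omega$ need not be subharmonic (curvature terms enter), and even subharmonicity of $|f'|^2_\omega$ would not give monotonicity of $r\mapsto L(r)/r$, which involves $|f'|_\omega$ rather than its square. Duval's actual argument replaces Brody's pointwise renormalization by a renormalization adapted to areas: using the length--area method he cuts discs with small isoperimetric ratio along suitable circles to produce sub-discs of definite area and arbitrarily small boundary length, and then assembles an entire curve from these by an iterative/diagonal limiting construction. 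Since your text explicitly defers ``the precise implementation'' of this step to \cite{Duv08}, the hard implication remains unproved in your proposal.
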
 
This means that there exists a real constant $C>0$ such that given any holomorphic map $f\colon\mathbb D\to X$ which is say continuously differentiable up to the boundary, we have
$$
\operatorname{Area}(\mathbb D)\le C\,\operatorname{Length} (\partial\mathbb D),
$$
with respect to some (hence any, by compactness, by properly modifying the constant $C$) Hermitian metric on $X$.

Using this theorem, it is almost immediate to prove Kobayashi hyperbolicity of Kähler hyperbolic manifolds.

\begin{cor}[Cf. \cite{Gro91,CY18}]\label{cor:khypkobhyp}
Let $X$ be a Kähler hyperbolic manifold. Then, $X$ is Kobayashi hyperbolic.
\end{cor}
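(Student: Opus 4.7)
The plan is to verify the linear isoperimetric inequality of Duval's theorem directly, using the bounded primitive on the universal cover. Let $(X,\omega)$ be K\"ahler hyperbolic, fix a bounded $1$-form $\alpha$ on $\tilde X$ with $\pi^*\omega = d\alpha$, and set $C := \|\alpha\|_{L^\infty(\tilde X,\pi^*\omega)}$. Consider an arbitrary holomorphic map $f\colon\mathbb D\to X$ which is continuously differentiable up to the boundary.

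First, because $\mathbb D$ is simply connected, the map $f$ lifts to a holomorphic map $\tilde f\colon\mathbb D\to\tilde X$ with $\pi\circ\tilde f = f$, still continuously differentiable up to $\partial\mathbb D$. Next, the area of the disc measured with respect to $\omega$ is
$$
\operatorname{Area}_\omega(f(\mathbb D))=\int_{\mathbb D} f^*\omega=\int_{\mathbb D}\tilde f^*\pi^*\omega=\int_{\mathbb D}\tilde f^*d\alpha=\int_{\mathbb D}d(\tilde f^*\alpha).
$$
By Stokes' theorem and boundedness of $\alpha$, this is at most
$$
\int_{\partial\mathbb D}\tilde f^*\alpha\le C\,\operatorname{Length}_{\pi^*\omega}(\tilde f(\partial\mathbb D))=C\,\operatorname{Length}_{\omega}(f(\partial\mathbb D)),
$$
which is precisely the linear isoperimetric inequality with uniform constant $C$ independent of $f$.

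Since this inequality holds for every holomorphic disc in $X$ (with respect to the fixed Hermitian metric induced by $\omega$), Duval's characterization recalled just above yields that $X$ is Kobayashi hyperbolic. There is no real obstacle here: the entire argument reduces to lifting the disc to the universal cover and applying Stokes, with the boundedness of the primitive providing exactly the uniform constant that Duval's criterion requires. The same computation, applied to arbitrary bounded domains in $\tilde X$ instead of lifted discs, is the one already carried out in the proof of the linear isoperimetric inequality for weakly K\"ahler hyperbolic manifolds; here the simply-connectedness of $\mathbb D$ allows us to bypass any discussion of the metric on $X$ itself.
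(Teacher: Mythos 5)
Your proof is correct and follows essentially the same route as the paper's: lift the disc to the universal cover, apply Stokes' theorem to the bounded primitive $\alpha$ to obtain the linear isoperimetric inequality with uniform constant $C=\|\alpha\|_{L^\infty}$, and conclude via Duval's characterization. The only cosmetic remark is that the passage from $\int_{\mathbb D}d(\tilde f^*\alpha)$ to $\int_{\partial\mathbb D}\tilde f^*\alpha$ is an equality (Stokes), with the inequality entering only afterwards via the pointwise bound $|\tilde f^*\alpha(v)|\le C\,|f_*(v)|_\omega$.
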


\begin{proof}
Let $f\colon\mathbb D\to X$ be a holomorphic disc as above and $\omega$ a Kähler form such that $\pi^*\omega=d\alpha$, where $\pi\colon\tilde X\to X$ is the universal cover and $|\alpha|_{\pi^*\omega}\le C$. Take any holomorphic lifting $\tilde f\colon\mathbb D\to\tilde X$ and observe that 
$$
\begin{aligned}
\operatorname{Area}_\omega(\mathbb D) & =\int_\mathbb D f^*\omega = \int_\mathbb D (\pi\circ\tilde f)^*\omega \\
&=\int_\mathbb D\tilde f^*d\alpha =\int_{\partial\mathbb D}\tilde f^*\alpha \\
&\le C\,\operatorname{Length}_\omega (\partial\mathbb D),
\end{aligned}
$$
since $||\alpha||_{\pi^*\omega}<C$ implies precisely that for any $v\in T_{\overline{\mathbb D}}$ one has
$$
|\tilde f^*\alpha (v)|\le C\, |\tilde f_*(v)|_{\pi^*\omega}=C\, |\pi_*\bigl(\tilde f_*(v)\bigr)|_\omega=C\, |f_*(v)|_\omega.
$$
\end{proof}

Lang's conjecture would then predict that a Kähler hyperbolic manifold $X$ should be of general type together with all of its subvarieties. One of the main theorems in \cite{Gro91} asserts that a Kähler hyperbolic manifold is indeed of general type. In particular $X$ is Moishezon and Kähler so that it is projective algebraic by Moishezon's theorem. But then, since $X$ cannot posses any rational curve, \textsl{e.g.} by Kähler hyperbolicity, its canonical bundle $K_X$ is ample (\cite[Exercise 8 p. 219]{Deb01}, see also \cite{CY18}). Also, all its closed \emph{smooth} submanifolds are of general type, being again Kähler hyperbolic (cf. Remark \ref{rem:khyphereditary}), and with ample canonical bundle. 

As announced in the introduction, in Section \ref{subsect:Lang} we shall see that also \emph{singular} subvarieties of a Kähler hyperbolic manifold are of general type, thus establishing Lang's conjecture for this class of Kobayashi hyperbolic manifolds, answering a question raised in \cite{BD21}.

\subsection{Ahlfors' currents}
Morally speaking, Ahlfors' currents provide a way to perform intersection theory with certain non compact objects, namely entire curves on compact Kähler manifolds. 

Let $f\colon\mathbb C\to (X,\omega)$ be such an entire curve, $\dim X=n$. For any smooth $(1,1)$-form $\psi\in \Omega^{1,1}(X)$, let
$$
T_{f,r}(\psi)=\int_0^r \frac{dt}t\int_{\mathbb D_t}f^*\psi,
$$
where $\mathbb D_t\subset\mathbb C$ is the disc of radius $t$. For $r>0$, consider the positive current $\Phi_r$ of bidimension $(1,1)$ given by
$$
\psi\mapsto\Phi_r(\psi)=\frac{T_{f,r}(\psi)}{T_{f,r}(\omega)}.
$$
The family $\{\Phi_r\}_{r>0}$ is bounded, and so we certainly have an accumulation point $\Phi$ which is a positive current. One important fact is that we can always choose such a limit current (which is by no means unique) to be in addition closed (see for instance \cite[Lemme 0]{Bru99}). One can see that this amounts to proving that
$$
\liminf_{r\to +\infty}\frac{S_{f,r}(\omega)}{T_{f,r}(\omega)}=0,
$$
where 
$$
S_{f,r}(\omega)=\int_0^r L_\omega(t)\frac{dt}{t},\quad L_\omega(t)=\operatorname{Lenght}_\omega\bigl(\partial\mathbb D_t\bigr).
$$

We shall call an increasing diverging sequence $\{r_k\}_{k\in\mathbb N}$ such that $\Phi_{r_k}$ converges to a closed limit an \emph{admissible sequence}. For such a sequence we certainly have $\lim_{r_k\to +\infty}S_{f,r_k}(\omega)/T_{f,r_k}(\omega)=0$. A closed positive current which arises in this way is called an \emph{Ahlfors current} associated to $f\colon\mathbb C\to (X,\omega)$. It can be thought as a \lq\lq current of integration along $f(\mathbb C)$\rq\rq{}.

Given an Ahlfors current $\Phi$, its cohomology class $[\Phi]\in H^{n-1,n-1}(X,\mathbb R)$ satisfies a basic intersection inequality which will be crucial for our applications.

\begin{prop}{\cite[Lemme 1]{Bru99}}\label{prop:ahlfors}
Let $Y\subset X$ be an irreducible closed analytic hypersurface such that $f(\mathbb C)\not\subset Y$, and let $[Y]\in H^{1,1}(X,\mathbb R)$ its Poincaré dual cohomology class. Then,
$$
[\Phi]\cdot [Y]\ge 0.
$$
\end{prop}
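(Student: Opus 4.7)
The plan is to reduce the statement to the First Main Theorem of Nevanlinna theory. Since $\Phi$ is closed, the intersection number $[\Phi]\cdot[Y]$ is equal to $\Phi(\theta)$ for \emph{any} smooth representative $\theta\in[Y]$; the point is to choose $\theta$ cleverly and to express the quantities $T_{f,r_k}(\theta)$ computing $\Phi(\theta)$ as a nonnegative quantity up to an additive $O(1)$, which becomes harmless once we divide by $T_{f,r_k}(\omega)\to +\infty$.

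Concretely, I would first pick a smooth Hermitian metric $h$ on the line bundle $\mathcal{O}_X(Y)$ and let $\theta$ be its Chern curvature form, a smooth representative of the Poincar\'e dual class $[Y]$. Denoting by $s\in H^0(X,\mathcal{O}(Y))$ the canonical section with zero divisor $Y$, I would rescale $h$ so that $|s|_h\le 1$ on $X$. The Lelong--Poincar\'e formula then reads, as currents on $X$,
$$
[Y]=\theta+dd^c\log|s|_h^2.
$$
Pulling back by $f$, and using the hypothesis $f(\mathbb C)\not\subset Y$ (so that $f^*s\not\equiv 0$ and the pullback makes sense), one obtains on $\mathbb C$
$$
[f^*Y]=f^*\theta+dd^c\log|s\circ f|_h^2,
$$
with $[f^*Y]$ the positive zero-divisor of $f^*s$.

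Next, I would integrate this identity over the disc $\mathbb D_t$, multiply by $dt/t$, and integrate from (a small) $\varepsilon$ to $r$, letting $\varepsilon\to 0$. A standard Jensen/Green computation yields the First Main Theorem
$$
T_{f,r}(\theta)=N_f(r,Y)+m_f(r,Y)+O(1),
$$
where $N_f(r,Y)=\int_0^r n_f(t,Y)\,dt/t\ge 0$ counts (with multiplicities) the intersection points of $f(\mathbb D_t)$ with $Y$, and
$$
m_f(r,Y)=\int_0^{2\pi}\log\frac{1}{|s\circ f|_h(re^{i\theta})}\,\frac{d\theta}{2\pi}\ge 0
$$
is the proximity function, whose nonnegativity comes precisely from the normalization $|s|_h\le 1$. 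The additive constant arises from the value of $\log|s\circ f|_h$ at the origin (or near it, if $f(0)\in Y$, where one uses a blow-up of the measure to absorb the singularity into $N_f$).

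Finally, along an admissible sequence $r_k\to +\infty$ for which $\Phi_{r_k}\to\Phi$, I would divide the FMT identity by $T_{f,r_k}(\omega)$. Since $f$ is a (non-constant) entire curve and $\omega$ is a K\"ahler form, $T_{f,r_k}(\omega)\to +\infty$, so the $O(1)$ term disappears in the limit, while the nonnegativity of $N_f$ and $m_f$ gives
$$
[\Phi]\cdot[Y]=\Phi(\theta)=\lim_{r_k\to+\infty}\frac{T_{f,r_k}(\theta)}{T_{f,r_k}(\omega)}\ge 0.
$$
The only subtle point to control is the handling of Jensen's formula when $f$ meets $Y$ near $0$, and tracking the sign conventions in the Lelong--Poincar\'e and $dd^c$ identities; both are standard in Nevanlinna theory and do not affect the final inequality.
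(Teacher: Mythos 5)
Your proof is correct: the paper does not prove this statement itself but quotes it from Brunella (\cite[Lemme 1]{Bru99}), and your argument --- Lelong--Poincar\'e with a metric normalized so that $|s|_h\le 1$, the First Main Theorem $T_{f,r}(\theta)=N_f(r,Y)+m_f(r,Y)+O(1)$ with both terms nonnegative, and division by $T_{f,r_k}(\omega)\to+\infty$ along an admissible sequence --- is exactly the standard Jensen-formula argument used in that reference. The only points needing care, which you handle, are that $\Phi(\theta)$ is independent of the smooth representative $\theta\in[Y]$ because $\Phi$ is closed, and that $T_{f,r}(\omega)\to+\infty$ since $f$ is non-constant.
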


\subsection{Big and nef classes, non-K\"ahler locus, and null locus}\label{subsect:nonKnull}
For the terminology and main result of this section we refer the reader to \cite{Bou04}. As an excellent source for the theory of positive currents the reader may consult \cite{Dem12}.

Let $X$ be a compact Kähler manifold, and fix some reference Kähler metric $\omega$. Given a cohomology class $[\alpha]\in H^{1,1}(X,\mathbb R)$, we say that it is \emph{big} if it contains a Kähler current, \textsl{i.e.} under the natural isomorphism between cohomology of forms and currents, it can be represented by a closed positive $(1,1)$-current $T$ which is bounded below in the sense of currents by some positive multiple of $\omega$ (such a current is called a Kähler current). 

We say that $[\alpha]$ is \emph{nef} if it can be represented by smooth real $(1,1)$-forms bounded below by arbitrarily small negative multiples of $\omega$.

This terminology is borrowed from algebraic geometry: indeed if $X$ is projective and $L$ is a holomorphic line bundle on $X$, then $L$ is big (in the sense that it has maximal Kodaira--Iitaka dimension) if and only if $c_1(L)$ is big in the sense just described, and it is nef (in the sense that its intersection with any irreducible curve is non negative) if and only if $c_1(L)$ is nef as above (see \cite{Dem92b}).

Thanks to \cite[Theorem 0.5]{DP04}, a nef class $[\alpha]$ is big if and only if $\int_X\alpha^{\dim X}>0$.

\begin{ex}
A class which can be represented by a smooth closed semi-positive $(1,1)$-form $\alpha$ which is strictly positive somewhere is both big and nef. In this case, in fact, $[\alpha]$ is trivially a nef class by the very definition, and moreover $\int_X\alpha^{\dim X}>0$. Therefore, as just observed, by \cite[Theorem 0.5]{DP04}, $[\alpha]$ is a big (and nef) class. In particular a semi-Kähler hyperbolic manifold is weakly Kähler hyperbolic.
\end{ex}

Now, given a closed positive current $T$ and a positive real number $c>0$, its corresponding Lelong super-level set is the set
$$
E_c(T)=\{x\in X\mid\nu(T,x)\ge c\},
$$
where $\nu(T,x)$ is the Lelong number of $T$ at $x$. It is a fundamental theorem of Y.-T. Siu \cite{Siu74} that such sets are in fact closed analytic subvarieties of $X$. Next, call
$$
E_+(T)=\bigcup_{c>0}E_c(T),
$$
which is seen to be a countable union of analytic subvarieties.

Coming back to our big classes, we have the following definition, due to \cite{Bou04}.

\begin{defn}
Given a big class $[\alpha]\in H^{1,1}(X,\mathbb R)$, its \emph{non-Kähler locus} $E_{nK}([\alpha])$ is defined to be
$$
E_{nK}([\alpha])=\bigcap_{T\in[\alpha]}E_+(T),
$$
where $T$ ranges among all Kähler currents representing $[\alpha]$.
\end{defn}
One can prove that \cite[Theorem 3.17]{Bou04}: the class $[\alpha]$ is Kähler if and only if $E_{nK}([\alpha])=\emptyset$ (thus the non-Kähler locus gives a measure of the failure for the class $[\alpha]$ of being Kähler and also \lq\lq locates\rq\rq{} this failure), and moreover, given a big class $[\alpha]$ one can always find a Kähler current $T\in[\alpha]$ with analytic singularities such that $E_+(T)=E_{nK}([\alpha])$ (in particular, $E_{nK}([\alpha])$ is an analytic subvariety). Recall that a positive $(1,1)$-current is said to have analytic singularities if it is locally congruent to
$$
\gamma\,i\partial\bar\partial\log\sum |f_j|^2 \pmod{\textrm{smooth forms}},
$$
for some real number $\gamma>0$ and a finite set of (locally defined) holomorphic functions $f_j$, so that if $T$ has analytic singularities $E_{+}(T)$ is an analytic subset and $T$ is smooth outside of $E_{+}(T)$.

\begin{ex}
To have a more geometric flavour of what the non-Kähler locus is, suppose $X$ is projective and $L\to X$ is a holomorphic line bundle. Then, Kodaira's lemma states that $L$ is big if and only if for any ample line bundle $A\to X$ we have that $L^{\otimes k}\otimes A^{-1}$ has a non zero holomorphic global section for all $k\in\mathbb N$ sufficiently large and divisible. In other words, $L$ is big if and only if all of its sufficiently large and divisible tensor powers are isomorphic to an ample line bundle tensor an effective one. The \emph{augmented base locus} of the big line bundle $L$ is then defined as
$$
\mathbf{B}_+(L)=\bigcap_{L^{\otimes k}\sim A\otimes \mathcal O_X(E)} \operatorname{Supp}(E)
$$
where $k\in\mathbb N$ is an exponent for which Kodaira's lemma holds, $A$ is ample, $E$ is an effective divisor and $\operatorname{Supp}(E)$ is the support of the divisor $E$.

Then, if $L$ is big as a line bundle, so is $c_1(L)$ as a cohomology class, and one can prove that $E_{nK}\bigl(c_1(L)\bigr)=\mathbf B_+(L)$.
\end{ex}

Suppose now that the class $[\alpha]$ is nef. Another subset of $X$ which measures the non-Kählerity of $[\alpha]$ in this case is the so-called \emph{null locus}
$$
\operatorname{Null}([\alpha])=\bigcup_{\int_Z \alpha^{\dim Z}=0}Z,
$$ 
where $Z$ varies among all the irreducible positive dimensional analytic subvarieties of $X$ over which $\alpha$ (raised to the right power) integrates to zero. By \cite[Theorem 0.5]{DP04}, $\operatorname{Null}([\alpha])\ne X$ if and only if the class $[\alpha]$ is also big.

We have the following result, which will be an important tool for us later on.

\begin{thm}[{\cite[Theorem 1.1]{CT15}}]
If $[\alpha]\in H^{1,1}(X,\mathbb R)$ is a nef class on a compact (complex manifold bimeromorphic to a) Kähler manifold $X$, then
$$
E_{nK}([\alpha])=\operatorname{Null}([\alpha]).
$$
\end{thm}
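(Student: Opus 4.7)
The plan is to establish the two inclusions separately. For the easier inclusion $\operatorname{Null}([\alpha]) \subseteq E_{nK}([\alpha])$, let $Z \subseteq X$ be a positive-dimensional irreducible analytic subvariety with $\int_Z \alpha^{\dim Z} = 0$, and suppose toward a contradiction that $Z \not\subseteq E_{nK}([\alpha])$. Using the recalled fact that one can find a K\"ahler current $T \in [\alpha]$ with analytic singularities such that $E_+(T) = E_{nK}([\alpha])$, one has $Z \not\subseteq E_+(T)$, so $T$ is smooth and satisfies $T \geq \epsilon \omega$ on a non-empty Zariski open subset of $Z$. Passing to a resolution $\hat Z \to Z$ which makes the pullback of $T$ have analytic singularities along a divisor, one can form the top Bedford--Taylor self-intersection of this pullback on $\hat Z$: its total mass is cohomologically $\int_Z \alpha^{\dim Z} = 0$, yet it dominates the positive measure $\epsilon^{\dim Z} \omega^{\dim Z}$ on a non-empty open subset of $\hat Z$, giving a contradiction.

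For the harder reverse inclusion $E_{nK}([\alpha]) \subseteq \operatorname{Null}([\alpha])$, the strategy is to show that for every point $x \notin \operatorname{Null}([\alpha])$ one can construct a K\"ahler current $T \in [\alpha]$ smooth near $x$. The approach is a \emph{mass concentration} argument: choose a small coordinate ball $B$ containing $x$ which does not meet $\operatorname{Null}([\alpha])$, approximate $[\alpha]$ from within the big cone by classes $[\alpha] + \epsilon [\omega]$, and solve a sequence of complex Monge--Amp\`ere equations
\begin{equation*}
(\alpha + \epsilon \omega + i\partial\bar\partial \varphi_\epsilon)^n = c_\epsilon f_\epsilon \omega^n,
\end{equation*}
where $f_\epsilon$ is a non-negative density supported in $B$ with $\int_X f_\epsilon \omega^n = 1$ and $c_\epsilon$ is a normalising constant chosen so that the cohomological identity is satisfied. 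The hypothesis $x \notin \operatorname{Null}([\alpha])$ is used precisely to guarantee that one can drive $\epsilon \to 0$ while keeping the $L^\infty$-estimates, envelope constructions and capacity bounds of pluripotential theory in big classes (the Boucksom--Eyssidieux--Guedj--Zeriahi machinery) under control, forcing the putative limit potential to be locally bounded away from $B$.

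Passing to the limit along a subsequence, the currents $\alpha + \epsilon \omega + i\partial\bar\partial \varphi_\epsilon$ converge to a closed positive current $T \in [\alpha]$ whose polar locus is contained in $X \setminus B$; applying Demailly's regularization with analytic singularities (without losing positivity of Lelong numbers outside $B$) produces a K\"ahler current in $[\alpha]$ with analytic singular locus disjoint from a neighbourhood of $x$, proving $x \notin E_{nK}([\alpha])$.

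The main obstacle is the second inclusion, and specifically the uniform control of the solutions $\varphi_\epsilon$ as $\epsilon \to 0$ when only the nef (not K\"ahler) class $[\alpha]$ is available. Away from $\operatorname{Null}([\alpha])$, the class $[\alpha]$ restricts to a class which is still big on every subvariety through $x$, and this is what makes local boundedness of potentials work out. Packaging these steps requires both the Demailly--P\u{a}un characterization of the K\"ahler cone (to know $[\alpha]$ is a limit of K\"ahler classes in a quantitatively good way) and the solvability and regularity theory for Monge--Amp\`ere equations in big cohomology classes developed in \cite{BEGZ10}, as well as an induction on dimension via resolution of singularities applied to $\operatorname{Null}([\alpha])$ itself.
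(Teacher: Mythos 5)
First, a point of order: the paper does not prove this statement. It is imported verbatim from Collins--Tosatti \cite[Theorem 1.1]{CT15} and used as a black box, so there is no internal argument to compare yours against. What you have written is, in effect, a pr\'ecis of the strategy of \cite{CT15} (which builds on the mass-concentration scheme of Demailly--P\u{a}un \cite{DP04} and the Monge--Amp\`ere theory in big classes of \cite{BEGZ10}). Your outline does identify the correct architecture --- the easy inclusion $\operatorname{Null}([\alpha])\subseteq E_{nK}([\alpha])$ via a K\"ahler current with analytic singularities, and the hard inclusion via mass concentration, uniform estimates away from $\operatorname{Null}([\alpha])$, and induction on dimension through resolutions --- but as written it is a plan rather than a proof.

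Two concrete gaps. In the easy inclusion, once you pass to a resolution $\mu\colon\hat Z\to Z$ the pullback of $T$ decomposes as $\gamma+[D]$ with $\gamma$ of locally bounded potentials and $D$ effective; the Bedford--Taylor mass $\int_{\hat Z}\gamma^{\dim Z}$ is \emph{not} cohomologically equal to $\int_Z\alpha^{\dim Z}$, contrary to what you assert, and you need the telescoping inequality $\int_{\hat Z}(\mu^*\alpha)^{\dim Z}\ge\int_{\hat Z}\gamma^{\dim Z}\ge\epsilon^{\dim Z}\int_{\hat Z}(\mu^*\omega)^{\dim Z}>0$, which uses the effectivity of $D$, the nefness of $\mu^*\alpha$, and the semipositivity of $\gamma$; the conclusion is right but the stated identification of masses is false for the only object whose self-intersection you can actually take. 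In the hard inclusion, the entire content of \cite{CT15} is precisely the uniform control of $\varphi_\epsilon$ as $\epsilon\to0$ away from $\operatorname{Null}([\alpha])$: your text asserts it (\lq\lq the hypothesis is used precisely to guarantee\dots\rq\rq) without indicating how positivity of $\int_V\alpha^{\dim V}$ for the subvarieties $V\not\subseteq\operatorname{Null}([\alpha])$ is converted into an $L^\infty$ bound; this is where the induction on dimension, the resolution of the null locus, and a gluing of quasi-psh functions actually do the work. Moreover, a limit current whose potential is locally bounded near $x$ is not yet a K\"ahler current: strict positivity $T\ge\delta\omega$ must be manufactured separately (e.g.\ by mixing in a K\"ahler current from the big class $[\alpha]$ before regularizing), after which Demailly regularization yields analytic singularities contained in the unboundedness locus. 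None of these steps is wrong in spirit, but each is a theorem rather than an observation, so the hard inclusion remains unproved as it stands.
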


Thus, in the nef case, the non-Kähler locus of a class $[\alpha]$ is a numerical invariant. This leads to the following.

\begin{defn}
For $M$ a weakly Kähler hyperbolic manifold, the \emph{degeneracy set} $Z_M$ is defined to be
$$
Z_M=\bigcap_{[\mu]\in\mathcal W_M} \operatorname{Null}([\mu]).
$$
\end{defn}

\begin{rem}
If $M$ is a weakly Kähler hyperbolic manifold, then there exists $\mu_0\in\mathcal W_M$ such that 
$$
Z_M=E_{nK}([\mu_0])=\operatorname{Null}([\mu_0]).
$$
In fact, by the Noetherianity, the intersection is stationary, so that $Z_M=\bigcap_{j=1}^N\operatorname{Null}([\mu_j])$, for some $\mu_1,\dots,\mu_N\in\mathcal W_M$. But then, $\mu_0:=\sum_{j=1}^N\mu_j$ belongs to $\mathcal W_M$ and 
$$
\operatorname{Null}([\mu_0])\subset\bigcap_{j=1}^N\operatorname{Null}([\mu_j]),
$$
since, for any $k=1,\dots,\dim M$, and any irreducible subvariety $Z\subseteq M$ of dimension $k$, we have $[\mu_0^k]\cdot Z \ge\sum_{j=1}^N [\mu_j^k]\cdot Z$.

In particular $Z_M$ is empty if and only if $M$ is K\"ahler hyperbolic, and if it is non empty then it cannot have isolated points.
\end{rem}

The numerical degeneracy set is always a proper (positive dimensional, whenever nonempty) subvariety. We shall see later how $Z_M$ controls some properties of geometrical nature, \textsl{e.g.} the Kobayashi hyperbolicity defect of $M$ (see Subsection \ref{subsect:Lang}), or the following.

\begin{prop}
Let $M$ be a weakly Kähler hyperbolic manifold, and $\iota\colon X\hookrightarrow M$ an irreducible closed complex analytic subvariety such that $X\not\subset Z_M$. Then, the image $\operatorname{Im}\bigl(\iota_*\colon\pi_1(X)\to\pi_1(M)\bigr)$ is infinite.
\end{prop}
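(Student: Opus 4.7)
The plan is to argue by contradiction: assume that $\Gamma_0:=\operatorname{Im}\bigl(\iota_*\colon\pi_1(X)\to\pi_1(M)\bigr)$ is finite and derive that $X$ must be contained in the null locus of some class in $\mathcal W_M$, hence in $Z_M$.

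First I would reduce everything to a smooth model. Pick a resolution of singularities $\nu\colon\hat X\to X$, set $\hat\iota:=\iota\circ\nu\colon\hat X\to M$, and recall that $\nu_*\colon\pi_1(\hat X)\to\pi_1(X)$ is surjective for strong resolutions, so $\operatorname{Im}(\hat\iota_*)=\Gamma_0$ is finite as well. Using the remark preceding the proposition, fix a class $\mu_0\in\mathcal W_M$ realizing $Z_M=\operatorname{Null}([\mu_0])$. Because $X\not\subset Z_M$ while $X$ is itself an irreducible positive-dimensional analytic subvariety, the defining condition of $\operatorname{Null}([\mu_0])$ forces
$$
\int_{\hat X}(\hat\iota^*\mu_0)^{\dim \hat X}>0.
$$

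Next I would exploit the finiteness of $\Gamma_0$ by pulling back the universal cover. Let $\pi\colon\tilde M\to M$ denote the universal cover of $M$ and consider the fiber product $\hat Y:=\hat X\times_M\tilde M$. This is the étale covering of $\hat X$ associated to the subgroup $\ker(\hat\iota_*)\subset\pi_1(\hat X)$, and since $\Gamma_0=\pi_1(\hat X)/\ker(\hat\iota_*)$ is finite, the covering $p\colon\hat Y\to\hat X$ has finitely many sheets. In particular $\hat Y$ is a compact Kähler manifold. Let $\tilde\iota\colon\hat Y\to\tilde M$ be the induced map, so that $p^*\hat\iota^*\mu_0=\tilde\iota^*\pi^*\mu_0$.

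Now I invoke the weakly Kähler hyperbolic hypothesis: there exists a bounded $1$-form $\alpha$ on $\tilde M$ with $\pi^*\mu_0=d\alpha$. Pulling back, $p^*\hat\iota^*\mu_0=d(\tilde\iota^*\alpha)$ is an \emph{exact} smooth $(1,1)$-form on the compact manifold $\hat Y$. Its cohomology class therefore vanishes, and in particular
$$
0=\int_{\hat Y}(p^*\hat\iota^*\mu_0)^{\dim\hat Y}=\deg(p)\,\int_{\hat X}(\hat\iota^*\mu_0)^{\dim\hat X},
$$
which contradicts the strict positivity established above. Hence $\Gamma_0$ must be infinite.

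I do not expect any real obstacle: once the passage from $X$ to a resolution $\hat X$ is handled and the pullback of $\tilde M\to M$ is identified with the cover of $\hat X$ attached to $\ker(\hat\iota_*)$, the argument is a direct application of Stokes' theorem on a compact manifold together with the numerical characterization of the null locus. The only step requiring a minor comment is the surjectivity of $\nu_*$ on fundamental groups, which is standard for a resolution obtained by blowing up smooth centers.
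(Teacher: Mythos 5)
Your argument is correct and is essentially the paper's own proof: both lift a finite cover of $X$ (using the finiteness of $\operatorname{Im}(\iota_*)$) to the universal cover $\tilde M$ and play the positivity $\int_X\mu_0^{\dim X}>0$, coming from $X\not\subset Z_M=\operatorname{Null}([\mu_0])$, against the exactness of $\pi^*\mu_0$ via Stokes, your only variation being the harmless detour through a resolution $\hat X\to X$ so that Stokes is applied on a smooth compact manifold rather than on the lifted (possibly singular) compact subvariety of $\tilde M$. Two minor points: the surjectivity of $\nu_*\colon\pi_1(\hat X)\to\pi_1(X)$ you invoke can fail for non-normal $X$ (e.g. the normalization of a nodal curve), but it is not needed since $\operatorname{Im}(\hat\iota_*)\subseteq\operatorname{Im}(\iota_*)$ is finite in any case; and the fiber product $\hat X\times_M\tilde M$ need not be connected, so one should pass to a connected component, which is the finite \'etale cover of $\hat X$ you describe.
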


Since $Z_M$ is always a proper subvariety, this exactly means that a weakly Kähler hyperbolic manifold has generically large fundamental group.

\begin{proof}
Let $\mu\in\mathcal W_M$ such that $X\not\subset\operatorname{Null}([\mu])$, so that $\int_X\mu^{\dim X}>0$. If $\operatorname{Im}\bigl(\iota_*\colon\pi_1(X)\to\pi_1(M)\bigr)$ were finite, passing to a finite cover $\nu\colon \hat X\to X$, we would have that $\iota\circ\nu\colon \hat X\to M$ lifts to the universal cover $\pi\colon\tilde M\to M$. We would thus obtain a compact subvariety $\tilde X$ of $\tilde M$ such that $\pi|_{\tilde X}\colon\tilde X\to X$ is finite. We would therefore get a contradiction, since on the one hand this would imply $\int_{\tilde X}(\pi^*\mu)^{\dim\tilde X}>0$ and on the other hand this integral has to be zero since $\pi^*\mu$ is exact. 
\end{proof}

\subsection{Monge--Ampère equation in big cohomology classes}

During the proof of Theorem \ref{thm:spectralgap} we shall need a few important results as well as the notion of non-pluripolar product, taken from \cite{BEGZ10}. We briefly recall them here for the reader's convenience. 

Given $p$ arbitrary closed positive $(1,1)$-currents $T_1,\dots, T_p$, there exists a well-defined \cite[Proposition 1.6]{BEGZ10} notion of \emph{non-pluripolar product}
$$
(T_1\wedge\cdots\wedge T_p)
$$
which gives back a closed \cite[Theorem 1.8]{BEGZ10} positive $(p,p)$-current which puts no mass on pluripolar sets. This notion enables one to give a meaning and to solve a quite general class of degenerate complex Monge--Ampère equations. In \textsl{op. cit.} it is indeed shown the following.

\begin{thm}[{\cite[Theorem A, Theorem 3.1]{BEGZ10}}]
Let $\alpha\in H^{1,1}(X,\mathbb R)$ be a big cohomology class on a compact Kähler $n$-dimensional manifold $X$. If $\lambda$ is a positive measure on $X$ which puts no mass on pluripolar sets and satisfies moreover the necessary condition $\lambda(X)=\operatorname{vol}(\alpha)$, then there exists a unique closed positive current $T\in\alpha$ such that
$$
(T^n)=\lambda.
$$
\end{thm}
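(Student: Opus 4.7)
The plan is to reduce the equation to a degenerate complex Monge--Amp\`ere equation for quasi-plurisubharmonic potentials and to solve it by approximation from the K\"ahler case. Fix a smooth representative $\theta \in \alpha$; any closed positive current $T \in \alpha$ writes $T = \theta + dd^c \varphi$ for some $\theta$-plurisubharmonic function $\varphi$, unique up to an additive constant, and the equation $(T^n) = \lambda$ becomes a non-pluripolar Monge--Amp\`ere equation in $\varphi$. The natural solution space is the finite-energy class $\mathcal{E}(X,\theta)$ of $\theta$-psh potentials of full Monge--Amp\`ere mass, i.e.\ those $\varphi$ with $\int_X (\theta_\varphi^n) = \operatorname{vol}(\alpha)$, where $\theta_\varphi := \theta + dd^c \varphi$, since only in this class can one reasonably expect uniqueness.

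For existence, I would approximate simultaneously the class and the measure. Set $\theta_\epsilon := \theta + \epsilon \omega$ for a fixed auxiliary K\"ahler form $\omega$; this is a smooth K\"ahler representative of $\alpha + \epsilon[\omega]$. Regularise $\lambda$ into smooth measures $\lambda_\epsilon$ normalised so that $\lambda_\epsilon(X) = \int_X \theta_\epsilon^n$. Yau's theorem yields smooth $\theta_\epsilon$-psh solutions $\varphi_\epsilon$ to $(\theta_\epsilon + dd^c \varphi_\epsilon)^n = \lambda_\epsilon$, normalised by $\sup_X \varphi_\epsilon = 0$. The crux is to control these potentials uniformly in $\epsilon$: one combines Ko\l{}odziej-type capacity estimates with a careful analysis of envelopes of $\theta$-psh functions with prescribed singularities. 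Here the bigness of $\alpha$ enters decisively through the existence of a K\"ahler current with analytic singularities in the class, which provides a $\theta$-psh barrier with controlled poles along the non-K\"ahler locus $E_{nK}(\alpha)$.

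With uniform $L^1$ bounds in hand, extract a limit $\varphi := \bigl(\limsup_{\epsilon \to 0} \varphi_\epsilon\bigr)^*$; it is $\theta$-psh, and by the monotone-convergence properties of the non-pluripolar product one identifies its non-pluripolar Monge--Amp\`ere measure with $\lambda$. The assumption $\lambda(X) = \operatorname{vol}(\alpha)$ then forces $\varphi \in \mathcal{E}(X,\theta)$. For uniqueness, given two solutions $\varphi_1, \varphi_2 \in \mathcal{E}(X,\theta)$, one invokes a comparison principle for the non-pluripolar Monge--Amp\`ere operator restricted to full-mass classes: one compares Monge--Amp\`ere masses on $\{\varphi_1 > \varphi_2\}$ and $\{\varphi_2 > \varphi_1\}$, using that $\max(\varphi_1,\varphi_2)$ also lies in $\mathcal{E}(X,\theta)$ and that $\lambda$ charges no pluripolar set, forcing $\varphi_1 - \varphi_2$ to be constant.

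The main obstacle is precisely the lack of continuity of the non-pluripolar Monge--Amp\`ere operator under general $L^1$-convergence: the technical heart of the argument consists in identifying the correct convergence statements, valid on the full-mass class $\mathcal{E}(X,\theta)$ and under monotone limits, which make both the passage to the limit and the comparison principle rigorous. This is a substantial extension of the Bedford--Taylor--Dinew theory from the K\"ahler to the big-class setting, and it is the reason why this theorem required the full non-pluripolar machinery developed in the paper cited.
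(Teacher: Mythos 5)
First, a point of order: the paper does not prove this statement at all. It is Theorem A (Theorem 3.1) of \cite{BEGZ10}, recalled verbatim in the background subsection on Monge--Amp\`ere equations in big cohomology classes only so that it can be invoked later in the proof of the spectral gap theorem. So there is no internal proof to compare yours with; your sketch has to be measured against the argument of Boucksom--Eyssidieux--Guedj--Zeriahi itself.

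Read that way, your text is a reasonable high-level roadmap (reduction to potentials, full-mass class $\mathcal E(X,\theta)$, approximation, comparison principle), but the two steps that carry all the weight are asserted rather than proved, and as written they would not go through. (a) The limit step: the non-pluripolar Monge--Amp\`ere operator is not continuous under $L^1$ convergence of potentials, and the convergence statements that do hold require monotone sequences, or convergence in capacity together with uniform control relative to $V_\theta$; an arbitrary smoothing $\lambda_\varepsilon$ of a possibly very singular non-pluripolar measure $\lambda$ (which need not be absolutely continuous, so Ko\l{}odziej-type or $L^{1+\varepsilon}$ estimates give you nothing uniform here) produces solutions $\varphi_\varepsilon$ with no monotonicity and only $L^1$ bounds, which is far too weak to identify the Monge--Amp\`ere measure of $(\limsup\varphi_\varepsilon)^*$ with $\lambda$. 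This is precisely why \cite{BEGZ10} proceed differently: they write $\lambda=f\,\nu$ with $\nu$ the Monge--Amp\`ere measure of a potential with minimal singularities (supplied by their Theorem B), truncate the density $f$, solve for the truncated measures with uniform energy/capacity control, and pass to the limit along a scheme where the monotone convergence theorems for the non-pluripolar product genuinely apply. (b) Uniqueness: comparing Monge--Amp\`ere masses on $\{\varphi_1>\varphi_2\}$ and $\{\varphi_2>\varphi_1\}$ yields a comparison/domination principle, but when the two measures coincide this does not force $\varphi_1-\varphi_2$ to be constant; uniqueness in the full-mass class is a substantially deeper fact (Dinew's uniqueness theorem in the K\"ahler case, extended to big classes in \cite{BEGZ10}), whose proof relies on mixed Monge--Amp\`ere measures and energy estimates rather than on the comparison principle alone. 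So the proposal identifies the correct strategy but leaves open exactly the two points where the actual difficulty of the theorem lies.
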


Here $(T^n)$ stands for the non-pluripolar product of $T$ with itself taken $n$ times, and $\operatorname{vol}(\alpha)$ is the volume of the cohomology class $\alpha$ in the sense of \cite{Bou04}.
If moreover the measure $\lambda$ has $L^{1+\varepsilon}$ density with respect to the Lebesgue measure for some $\varepsilon>0$, then the solution $T$ has minimal singularities \cite[Theorem B, Theorem 4.1]{BEGZ10} in the sense of Demailly.

Finally, whenever the measure $\lambda$ is a smooth strictly positive volume form and the class $\alpha$ is moreover nef, then the solution $T$ is also smooth in the complement of the non-Kähler locus $E_{nK}(\alpha)$ of $\alpha$ (this is the content of \cite[Theorem C, Theorem 5.1]{BEGZ10}).

\subsection{Kähler hyperbolicity and generically finite maps}

Here we study how the different notions of Kähler hyperbolicity propagate backwards under generically finite morphisms. We first need a lemma. 

\begin{lem}\label{bounded}
Let $f\colon (M,g_M)\to (N,g_N)$ be a smooth map between compact Riemannian manifolds, and suppose that we are given a bounded $k$-form $\eta$ on the Riemannian universal cover $(\tilde N,\tilde g_N)$ of $(N,g_N)$. 

Let $\tilde f\colon (\tilde M,\tilde g_M)\to (\tilde N,\tilde g_N)$ be a lifting of $f$. Then, the pull-back $\tilde f^*\eta$ is bounded.
\end{lem}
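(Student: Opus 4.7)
The plan is to reduce everything to the statement that the operator norm of the differential $d\tilde f$ is a $\pi_M$-periodic function on $\tilde M$, and therefore uniformly bounded on $\tilde M$. Once this is established, pulling back a bounded $k$-form is immediate.

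More precisely, I would first observe that since $\tilde g_M = \pi_M^* g_M$ and $\tilde g_N = \pi_N^* g_N$, both covering projections $\pi_M\colon (\tilde M,\tilde g_M)\to (M,g_M)$ and $\pi_N\colon (\tilde N,\tilde g_N)\to (N,g_N)$ are local Riemannian isometries. From the defining relation $\pi_N\circ\tilde f=f\circ\pi_M$ of a lifting, differentiating at a point $\tilde x\in\tilde M$ gives
$$
(d\pi_N)_{\tilde f(\tilde x)}\circ (d\tilde f)_{\tilde x}=(df)_{\pi_M(\tilde x)}\circ (d\pi_M)_{\tilde x}.
$$
Since $(d\pi_M)_{\tilde x}$ and $(d\pi_N)_{\tilde f(\tilde x)}$ are linear isometries between the tangent spaces in play, the operator norm of $(d\tilde f)_{\tilde x}$ equals the operator norm of $(df)_{\pi_M(\tilde x)}$. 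Setting $C_1:=\sup_{x\in M}|(df)_x|_{\mathrm{op}}<\infty$ (finite by smoothness of $f$ and compactness of $M$), we obtain $|(d\tilde f)_{\tilde x}|_{\mathrm{op}}\le C_1$ uniformly in $\tilde x\in\tilde M$.

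Now fix $\tilde x\in\tilde M$ and tangent vectors $v_1,\ldots,v_k\in T_{\tilde x}\tilde M$. By definition of the pull-back,
$$
(\tilde f^*\eta)_{\tilde x}(v_1,\ldots,v_k)=\eta_{\tilde f(\tilde x)}\bigl((d\tilde f)_{\tilde x}v_1,\ldots,(d\tilde f)_{\tilde x}v_k\bigr).
$$
Using the standard estimate relating a $k$-form to its comass and the lengths of its arguments, together with the uniform bound on $d\tilde f$ established above and the hypothesis $\|\eta\|_{\tilde g_N,\infty}:=\sup_{\tilde y\in\tilde N}|\eta_{\tilde y}|_{\tilde g_N}<\infty$, one obtains
$$
\bigl|(\tilde f^*\eta)_{\tilde x}(v_1,\ldots,v_k)\bigr|\le\|\eta\|_{\tilde g_N,\infty}\,C_1^{k}\,|v_1|_{\tilde g_M}\cdots|v_k|_{\tilde g_M},
$$
so $|\tilde f^*\eta|_{\tilde g_M}\le C_1^k\,\|\eta\|_{\tilde g_N,\infty}$ pointwise on $\tilde M$, which is the desired uniform bound.

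The only conceptual point is the equivariance/local-isometry argument identifying $|d\tilde f|$ with $|df|\circ\pi_M$; everything else is a routine unwinding of norms. I do not expect any genuine obstacle here, since the existence of the lift $\tilde f$ is assumed and the compactness of $M$ takes care of uniformity.
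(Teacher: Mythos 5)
Your proposal is correct and follows essentially the same route as the paper: bound $|\tilde f^*\eta|$ pointwise by $\|\eta\|_{L^\infty}$ times a uniform bound on the differential of $\tilde f$, the uniformity coming from compactness of $M$ together with the fact that both coverings are Riemannian (you simply make explicit, via the relation $d\pi_N\circ d\tilde f=df\circ d\pi_M$ and the local isometries, the equivariance that the paper invokes in one line). The only cosmetic difference is that the paper works with the operator norm of $\wedge^k d_p\tilde f$ directly, whereas you use the bound $C_1^k$ on products of lengths, which yields the same conclusion.
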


\begin{proof}
For the pointwise norm of $\tilde{f}^*\eta$ we have at a point $p\in\tilde M$
$$
\begin{aligned}
|\tilde{f}^*\eta|_{\tilde{g}_M} & =\sup_{\beta\in \Lambda^kT_p\tilde M,\ |\beta|_{\tilde{g}_M}=1}|\tilde{f}^*\eta(\beta)| \\ 
& =\sup_{\beta\in \Lambda^kT_p\tilde M,\ |\beta|_{\tilde{g}_M}=1}|\eta\bigl(\wedge^kd_p\tilde{f}(\beta)\bigr)| \\
& \le \|\eta\|_{L^{\infty}\Omega^k(\tilde{N},\tilde{g}_N)}||\wedge^kd_p\tilde{f}||_{\mathrm{op}},
\end{aligned}
$$
where $||\wedge^kd_p\tilde{f}||_{\mathrm{op}}$ denotes the operator norm of 
$$
\wedge^kd_p\tilde{f}:(\Lambda^kT_p\tilde{M},\tilde{g}_M)\rightarrow (\Lambda^kT_p\tilde{N},\tilde{g}_N).
$$
Since $M$ is compact and the coverings are Riemannian, there exists a positive constant $C$ such that 
$$
\sup_{p\in \tilde  {M}}||\wedge^kd_p\tilde{f}||_{\mathrm{op}}\leq C.
$$ 
Thus, we can conclude that 
$$
\sup_{p\in \tilde{M}}|\tilde{f}^*\eta|_{\tilde{g}_M}\leq C\|\eta\|_{L^{\infty}\Omega^k(\tilde{N},\tilde{g}_N)},
$$ 
that is $\tilde{f}^*\eta$ is bounded. 
\end{proof}

With this lemma at our disposal, we can show the following key proposition.

\begin{prop}\label{prop:genfinite}
Let $N$ be a smooth compact connected K\"ahler manifold of dimension $\dim N=k > 0,$ and let $F\colon N \to M $ be a generically finite holomorphic map, then:  
\begin{itemize}
\item[(i)] If $M$ is K\"ahler hyperbolic, then $N$ is semi-K\"ahler hyperbolic. 
\item[(ii)] If $k  = m=\dim M$, then $N$ is  weakly K\"ahler hyperbolic if so is $M$. 
\item[(iii)] If $k < m$, $M$ is weakly K\"ahler hyperbolic, and $F(N) \not\subset Z_M$, then $N$ is weakly Kähler hyperbolic, too.
\end{itemize}
\end{prop}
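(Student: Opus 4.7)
My plan is to treat all three cases in parallel by pulling back via $F$ and via a lift $\tilde F\colon \tilde N\to \tilde M$ to universal covers, which exists since $\tilde N$ is simply connected. The common engine is this: if $\mu$ is a closed real $(1,1)$-form on $M$ with $\pi_M^*\mu = d\alpha$ and $\alpha$ bounded on $\tilde M$, then $\pi_N^*(F^*\mu) = \tilde F^*(\pi_M^*\mu) = d(\tilde F^*\alpha)$, and Lemma \ref{bounded} applied to $F$ regarded as a map between compact Riemannian manifolds (with any chosen K\"ahler metrics) ensures that $\tilde F^*\alpha$ is bounded on $\tilde N$. So in every case the only nontrivial step is to verify the correct positivity of the cohomology class $[F^*\mu]\in H^{1,1}(N, \R)$.

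For (i), I take $\mu=\omega_M$ the K\"ahler form on $M$. Then $F^*\omega_M$ is a smooth, closed, semi-positive $(1,1)$-form on $N$; since $F$ is generically finite, $dF$ has maximal rank on a nonempty Zariski open set $U\subseteq N$ (its complement is cut out by the vanishing of minors of the Jacobian), and on $U$ the pullback $F^*\omega_M$ is strictly positive definite. This is exactly Definition \ref{def:semiK}, so $N$ is semi-K\"ahler hyperbolic.

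For (ii) and (iii), I take $\mu\in\mathcal W_M$ and verify that $[F^*\mu]$ is big and nef on $N$. Nefness of the pullback is formal from the definition: approximate $\mu$ by smooth forms $\ge -\varepsilon\,\omega_M$, pull back, and compare $F^*\omega_M$ with $\omega_N$ via compactness of $N$. For bigness, by \cite[Theorem 0.5]{DP04}, it suffices to see that $\int_N (F^*\mu)^k > 0$. In case (ii), where $k=m$, the projection formula gives $\int_N (F^*\mu)^m = (\deg F)\int_M \mu^m > 0$ since $\mu$ is big on $M$. In case (iii), generic finiteness forces $\dim F(N) = k$; by the remark following the definition of $Z_M$, there exists $\mu_0 \in \mathcal W_M$ with $Z_M = \operatorname{Null}([\mu_0])$, so the assumption $F(N)\not\subset Z_M$ yields $\int_{F(N)}\mu_0^k > 0$, and the projection formula applied to the generically finite map $F\colon N\to F(N)$ gives $\int_N (F^*\mu_0)^k = \deg(F\colon N\to F(N))\cdot \int_{F(N)}\mu_0^k > 0$.

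I foresee no serious obstacle here: once Lemma \ref{bounded} supplies the bounded primitive, the remainder is a routine nefness-plus-bigness bookkeeping via the projection formula. The only minor technicality is that in case (iii) the subvariety $F(N)$ may well be singular, so $\int_{F(N)}\mu_0^k$ should be read through a desingularization of $F(N)$, where the projection formula is standard; this does not affect the conclusion.
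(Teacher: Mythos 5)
Your proposal is correct and follows essentially the same route as the paper's proof: Lemma \ref{bounded} supplies the bounded primitive on $\tilde N$, part (i) uses semi-positivity of $F^*\omega_M$ together with generic maximal rank of $dF$, and parts (ii)--(iii) reduce bigness of the nef class $F^*[\mu]$ to positivity of $\int_N(F^*\mu)^k$ via the projection formula and the Demailly--P\u{a}un criterion, with (iii) resting on $F(N)\not\subset Z_M=\operatorname{Null}([\mu_0])$ exactly as the paper does (the paper phrases this through $E_{nK}([\mu])$ and \cite[Theorem 1.1]{CT15}, which is equivalent).
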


\begin{proof}
Let $X = F(N)$, and consider the following diagram
$$
\xymatrix{
\tilde N \ar[rr]^{\tilde{F}}\ar[d]_p & & \tilde M\ar[dd]^\pi\\
 N \ar[dr]_F & &\\
 & X \ar@{^{(}->}[r] & M,
}
$$
where $p\colon\tilde N\to N$ is the universal covering and $\tilde{F}$ is a lifting of $F \circ p$.

Recall that the manifold $N$ is K\"ahler with respect to some K\"ahler metric $\eta$, and let $R\subset N$ be the set of points where the rank of the differential of $F$ is smaller then $k$. Let $X_\textrm{reg}$ the set of regular points in $X$ and set 
$$
U := X_{\textrm{reg}} \setminus F(R).
$$ 
Then, $U$ and $F^{-1}(U)$ are nonempty Zariski open sets such that  
$$
F|_{F^{-1}(U)}\colon F^{-1}(U) \to U
$$ 
is a finite \'etale cover.  

\medskip

To  prove (i), we observe that if $(M,\omega)$ is K\"ahler hyperbolic, then the form $F^*(\omega)$ is semi-positive and it is positive on $F^{-1}(U)$, therefore by Lemma \ref{bounded} $(N,F^*(\omega))$ is semi-K\"ahler hyperbolic.

\smallskip

To prove (ii), we note that if $k=m$ then $X=M$. If $M$ is weakly K\"ahler hyperbolic and $\mu \in \mathcal{W}_M$ then $F^*(\mu) \in \mathcal{W}_N$, which is thus nonempty. 

\smallskip

We now prove (iii). The fact that $X=F(N)$ is not contained in $Z_M$ gives us a $\mu \in \mathcal{W}_M$ such that $X\not\subset E_{nK}([\mu])$. We have that $F^*[\mu]$ is a nef class. Since $F$ has generically finite fibers, the integral $\int_N F^*(\mu)^k$ is positive if and only if the integral $\int_X \mu^k$ is. The latter being the intersection number $[\mu]^k\cdot X$, it must be nonzero by \cite[Theorem 1.1]{CT15}, for $X$ is not contained in $E_{nK}([\mu])$. Thus $F^*[\mu]$ is also big and $F^*[\mu]\in\mathcal W_N$ by Lemma \ref{bounded}.
\end{proof}

\begin{quest}
Can we replace the condition that $F$ is a generically finite holomorphic map with generically finite \emph{meromorphic} map? This would amount to understand if and how Kähler hyperbolic type properties descend through a proper modification. 
\end{quest}

We plan to address this question which is essentially about whether the property of being weakly Kähler hyperbolic is of birational nature in a forthcoming note.

So, if $M$ is weakly Kähler hyperbolic, since $Z_M$ is always a proper subvariety, submanifolds passing through a general point of $M$ have to be weakly Kähler hyperbolic. Especially, any rational or elliptic curve in $M$ is contained in $Z_M$. We shall see later that a weakly K\"ahler hyperbolic manifold $M$ is of general type, and therefore in this case if $Z_M$ does not contain rational curves, then $K_M$ is ample.

\medskip

Let us finish this subsection with the following proposition, whose goal is to construct a bounded primitive (on the universal cover) of closed real $(1,1)$-forms which represents the cohomology class of a real Cartier divisor, after a finite ramified covering.

\begin{prop}\label{prop:modiftoboundedprimitive}
Let $X$ be a complex projective manifold. There exists a smooth projective manifold $Z$ together with a finite surjective morphism $f\colon Z \to X$ such that, 
 for every $\mathbb R$-divisor $D$ and every closed real $(1,1)$-form $u$ on $X$ representing the cohomology class $[D]\in H^{1,1}(X, \mathbb R)$, we have that  $\pi^*f^*u$ has a bounded primitive, where $\pi\colon\tilde{Z}\to Z$ denotes the universal covering space of $Z$. 
\end{prop}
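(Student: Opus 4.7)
The plan is to exploit the finite-dimensionality of $NS(X)_{\mathbb R}$ to reduce the problem to a fixed finite set of generators, and then construct $Z$ via an iterated Bloch--Gieseker covering scheme that handles each generator simultaneously.

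\textbf{Reduction via the $\partial\bar\partial$-lemma.} First observe that the existence of a bounded primitive for $\pi^{*}f^{*}u$ depends only on the cohomology class $[u]\in H^{1,1}(X,\mathbb R)$. Indeed, by the $\partial\bar\partial$-lemma on the compact K\"ahler manifold $X$, two smooth closed real $(1,1)$-forms representing the same class differ by $i\partial\bar\partial \varphi$ for some $\varphi\in C^{\infty}(X,\mathbb R)$. The form $\pi^{*}f^{*}(i\partial\bar\partial\varphi)=i\partial\bar\partial(\pi^{*}f^{*}\varphi)$ admits the primitive $\tfrac{i}{2}(\bar\partial-\partial)(\pi^{*}f^{*}\varphi)$, whose pointwise norm is controlled by $|d(f^{*}\varphi)|$; since $f^{*}\varphi$ is smooth on the compact manifold $Z$ and $\pi\colon\tilde Z\to Z$ is a Riemannian cover, this primitive is bounded on $\tilde Z$ (by the same mechanism as in Lemma \ref{bounded}). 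Classes in $\operatorname{Pic}^{0}(X)\otimes\mathbb R$ vanish in $H^{1,1}(X,\mathbb R)$, hence are dealt with by the above. Since $NS(X)_{\mathbb R}$ is finite-dimensional and the ample cone is open in it, one picks a basis $[D_{1}],\dots,[D_{\rho}]$ of $NS(X)_{\mathbb Q}$ consisting of classes of smooth very ample integral divisors (Bertini). It is then enough to find one $Z$ together with, for each $i$, a specific smooth representative $u_{i}$ of $f^{*}[D_{i}]$ whose pull-back to $\tilde Z$ admits a bounded primitive.

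\textbf{Construction of $Z$.} For each $i$, invoke the Bloch--Gieseker covering construction: for $n_{i}$ sufficiently large and a smooth divisor $B_{i}\in|n_{i}D_{i}|$ (guaranteed by Bertini since $n_{i}D_{i}$ is very ample), let $\phi_{i}\colon Y_{i}\to X$ be the cyclic cover of degree $n_{i}$ ramified along $B_{i}$. Then $Y_{i}$ is smooth projective, $\phi_{i}$ is finite, and $\phi_{i}^{*}\mathcal O_{X}(D_{i})\cong M_{i}^{\otimes n_{i}}$ for an explicit line bundle $M_{i}$ on $Y_{i}$ associated to the ramification divisor. Define $Z$ to be a desingularization of the normalized fibered product $Y_{1}\times_{X}\cdots\times_{X}Y_{\rho}$, and $f\colon Z\to X$ the induced finite surjective morphism. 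The iterated branched covering also guarantees that $\pi_{1}(Z)$ is non-amenable (cf.\ Proposition \ref{prop:nonamenable}), a necessary condition for the bounded-primitive property to hold on $\tilde Z$.

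\textbf{Bounded primitives and main obstacle.} On $Z$ one equips each $f^{*}\mathcal O_{X}(D_{i})=N_{i}^{\otimes n_{i}}$ (for $N_{i}$ the pullback of $M_{i}$) with a Hermitian metric $h_{i}$ built from the natural sections produced by the cyclic covering structure. Using Poincar\'e--Lelong on the tautological section of $N_{i}$, the Chern form $u_{i}:=c_{1}\bigl(f^{*}\mathcal O_{X}(D_{i}),h_{i}\bigr)=n_{i}\,c_{1}(N_{i},h_{i}^{1/n_{i}})$ is represented, away from the ramification, by the $\partial\bar\partial$ of a potential whose logarithmic behavior near the branch locus is dictated explicitly by the covering. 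The delicate and principal step is to prove that the primitive of $\pi^{*}u_{i}$ provided by this potential is in fact uniformly bounded on the non-compact $\tilde Z$. This is an $L^{\infty}$ gradient estimate that interfaces the local structure of the cyclic covers near the ramification loci with the global hyperbolic-type geometry of $\tilde Z$ produced by the iterated branching, and is carried out following the analytic techniques of \cite{Eys97}. This final verification is by far the hardest part, while Steps 1 and 2 are essentially formal.
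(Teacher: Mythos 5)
Your first step (the reduction, via the $\partial\bar\partial$-lemma, to a fixed finite set of classes spanning $NS(X)_{\mathbb R}$, and the remark that the bounded-primitive property depends only on the cohomology class) is correct and coincides with the beginning and end of the paper's argument. The construction of $Z$, however, contains a genuine gap that cannot be repaired within your scheme. A Bloch--Gieseker cyclic cover branched along a smooth member of $|n_iD_i|$ merely produces an $n_i$-th root of $\mathcal O_X(D_i)$; it provides no mechanism forcing $f^*[D_i]$ to become exact on the universal cover $\tilde Z$, let alone to admit a bounded primitive there. Since $f$ is finite surjective, $f^*[D]\neq 0$ in $H^2(Z,\mathbb R)$ whenever $[D]\neq 0$, so exactness of $\pi^*f^*u$ already forces $\pi_1(Z)$ to be infinite (in fact non-amenable, cf.\ Proposition \ref{prop:nonamenable}). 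Your claim that ``the iterated branched covering also guarantees that $\pi_1(Z)$ is non-amenable'' is unjustified and false in general: for $X=\mathbb P^n$, $n\ge 2$, cyclic covers branched along smooth hypersurfaces are simply connected, and a desingularized fibre product of such covers again has trivial (or at best finite) fundamental group, so $\tilde Z$ is compact and the pullback of the nonzero hyperplane class cannot be exact on $\tilde Z$ at all. Consequently the ``final $L^\infty$ estimate'' you defer to the techniques of \cite{Eys97} is not a hard verification left open: in your setup the statement it is supposed to establish is simply false, and this deferred step is exactly where the entire content of the proposition lies.

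What is missing is the key input of the paper's proof, namely a source of negative curvature. The paper fixes a compact ball quotient $\Xi_N$ (which exists by Borel and is projective), together with a finite surjective morphism $\Xi_N\to\mathbb P^N$ given by Noether's lemma; embedding $X$ into $\mathbb P^N$ by a power of an ample bundle $A$ and using Kleiman's transversality theorem, the fibre product $X\times_{\mathbb P^N}\Xi_N$ can be taken smooth, and a connected component $Z_A$ maps finitely onto $X$. The pullback to $Z_A$ of any representative of $c_1(A)$ is, up to the cohomological reduction, pulled back from $\Xi_N$, and on the universal cover $\mathbb B^N$ of the negatively curved $\Xi_N$ every closed form lifted from the compact quotient has a bounded primitive; this boundedness is then transported through the finite map by Lemma \ref{bounded}. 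Iterating the construction over ample classes $A_1,\dots,A_d$ forming a basis of $NS(X)_{\mathbb R}$ yields the desired $Z$. Some replacement for this negative-curvature (or K\"ahler-hyperbolicity) input is indispensable; taking roots of line bundles via cyclic covers does not supply it.
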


\begin{proof}
Given any ample line bundle $A\to X$, fix a projective embedding $\iota\colon X\to\mathbb P^N$ so that $\iota^*\mathcal O(1)\simeq A^{\otimes m}$ for some $m\in\mathbb N$. Let $\Xi_N$ be a compact complex manifold uniformized by the unit ball $\mathbb B^N\subset\mathbb C^N$. Observe that the latter always exists by \cite[Corollary of Theorem A]{Bor63} and is projective, since it has ample cotangent bundle. 

By Noether's lemma, there exists a finite surjective morphism $\nu\colon\Xi_N \to \mathbb{P}^N$. By a theorem of Kleiman \cite{Kle74} (see also \cite[Theorem III.10.8]{Har77}) acting with a generic projective automorphism, we can assume that the subscheme $\nu^{-1}(\iota(X))\subset \Xi_N$ which is isomorphic to $X\times_{\mathbb{P}^N} \Xi_N$ is in fact a smooth submanifold. Call $Z_A$ a nonempty connected component of such fiber product.

Then, the natural map $f_A: Z_A \to X$ is a finite surjective map and $\xi:=\nu|_{Z_A}= \iota \circ f_A$. In particular $f^*_A A^{\otimes m}=\xi^*\mathcal O(1)$.  

Since $\Xi_N$ has negative Riemannian sectional curvature, every closed form $v$ on $\Xi_N$ is such that $p^*v$ has a bounded primitive, where $p\colon\mathbb B^N \to \Xi_N$ denotes the universal covering space. 

For every de Rham representative $u$ of $c_1(A)$ we have at the level of cohomology $[f^*_Au]=[\xi^*w]=[\nu^* w]|_{Z_A}$ for some $w\in\frac 1m\,c_1\bigl(\mathcal O(1)\bigr)$, and using Lemma \ref{bounded} and Remark \ref{rem:khypcohomological}, it follows that $q^*f^*_Au $ has a bounded primitive, where $q\colon\tilde Z_A \to Z_A$ denotes the universal covering space, since $p^*\nu^* w$ has.

To finish the proof, let $A_1,\dots, A_d\to X$ be a finite set of ample line bundles such that their real Chern classes form a basis of the real Néron--Severi space $NS(X)_{\R} \subset H^{1,1}(X,\R)$, which is spanned by the numerical classes of $\R$-divisors. This exists because the ample cone is open and thus contains a basis consisting of classes of ample divisors. 

We now iterate the above construction, starting with $A_1\to X$. We take $f_{A_1}^*A_2\to Z_{A_1}$, and observe that $f_{A_1}^*A_2$ is still ample since $f_{A_1}^*$ is a finite morphism. We thus get a smooth projective manifold $Z_{A_1,A_2}$ together with a finite surjective morphism $f_{A_1,A_2}\colon Z_{A_1,A_2}\to X$ such that every de Rham representative of $c_1(A_1)$ or $c_1(A_2)$ has a bounded primitive once pulled-back on the universal cover of $Z_{A_1,A_2}$. 

Continuing in this fashion, we come up with a smooth projective manifold $Z:=Z_{A_1,\dots,A_d}$, and with a finite surjective map $f:=f_{A_1,\dots,A_d}\colon Z\to X$ which has the required properties for all classes $\{c_1(A_i)\}_{1\le i \le d}$, and hence for all classes $[D] \in NS(X)_{\R}$. 
\end{proof}

\begin{cor} There exists a complex projective manifold $Z$ together with a nef and big divisor $D$ 
such that every closed positive current representing 
the cohomology class $[D]\in H^{1,1}(X, \R)$ has no representative with locally bounded plurisubharmonic local potentials and for every smooth closed $(1,1)$-form $u$ on $X$ representing $[D]$,  $\pi^* u$ 
has a bounded primitive, where $\pi\colon\widetilde{Z}\to Z$ denotes the universal covering space of $Z$. 
\end{cor}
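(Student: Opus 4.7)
The plan is to combine Proposition \ref{prop:modiftoboundedprimitive} with a known pluripotential-theoretic example, and then to check that the \emph{no bounded potentials} property descends along a finite cover. Concretely, I would first fix a smooth projective manifold $X_0$ together with a nef and big $\R$-divisor $D_0$ such that the class $[D_0]\in H^{1,1}(X_0,\R)$ cannot be represented by any closed positive current with locally bounded plurisubharmonic local potentials. The existence of such pathological big and nef classes is a well-documented phenomenon in the pluripotential theory of big classes (see \cite{BEGZ10} and variants, where the envelope of quasi-psh functions in such a class is shown to be unbounded, e.g.\ when the null locus has sufficient complexity).

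Next, I would apply Proposition \ref{prop:modiftoboundedprimitive} to $X_0$, obtaining a smooth projective manifold $Z$ together with a finite surjective morphism $f\colon Z\to X_0$ enjoying the bounded primitive property for pullbacks of smooth representatives of real Cartier classes on $X_0$. Set $D:=f^*D_0$. Nefness is stable under holomorphic pull-back, and $\int_Z(f^*D_0)^{\dim Z}=\deg(f)\int_{X_0}D_0^{\dim X_0}>0$, so $D$ is nef and big. Moreover the bounded primitive property for $\pi^*u$, for every smooth closed $(1,1)$-form $u$ on $Z$ representing $[D]$, follows immediately from Proposition \ref{prop:modiftoboundedprimitive}, as $f^*D_0$ is a real Cartier divisor on $Z$.

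The remaining — and subtlest — point is to verify that $[D]=f^*[D_0]$ still admits no closed positive representative with locally bounded plurisubharmonic potentials. I would argue by descent: suppose $T$ is such a current on $Z$ and fix a smooth representative $\beta$ of $[D_0]$ on $X_0$. Writing $T=f^*\beta+dd^c\psi$ with $\psi$ bounded on $Z$, the projection formula gives $f_*T=\deg(f)\,\beta+dd^c f_*\psi$ on $X_0$; since the fibrewise trace $f_*\psi$ of a bounded function under the finite holomorphic map $f$ is again bounded, $\frac{1}{\deg f}f_*T$ would be a closed positive representative of $[D_0]$ with bounded local potential, contradicting the choice of $(X_0,D_0)$.

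The main obstacle is the very first step, namely producing the pathological pair $(X_0,D_0)$: once this is granted, the rest of the argument is a transparent combination of Proposition \ref{prop:modiftoboundedprimitive} and the above descent of positive currents along finite surjective morphisms. The descent itself is not difficult, but one must be careful in invoking the projection formula at the level of currents and to check that the fibrewise trace of a bounded psh-type quantity remains bounded, which is ensured by the finiteness of $f$.
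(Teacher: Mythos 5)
Your proposal is correct and takes essentially the same route as the paper: the paper also starts from a projective manifold with a nef and big divisor whose class admits no closed positive representative with locally bounded potentials (the precise reference being \cite[Example 5.4]{BEGZ10}), pulls it back along the finite cover furnished by Proposition \ref{prop:modiftoboundedprimitive}, and excludes bounded potentials upstairs by pushing the current forward and dividing by $\deg(f)$, exactly as in your descent step.
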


\begin{proof}
Let $X$ be a projective manifold on which there is a nef and big divisor $D'$ 
such that every closed positive current representing 
$[D']$ has locally unbounded local plurisubharmonic potentials. For the construction of such a manifold with a divisor with these properties, see \textsl{e.g.} \cite[Example 5.4]{BEGZ10}. 

Now, apply Proposition \ref{prop:modiftoboundedprimitive} to the pair $(X,D')$. The divisor $D=\mu^* D'$ has the required property for, if $[D]$ had a representative with locally bounded plurisubharmonic local potentials, then
$[D']=\frac{1}{\deg(\mu)} \mu_*[D]$ would, too. 
\end{proof}

\begin{rem}
Unfortunately, the manifold $Z$ we construct is K\"ahler hyperbolic. Hence, we have to leave open the question whether there exists a weakly K\"ahler hyperbolic manifold which is not semi-K\"ahler hyperbolic. 
\end{rem}

\section{Spectral Gap and Nonvanishing for Degenerate K\"ahler Hyperbolic Metrics}

In this section we show how the notion of weakly K\"ahler hyperbolicity we introduced earlier entails various consequences for the spectrum of the Hodge--Kodaira Laplacian acting on the space of $L^2$ $(p,0)$-forms of the universal covering of a suitable modification $\nu\colon M'\to M$ of our starting manifold $M$. We will prove that it yields the absence of zero in the spectrum of $\Delta_{\overline{\partial},p,0}$ acting on $L^2\Omega^{p,0}(\tilde{M'})$, and consequently the vanishing of $L^2$ holomorphic $p$-forms on the universal covering $\tilde{M'}$ with $0\leq p<m$. 

In the second part, joining these vanishing results with the argument given by Gromov in \cite{Gro91} and the birational invariance of $L^2$-Hodge $(p,0)$-numbers, we show that the space of $L^2$ holomorphic $m$ forms on the universal cover $\tilde{M}$ of $M$ is infinite dimensional provided $M$ is weakly K\"ahler hyperbolic.

\subsection{Spectral gap}

We first deal with the announced spectral gap.

\begin{thm}
\label{thm:spectralgap}
Let $M$ be a compact K\"ahler manifold of complex dimension $\dim M=m$, and suppose that $M$ is weakly Kähler hyperbolic. 

Then, there exists a compact Kähler manifold $(M',\omega)$ together with a modification $\nu\colon M'\to M$ such that the following holds.

Let $\pi\colon(\tilde{M'},\tilde{\omega})\to (M',\omega)$ be the universal covering of $M'$ endowed with the pull-back metric $\tilde{\omega}:=\pi^*\omega$. For $0\leq p\le m-1$, let 
$$
\Delta_{\overline{\partial},p,0}\colon L^2\Omega^{p,0}(\tilde{M'},\tilde{\omega})\rightarrow L^2\Omega^{p,0}(\tilde{M'},\tilde{\omega})
$$ 
be the closure of $\Delta_{\overline{\partial},p,0}\colon\Omega_c^{p,0}(\tilde{M'})\rightarrow \Omega_c^{p,0}(\tilde{M'})$.

Then, zero is not in the spectrum $\sigma(\Delta_{\overline{\partial},p,0})$ for any $0\leq p\le m-1$, and furthermore $\Delta_{\overline{\partial},m,0}$ has closed range.
\end{thm}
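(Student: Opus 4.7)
The plan is to adapt the Gromov--Eyssidieux spectral gap strategy to the weakly K\"ahler hyperbolic setting, at the price of passing to a birational modification on which the big and nef class $[\mu]\in\mathcal{W}_M$ admits a controllable K\"ahler representative.

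The first step is to construct $\nu\colon M'\to M$ and $\omega$. Fixing $\mu\in\mathcal{W}_M$ with bounded primitive $\alpha$ on $\tilde M$, I would use the Boucksom--Collins--Tosatti identification $E_{nK}([\mu])=\operatorname{Null}([\mu])$ together with the regularization/smoothing results of \cite{BEGZ10} to produce a K\"ahler current $T_0\in[\mu]$ with analytic singularities whose polar set is exactly the null locus. Applying Hironaka's principalization to the coherent ideal sheaf of singularities of $T_0$ yields a proper bimeromorphic modification $\nu\colon M'\to M$ together with a cohomological decomposition
$$
\nu^{*}[\mu]=[\omega]+\sum_i a_i[D_i],
$$
where $[\omega]$ is a K\"ahler class on $M'$ and $\sum a_i D_i$ is an effective $\mathbb{R}$-divisor with simple normal crossings support. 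I would then fix a smooth K\"ahler form, still denoted $\omega$, representing this class.

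The second step is the Gromov estimate for $(p,0)$-forms with $p\le m-1$. By Lemma \ref{bounded}, $\nu^{*}\mu$ lifts to $\pi^{*}\nu^{*}\mu=d\tilde\alpha$ with $\tilde\alpha$ bounded on $\tilde{M'}$. For any $\phi\in\Omega_c^{p,0}(\tilde{M'})$ the Lefschetz primitivity identity gives $\|\phi\|_{\tilde\omega}^{2}=c_{p}\int\phi\wedge\bar\phi\wedge\tilde\omega^{m-p}$ with $c_p>0$. Since $[\omega]=\nu^{*}[\mu]-[D]$, one can replace $\tilde\omega^{m-p}$ in this integral by $(\pi^{*}\nu^{*}\mu-\pi^{*}\theta_D-dd^c\pi^{*}\psi)^{m-p}$ for a suitable smooth representative $\theta_D$ of $[D]$ and a smooth function $\psi$ on $M'$; expanding, applying Stokes' formula to the terms containing the bounded primitive $\tilde\alpha$, and controlling the singular divisorial contribution by approximation through K\"ahler currents with analytic singularities (as provided by \cite{BEGZ10}), should lead to the isoperimetric-type inequality
$$
\|\phi\|_{\tilde\omega}^{2}\le C\,\|d\phi\|_{\tilde\omega}\,\|\phi\|_{\tilde\omega}.
$$
Combined with the K\"ahler identity $\Delta_d=2\Delta_{\bar\partial}$ and the vanishing $\bar\partial^{*}|_{\Omega^{p,0}}=0$, which yield $\|d\phi\|^{2}\le 2\langle\Delta_{\bar\partial,p,0}\phi,\phi\rangle$, one concludes
$$
\|\phi\|^{2}\le 2C^{2}\,\langle\Delta_{\bar\partial,p,0}\phi,\phi\rangle,
$$
so $0\notin\sigma(\Delta_{\bar\partial,p,0})$ for all $p\le m-1$. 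For the $(m,0)$ case, the Lefschetz identity degenerates, but closed range of $\Delta_{\bar\partial,m,0}$ should follow from the closed range of $\bar\partial\colon L^{2}\Omega^{m,0}\to L^{2}\Omega^{m,1}$, which I would obtain by transferring the just established spectral gap at the $(m-1,0)$-level through the conjugation/duality isomorphism exchanging $(p,0)$ with $(0,p)$ bidegrees.

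The hard part, and main obstacle, is Step 2: $\omega$ cannot be expected to have a bounded primitive on $\tilde{M'}$ unless $M'$ is actually K\"ahler hyperbolic, so the Gromov trick must be run with the merely semipositive form $\nu^{*}\mu$ (which does have a bounded primitive) playing the role of the ``hyperbolic'' form, while the $L^{2}$ norms and the Kähler identities are taken with respect to $\omega$. The cohomological discrepancy is precisely the effective divisor $D$, and all the technical effort must go into showing that its singular contribution does not spoil the Stokes integration --- this is where the BEGZ Monge--Amp\`ere machinery and singular regularization theory (and the finite cover ideas behind Proposition \ref{prop:modiftoboundedprimitive}) must enter in an essential way.
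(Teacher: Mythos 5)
Your Step 1 (the modification, via a K\"ahler current in $[\mu]$ and the decomposition $\nu^*[\mu]=[\omega]+[D]$) matches the paper, and your treatment of the $(m,0)$ closed-range claim via unitary equivalence of Laplacians is essentially the paper's argument (though the relevant equivalence is the Hodge star exchanging $\Delta_{\overline\partial,m-1,0}$ with $\Delta_{\overline\partial,m,1}$, not conjugation between $(p,0)$ and $(0,p)$). The genuine gap is in your Step 2. You aim at the inequality $\|\phi\|_{\tilde\omega}^2\le C\,\|d\phi\|_{\tilde\omega}\|\phi\|_{\tilde\omega}$ for \emph{every} compactly supported $(p,0)$-form, by writing $\tilde\omega^{m-p}$ in terms of $\pi^*\nu^*\mu$, $\pi^*\theta_D$ and $dd^c\pi^*\psi$ and integrating by parts. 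This cannot close: the class $[D]=\nu^*[\mu]-[\omega]$ has no reason to admit a bounded primitive on $\tilde{M'}$ (if it did, $[\omega]$ itself would, i.e.\ $M'$ would be K\"ahler hyperbolic), and the terms $\int i^{p^2}\phi\wedge\bar\phi\wedge\pi^*\theta_D^k\wedge(\cdots)$ have neither a sign nor a Stokes-accessible structure. More fundamentally, no representative (smooth or with analytic singularities) of the non-K\"ahler class $[\mu']$ can dominate a multiple of $\omega$ near the exceptional locus $E$, since $[\mu']$ restricted to components of $E$ lies in its null locus; so the Gromov trick can only ever bound the \emph{degenerate} quadratic forms $\int i^{p^2}\eta\wedge\bar\eta\wedge\tilde\mu_t^{\,m-p-r}\wedge\tilde\omega^r$ by $\|\eta\|\,\langle\Delta_{\overline\partial,p,0}\eta,\eta\rangle^{1/2}$, never the full $L^2$ norm. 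Regularizing by K\"ahler currents with analytic singularities makes this worse, not better, as those currents blow up exactly on $E_{nK}([\mu])$.

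The paper bridges this degeneracy with two ingredients that are absent from your proposal. First, it solves the family of Monge--Amp\`ere equations $(\mu+t\omega+i\partial\bar\partial\phi_t)^m=e^{\lambda(t)}\omega^m$ and proves \emph{uniform in $t$} higher-order estimates for $\phi_t$ on compacta of $M'\setminus E$, so that $\tilde\mu_t+t\tilde\omega\ge C_1\tilde\omega$ holds on $\tilde M'\setminus\tilde U_\varepsilon$ with $C_1$ independent of small $t$. Second --- and this is the key new idea --- it proves a spectral localization statement (Proposition \ref{prop:normcontrol}, via Lemma \ref{lem:maintool}, using elliptic estimates and Sobolev embedding on translates of a fundamental domain) asserting that any $\eta$ in the spectral subspace $\operatorname{im}(E(1))$ carries at most half of its $L^2$ mass over the preimage of a sufficiently small neighbourhood $U_\varepsilon$ of $E$. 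Only the combination of these two facts lets one pass from the degenerate estimate to $\|\eta\|^2\le K\langle\Delta_{\overline\partial,p,0}\eta,\eta\rangle$ on $\operatorname{im}(E(1))$, after which the bound is extended to the whole domain by splitting $\psi=E(1)\psi+(\psi-E(1)\psi)$. Without a substitute for the non-concentration lemma, your argument would have to prove the coercivity inequality uniformly up to $E$, which is exactly what the non-K\"ahlerness of $[\mu']$ forbids.
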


Observe that, by Example \ref{ex:dimone}, the above theorem in dimension one follows immediately from the work of Gromov, so that in its proof we can restrict our attention to dimension at least two (this will be used in the proof of Lemma \ref{lem:maintool}). We need now some preliminary results. 

\begin{lem}\label{lem:forms} 
Let $(M,\omega)$ be a Hermitian manifold of dimension $m$, let $0 \leq k \leq m$ be an integer, and let respectively $\alpha$ be a $(2m - 2k)$-form and $\beta$ be a $(m-k,0)$-form. Then, we have
$$
|\alpha\wedge\omega^k|_\omega\le\frac{m!}{(m-k)!}\,|\alpha|_\omega,\quad |\beta|_\omega^2\,\frac{\omega^m}{m!}=i^{(m-k)^2}\beta\wedge\bar\beta\wedge\frac{\omega^k}{k!}.
$$
\end{lem}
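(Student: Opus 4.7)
Both identities are pointwise statements on the Hermitian vector space $(T_x^*M,\omega_x)$, so my plan is to fix a point $x\in M$ and choose a $\omega$-unitary coframe $\varphi^1,\dots,\varphi^m$ such that $\omega_x = i\sum_{j=1}^m\varphi^j\wedge\bar\varphi^j$. Under this normalization the monomials $\varphi^I\wedge\bar\varphi^J$ (indexed by strictly increasing multi-indices $I,J$) form an orthogonal basis of the relevant bundles of $(p,q)$-forms with unit norm, and $\omega^m/m!$ is a standard unit-modulus multiple of $\varphi^1\wedge\bar\varphi^1\wedge\cdots\wedge\varphi^m\wedge\bar\varphi^m$. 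All subsequent computations will be carried out in this fixed coframe.

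For the second identity (which is really a form of the Wirtinger identity) I would expand $\beta = \sum_{|I|=m-k}\beta_I\,\varphi^I$, so that $|\beta|_\omega^2 = \sum_I|\beta_I|^2$. In the product $\beta\wedge\bar\beta\wedge\omega^k/k!$, only the diagonal terms $I=J$ can contribute, since each $\varphi^j$ and each $\bar\varphi^j$ must appear exactly once in order for the resulting top form to be nonzero. For such a diagonal term, with complement $K=\{1,\dots,m\}\setminus I$ of cardinality $k$, the only summand of $\omega^k/k!$ which survives is $i^k$ times the wedge of the basic $(1,1)$-forms indexed by $K$. A careful reordering of the resulting monomial into the standard ordering underlying the volume form then produces precisely the sign factor $i^{(m-k)^2}$ demanded by the statement (a quick sanity check with $k=0$ and $\beta=\varphi^1\wedge\cdots\wedge\varphi^m$ confirms the exponent). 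Summing over $I$ and comparing with $\omega^m/m!$ yields the identity.

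For the first inequality I would likewise expand $\alpha = \sum_{I,J}\alpha_{IJ}\,\varphi^I\wedge\bar\varphi^J$ with $|I|+|J|=2m-2k$. Since $\alpha\wedge\omega^k$ is a top form, the same counting argument forces only the bi-diagonal components with $|I|=|J|=m-k$ and $I=J$ to contribute. Collecting these gives
$$
\alpha\wedge\omega^k = k!\sum_{|I|=m-k}\varepsilon_I\,\alpha_{II}\cdot\frac{\omega^m}{m!},
$$
with signs $\varepsilon_I=\pm 1$. Since each coefficient $\alpha_{II}$ is a component of $\alpha$ in an orthonormal expansion, we have $|\alpha_{II}|\le|\alpha|_\omega$; combined with the count $\#\{I:|I|=m-k\}=\binom{m}{m-k}$, the triangle inequality yields
$$
|\alpha\wedge\omega^k|_\omega \le k!\binom{m}{m-k}\,|\alpha|_\omega = \frac{m!}{(m-k)!}\,|\alpha|_\omega,
$$
as required. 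The only genuine obstacle in the whole proof is thus the sign and permutation bookkeeping in the Wirtinger computation; once that is nailed down, the first inequality drops out automatically by reading off diagonal coefficients and applying the triangle inequality.
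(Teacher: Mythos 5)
Your proof is correct, and for the second identity it coincides with the paper's, which simply declares the Wirtinger-type equality to be ``a straightforward pointwise computation in unitary coordinates'' --- you have supplied exactly that computation, with the right sign bookkeeping (indeed $i^{p^2}\varphi^I\wedge\bar\varphi^I=\prod_{j\in I}(i\varphi^j\wedge\bar\varphi^j)$ for $|I|=p$). For the first inequality your route is mildly different from the paper's: they argue coordinate-freely, first projecting $\alpha$ onto its $(m-k,m-k)$-component $\alpha'$, then using the Lefschetz identity $\omega^k=\frac{k!}{(m-k)!}\,*\omega^{m-k}$ to write $\alpha\wedge\omega^k$ as $\frac{k!}{m!(m-k)!}\langle\alpha',\omega^{m-k}\rangle_\omega\,\omega^m$ and concluding by Cauchy--Schwarz, whereas you expand in a unitary coframe, observe that only the bi-diagonal coefficients $\alpha_{II}$ survive, and apply the triangle inequality together with $|\alpha_{II}|\le|\alpha|_\omega$ and the count $\binom{m}{m-k}$. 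Both arrive at the same constant $k!\binom{m}{k}=\frac{m!}{(m-k)!}$; the only thing you should fix is the claim $\varepsilon_I=\pm1$: the surviving factor is the unimodular constant $i^{-(m-k)^2}$ (equal to $\pm i$ when $m-k$ is odd), which is harmless for the triangle inequality but is not a sign.
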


\begin{proof}
Call $\alpha'$ the bidegree $(m-k,m-k)$ component of $\alpha$. Observe first that
$$
\alpha\wedge\omega^k=\alpha'\wedge\omega^k=\frac{k!}{(m-k)!}\,\alpha'\wedge *\omega^{m-k}=\frac{k!}{m!(m-k)!}\,\langle\alpha',\omega^{m-k}\rangle_\omega\,\omega^m.
$$
By Cauchy--Schwarz we have
$$
|\alpha\wedge\omega^k|_\omega=\frac{k!}{(m-k)!}\,|\langle\alpha',\omega^{m-k}\rangle_\omega|\le k!\,|\alpha'|_\omega\left|\frac{\omega^{m-k}}{(m-k)!}\right|_\omega\le\frac{m!}{(m-k)!}\,|\alpha|_\omega,
$$
since $|\omega^k|_\omega=k!\binom{m}{k}$, and obviously $|\alpha'|_\omega\le|\alpha|_\omega$.

The equality for the squared norm of $\beta$ times the volume form is just a straightforward pointwise computation in (unitary) coordinates.
\end{proof}

\begin{prop}\label{prop:normcontrol}
On a compact Kähler manifold $(M,\omega)$ with Kähler universal cover $(\tilde M,\tilde\omega)\to (M,\omega)$, given any $p,q\in\{0,...,m\}$, let 
$$
\Delta_{\overline{\partial},p,q}\colon L^2\Omega^{p,q}(\tilde{M},\tilde{\omega})\rightarrow L^2\Omega^{p,q}(\tilde{M},\tilde{\omega})
$$ 
be the closure of  $\Delta_{\overline{\partial},p,q}\colon\Omega_c^{p,q}(\tilde{M})\rightarrow \Omega_c^{p,q}(\tilde{M})$ and let $\{E(\lambda)\}_{\lambda}$ be its spectral resolution. For any fixed $\lambda_0 > 0$, there exists $\varepsilon_0(\lambda_0)>0$ such that if $0 < \varepsilon < \varepsilon_0(\lambda_0),$   and $U_{\varepsilon}$ is an  open set   in $M$ with 
$$
\operatorname{vol}_\omega(U_\varepsilon)<\varepsilon,
$$
we have:
$$
\int_{\tilde{U_{\varepsilon}}}|\eta|^2_{\tilde{\omega}}\dvol_{\tilde{\omega}}\leq \int_{{\tilde{M}\setminus \tilde{U_{\varepsilon}}}}|\eta|^2_{\tilde{\omega}}\dvol_{\tilde{\omega}}
$$
for all $\eta\in \im(E(\lambda_0))$, where $\tilde{U_{\varepsilon}}$ is the preimage of $U_{\varepsilon}$ through $\pi\colon\tilde{M}\rightarrow M$.
\end{prop}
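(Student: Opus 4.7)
Adding $\int_{\tilde U_\varepsilon}|\eta|^2_{\tilde\omega}\,\dvol_{\tilde\omega}$ to both sides, the sought inequality is equivalent to
$$
\int_{\tilde U_\varepsilon}|\eta|_{\tilde\omega}^2\,\dvol_{\tilde\omega} \le \tfrac12\|\eta\|^2_{L^2\Omega^{p,q}(\tilde M,\tilde\omega)},
$$
so my task reduces to controlling the $L^2$ mass of $\eta$ on the pre-image of a small open set by a definite fraction of the total mass, provided $\varepsilon$ is small in terms of $\lambda_0$.

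My strategy is to approximate the spectral truncation by an operator with integral kernel of finite spatial support, along the classical Cheeger--Gromov--Taylor line. More precisely, I would fix an even $\chi\in C_c^\infty(\mathbb R)$ with $\chi\equiv 1$ on $[-\sqrt{\lambda_0},\sqrt{\lambda_0}]$. Since the spectrum of $\sqrt{\Delta_{\overline\partial,p,q}}$ on $\im(E(\lambda_0))$ is contained in $[0,\sqrt{\lambda_0}]$, the spectral theorem yields $\chi\bigl(\sqrt{\Delta_{\overline\partial,p,q}}\bigr)\eta=\eta$. Choosing a standard bump $\rho\in C_c^\infty(\mathbb R)$ with $\rho\equiv 1$ near the origin, I would split
$$
\chi\bigl(\sqrt{\Delta_{\overline\partial,p,q}}\bigr) = A_\tau + E_\tau,\qquad A_\tau:=\frac{1}{\pi}\int_{\mathbb R}\rho(t/\tau)\,\hat\chi(t)\,\cos\bigl(t\sqrt{\Delta_{\overline\partial,p,q}}\bigr)\,dt.
$$
Because $(\tilde M,\tilde\omega)$ is complete, finite propagation speed for $\cos\bigl(t\sqrt{\Delta_{\overline\partial,p,q}}\bigr)$ forces the Schwartz kernel of $A_\tau$ to be supported in $\{d_{\tilde\omega}(x,y)\le 2\tau\}$; meanwhile the Schwartz decay of $\hat\chi$ yields $\|E_\tau\|_{\mathrm{op}}=O_N(\tau^{-N})$ for every $N$.

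The second ingredient is bounded geometry, which $(\tilde M,\tilde\omega)$ enjoys as a Riemannian covering of the compact Hermitian manifold $(M,\omega)$. This yields on the one hand a uniform bound $\sup_x(A_\tau^\ast A_\tau)(x,x)\le C(\tau,\lambda_0)$, hence by Cauchy--Schwarz the pointwise estimate
$$
|A_\tau\eta(x)|^2\le C(\tau,\lambda_0)\,\|\eta\|_{L^2(B_{\tilde\omega}(x,2\tau))}^2,
$$
and on the other hand a uniform upper bound $N(\tau)$ on the number of $\Gamma$-translates of a fundamental domain $F$ meeting a ball of radius $2\tau$. Since $\tilde U_\varepsilon$ is $\Gamma$-invariant and $\operatorname{vol}_{\tilde\omega}(\tilde U_\varepsilon\cap\gamma F) = \operatorname{vol}_\omega(U_\varepsilon)<\varepsilon$ for every $\gamma\in\Gamma$, a Fubini argument gives
$$
\int_{\tilde U_\varepsilon}|A_\tau\eta|^2 \le C(\tau,\lambda_0)\int_{\tilde M}|\eta(y)|^2\,\operatorname{vol}_{\tilde\omega}\bigl(\tilde U_\varepsilon\cap B_{\tilde\omega}(y,2\tau)\bigr)\,dy \le C(\tau,\lambda_0)\,N(\tau)\,\varepsilon\,\|\eta\|_{L^2}^2.
$$

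Combining this with the obvious $\int_{\tilde U_\varepsilon}|E_\tau\eta|^2\le\|E_\tau\|_{\mathrm{op}}^2\|\eta\|_{L^2}^2$ and $|\eta|^2\le 2|A_\tau\eta|^2+2|E_\tau\eta|^2$, I would conclude by first choosing $\tau=\tau(\lambda_0)$ large enough so that $2\|E_\tau\|_{\mathrm{op}}^2<\tfrac14$, and then setting $\varepsilon_0(\lambda_0):=1/\bigl(8\,C(\tau,\lambda_0)\,N(\tau)\bigr)$; for any $\varepsilon<\varepsilon_0(\lambda_0)$ this forces $\int_{\tilde U_\varepsilon}|\eta|^2\le\tfrac12\|\eta\|_{L^2}^2$, as desired. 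I expect the main technical step to be the uniform on-diagonal control of $A_\tau^\ast A_\tau$: this is a standard but not entirely trivial consequence of bounded geometry and the functional calculus on vector bundles, and requires a modicum of care given that we are working with the Hodge--Kodaira Laplacian on forms rather than with a scalar operator.
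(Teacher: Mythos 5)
Your proof is correct, but it takes a genuinely different route from the paper's. The paper deduces the proposition from Lemma \ref{lem:maintool}: covering $M$ by finitely many sets $\pi(B_i)$ with $B_i$ relatively compact in a fundamental domain $D_i$, it fixes $\eta\in\im(E(\lambda_0))$ and splits the deck group into the translates $\gamma D$ on which $\|\Delta_{\overline{\partial},p,q}\eta\|^2_{L^2(\gamma D)}\le 4N\lambda_0^2\|\eta\|^2_{L^2(\gamma D)}$ and the rest; a Chebyshev count bounds the mass of $\eta$ carried by the bad translates by $\frac{1}{4N}\|\eta\|^2$, while on the good translates one pulls back to the fixed domain, applies interior elliptic estimates and the Sobolev embedding $W^{1,2}\hookrightarrow L^{\frac{2m}{m-1}}$, and concludes by H\"older against the small volume of $B\cap\tilde{U_\varepsilon}$. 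You replace this $\eta$-dependent pigeonhole over $\pi_1(M)$ by the Cheeger--Gromov--Taylor localization of the spectral projector through finite propagation speed, and you replace the local elliptic-plus-Sobolev step by a uniform on-diagonal kernel bound; the periodicity of $\tilde{U_\varepsilon}$ then enters only through the elementary count $\vol_{\tilde\omega}\bigl(\tilde{U_\varepsilon}\cap B(y,2\tau)\bigr)\le N(\tau)\,\varepsilon$. What each buys: your argument is uniform in $\eta$, works verbatim for $m=1$ (the paper's Sobolev exponent forces $m\ge 2$ there, and dimension one is disposed of separately via Gromov's original result), and is standard spectral-geometric technology; the paper's is more elementary, needing only interior elliptic estimates on a fixed compact set and no wave-equation functional calculus. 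As you anticipate, the step to write out with care is the uniform bound on $\int_{\tilde M}|A_\tau(x,y)|^2\,\dvol_{\tilde\omega}(y)$ (note that it is $A_\tau A_\tau^*$, not $A_\tau^*A_\tau$, whose diagonal appears in the Cauchy--Schwarz step, though the two agree since $A_\tau$ is self-adjoint when $\rho$ is chosen even); it follows from writing $A_\tau=h_\tau\bigl(\sqrt{\Delta_{\overline{\partial},p,q}}\bigr)$ with $h_\tau$ rapidly decreasing, together with uniform local Sobolev embeddings for $\mathcal{D}(\Delta^k_{\overline{\partial},p,q})$ with $2k>m$, which are available on a cocompact covering.
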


The above proposition will follow easily from the next. 

\begin{lem}
\label{lem:maintool}
Let $D_1,...,D_N$ and $B_1,...,B_N$ be  finite sequences of open subsets of $\tilde{M}$ such that
\begin{enumerate}
\item $D_i$ is a fundamental domain of $\pi\colon\tilde{M}\rightarrow M$ for each $i=1,...,N$;
\item $B_i$ is a relatively compact open subset with smooth boundary of $\tilde{M}$ such that $\overline{B_i}\subset D_i$, for each $i=1,...,N$;
\item $\{\pi(B_i)\}_{1,...,N}$ is an open cover of $M$.
\end{enumerate}
Given $\lambda_0 > 0$, there exists $\varepsilon_0(\lambda_0)>0$ such that if $0 < \varepsilon < \varepsilon_0(\lambda_0)$ and $U_{\varepsilon}$ is an  open set in $M$ satisfying  
$$
\operatorname{vol}_\omega(U_\varepsilon)<\varepsilon,
$$
then we have
$$
\int_{\pi_1(M).B_i\cap \tilde{U_{\varepsilon}}}|\eta|_{\tilde{\omega}}^2\dvol_{\tilde\omega}\leq \frac{1}{2N}\int_{\tilde{M}}|\eta|^2_{\tilde{\omega}}\dvol_{\tilde{\omega}},
$$ 
for any $\eta\in \im(E(\lambda_0))$ and $i=1,...,N$. 
\end{lem}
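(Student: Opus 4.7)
My plan is to combine a localized elliptic $L^{\infty}$-bound with the global spectral estimate $\|\Delta_{\overline\partial,p,q}^{j}\eta\|_{L^2(\tilde M,\tilde\omega)}\le\lambda_0^{j}\|\eta\|_{L^2(\tilde M,\tilde\omega)}$ available for $\eta\in\operatorname{im}(E(\lambda_0))$, and then leverage the $\pi_1(M)$-equivariance of the whole set-up to sum cleanly over the (infinite) orbit $\pi_1(M).B_i$ without blowing up. The key point is that the covering map $\pi$ is a local isometry, so every estimate that holds on $B_i\subset D_i$ holds on $\gamma B_i\subset\gamma D_i$ with \emph{the same} constant.

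Concretely, since $E(\lambda_0)$ commutes with $\Delta_{\overline\partial,p,q}$, the functional calculus gives $\eta\in\operatorname{Dom}(\Delta^{j})$ for every integer $j\ge0$, with
$$\|\Delta^{j}\eta\|_{L^2(\tilde M,\tilde\omega)}^{2}=\int_{0}^{\lambda_0}\lambda^{2j}\,d\|E(\lambda)\eta\|^{2}\le\lambda_0^{2j}\|\eta\|_{L^2(\tilde M,\tilde\omega)}^{2}.$$
Since $\overline{B_i}$ is compact in the open set $D_i$, I pick an intermediate open $B_i'$ with $\overline{B_i}\subset B_i'$ and $\overline{B_i'}\subset D_i$. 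Iterating Gårding's inequality for the second-order elliptic operator $\Delta_{\overline\partial,p,q}$ acting on $(p,q)$-forms, and then invoking the Sobolev embedding $W^{2j,2}\hookrightarrow L^{\infty}$ which holds as soon as $2j>m$ (the real dimension of $\tilde M$ being $2m$), I obtain a constant $K_i=K_i(m,p,q,B_i,B_i',D_i)>0$ such that
$$\|\eta\|_{L^\infty(B_i)}^{2}\le K_i\sum_{j'=0}^{j}\|\Delta^{j'}\eta\|_{L^2(B_i')}^{2},\qquad j:=\lfloor m/2\rfloor+1.$$
Because $\pi_1(M)$ acts by holomorphic isometries and the action commutes with $\Delta_{\overline\partial,p,q}$, exactly the same inequality, with the same $K_i$, holds on every translate $\gamma B_i\subset\gamma D_i$.

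Next, since $\tilde U_\varepsilon=\pi^{-1}(U_\varepsilon)$ is $\pi_1(M)$-invariant and $\pi|_{D_i}$ is injective, I get $\operatorname{vol}_{\tilde\omega}(\gamma B_i\cap\tilde U_\varepsilon)=\operatorname{vol}_{\tilde\omega}(B_i\cap\tilde U_\varepsilon)\le\operatorname{vol}_{\omega}(U_\varepsilon)<\varepsilon$ for every $\gamma$. Combining the pointwise bound with the volume bound, and summing over $\gamma\in\pi_1(M)$ using the (essentially) pairwise disjointness of the $\gamma D_i$, yields
\begin{align*}
\int_{\pi_1(M).B_i\cap\tilde U_\varepsilon}|\eta|_{\tilde\omega}^{2}\,\dvol_{\tilde\omega}
&\le\sum_{\gamma\in\pi_1(M)}\|\eta\|_{L^\infty(\gamma B_i)}^{2}\cdot\operatorname{vol}_{\tilde\omega}(\gamma B_i\cap\tilde U_\varepsilon)\\
&\le K_i\,\varepsilon\sum_{j'=0}^{j}\sum_{\gamma\in\pi_1(M)}\|\Delta^{j'}\eta\|_{L^2(\gamma D_i)}^{2}\\
&=K_i\,\varepsilon\sum_{j'=0}^{j}\|\Delta^{j'}\eta\|_{L^2(\tilde M,\tilde\omega)}^{2}\\
&\le K_i\,\varepsilon\,\Bigl(\textstyle\sum_{j'=0}^{j}\lambda_0^{2j'}\Bigr)\|\eta\|_{L^2(\tilde M,\tilde\omega)}^{2}.
\end{align*}
Choosing $\varepsilon_0(\lambda_0):=\min_{1\le i\le N}\bigl(2NK_i\sum_{j'=0}^{j}\lambda_0^{2j'}\bigr)^{-1}$ then gives the required inequality for every $0<\varepsilon<\varepsilon_0(\lambda_0)$.

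The one genuinely delicate ingredient is the $\pi_1(M)$-uniform local elliptic/Sobolev estimate: constants in Gårding's inequality and in the Sobolev embedding do depend on the geometry of the domain pair, but here every $\gamma\in\pi_1(M)$ carries $(B_i,B_i',D_i)$ \emph{isometrically} onto $(\gamma B_i,\gamma B_i',\gamma D_i)$, so the constants are genuinely $\Gamma$-invariant. Without this invariance the summation in the last display would diverge, and the whole bootstrap would collapse; with it, the argument reduces to juxtaposing a spectral bound, a local elliptic bound, and a trivial $L^{\infty}\cdot\operatorname{vol}$ inequality, which is precisely how a frequency-localization $\eta\in\operatorname{im}(E(\lambda_0))$ gets converted into small mass on the preimage of a small set.
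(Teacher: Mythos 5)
Your argument is correct, but it is organized differently from the paper's proof, and the difference is worth recording. The paper works with a single power of the Laplacian: for $\eta\in\im(E(\lambda_0))$ it only uses $\|\Delta_{\overline\partial,p,q}\eta\|\le\lambda_0\|\eta\|$, combines the interior $W^{2,2}$ elliptic estimate with the Sobolev embedding $W^{1,2}\hookrightarrow L^{2m/(m-1)}$ (which forces $m\ge 2$; the one-dimensional case is delegated to Gromov) and H\"older's inequality with the factor $\vol_{\tilde\omega}(B\cap\tilde U_\varepsilon)^{1/m}$, and ---crucially--- it splits $\pi_1(M)$ into the set $S(\eta)$ of \lq\lq good\rq\rq{} translates, where $\int_{\gamma D}|\Delta\eta|^2\le 4N\lambda_0^2\int_{\gamma D}|\eta|^2$, and its complement, whose total contribution is absorbed directly by the spectral bound; the two contributions each give $\tfrac1{4N}\|\eta\|^2$. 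You instead exploit the full functional calculus to get $\|\Delta^{j'}\eta\|\le\lambda_0^{j'}\|\eta\|$ for all $j'$, upgrade the local estimate to an $L^\infty$ bound via $W^{2j,2}\hookrightarrow L^\infty$ with $2j>m$, and then sum over the whole orbit in one stroke, using the disjointness of the translates $\gamma D_i$ to convert $\sum_\gamma\|\Delta^{j'}\eta\|^2_{L^2(\gamma D_i)}$ into the global norm; no good/bad decomposition is needed, the constants transfer to every translate by equivariance exactly as you say, and the argument works uniformly in all dimensions (including $m=1$). What each route buys: yours is cleaner (no $4N$ bookkeeping, no case distinction, no dimension restriction) and is in fact close in spirit to the authors' earlier proof using $\Delta^\ell$ with $2\ell>m$ and sup-norm bounds, but even that version retains the $S(\eta)$ splitting which you show to be dispensable; the paper's proof, on the other hand, only requires $\eta\in\mathcal D(\Delta_{\overline\partial,p,q})$ rather than membership in the domains of all powers, which is the reason for preferring the weaker Sobolev exponent there. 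One cosmetic remark: iterating the interior (G\r{a}rding) estimate $j$ times requires a chain of nested relatively compact intermediate domains between $B_i$ and $D_i$, not just the single $B_i'$; since $\overline{B_i}\subset D_i$ is compact such a chain always exists, so this does not affect the validity of your argument.
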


For the standard construction of such fundamental domains see for instance \cite[\S 3.6.1]{MM07}.

\begin{proof}
Throughout the proof let us fix arbitrarily a pair $B_i\subset D_i$ that for simplicity we will denote with $B\subset D$. Let $\eta\in \im(E(\lambda_0))$ be fixed. Since $\eta\in \im(E(\lambda_0))$ we have that  
$$
\int_{\tilde{M}}|\Delta_{\overline{\partial},p,q}\eta|^2_{\tilde{\omega}}\dvol_{\tilde{\omega}}\leq \lambda_0^{2}\int_{\tilde{M}}|\eta|^2_{\tilde{\omega}}\dvol_{\tilde{\omega}}.
$$
Therefore, 
$$\sum_{\gamma\in \pi_1(M)}\int_{\gamma D}(|\Delta_{\overline{\partial},p,q}\eta|^2_{\tilde{\omega}}-\lambda_0^{2}|\eta|^2_{\tilde{\omega}})\dvol_{\tilde{\omega}}\leq 0
$$ 
and thus there exists at least an element $\overline{\gamma}\in \pi_1(M)$ such that 
$$
\int_{\overline{\gamma} D}(|\Delta_{\overline{\partial},p,q}\eta|^2_{\tilde{\omega}}-\lambda_0^{2}|\eta|^2_{\tilde{\omega}})\dvol_{\tilde{\omega}}\leq 0.
$$ 
Let us define $S(\eta)\subset \pi_1(M)$ as 
$$
S(\eta):=\left\{\gamma \in \pi_1(M): \int_{\gamma D}|\Delta_{\overline{\partial},p,q}\eta|^2_{\tilde{\omega}}\dvol_{\tilde{\omega}}\leq 4N\lambda_0^{2}\int_{\gamma D}|\eta|_{\tilde{\omega}}^2\dvol_{\tilde{\omega}}\right\}.
$$ 
We have 
\begin{multline*}
\int_{\tilde{M}}|\eta|^2_{\tilde{\omega}}\dvol_{\tilde{\omega}}
\geq \lambda_0^{-2}\int_{\tilde{M}}|\Delta_{\overline{\partial},p,q}\eta|^2_{\tilde{\omega}}\dvol_{\tilde{\omega}} \\
\geq \sum_{\gamma \notin S(\eta)} \lambda_0^{-2}\int_{\gamma D}|\Delta_{\overline{\partial},p,q}\eta|^2_{\tilde{\omega}}\dvol_{\tilde{\omega}}
\geq  \sum_{\gamma \notin S(\eta)} 4N\int_{\gamma D}|\eta|^2_{\tilde{\omega}}\dvol_{\tilde{\omega}}.
\end{multline*}
Thus, we can deduce that, for any $\varepsilon>0$,
\begin{equation}
\label{noS}
\sum_{\gamma \notin S(\eta)} \int_{\gamma D\cap \tilde{U_{\varepsilon}}}|\eta|^2_{\tilde{\omega}}\dvol_{\tilde{\omega}}\leq \frac{1}{4N}\int_{\tilde{M}}|\eta|^2_{\tilde{\omega}}\dvol_{\tilde{\omega}}.
\end{equation}
Let us consider now any $\gamma\in S(\eta)$ and let us label with $\eta_{\gamma}$ the pullback of $\eta$ through $\gamma^{-1}$, that is $\eta_{\gamma}:=(\gamma^{-1})^*\eta$.
Since $\gamma^{-1}\colon\tilde{M}\rightarrow \tilde{M}$ is both a biholomorphism and an isometry we have 
\begin{multline}
\label{control}
\int_D|\Delta_{\overline{\partial},p,q}\eta_{\gamma}|_{\tilde{\omega}}^2\dvol_{\tilde{\omega}}
=\int_{\gamma D}|\Delta_{\overline{\partial},p,q}\eta|_{\tilde{\omega}}^2\dvol_{\tilde{\omega}} \\
\leq 4N\lambda_0^{2}\int_{\gamma D}|\eta|_{\tilde{\omega}}^2\dvol_{\tilde{\omega}}
=4N\lambda_0^{2}\int_D|\eta_{\gamma}|_{\tilde{\omega}}^2\dvol_{\tilde{\omega}}.
\end{multline}
Thanks to the elliptic estimates, see \textsl{e.g.} \cite[Lemma 1.1.17]{Les97}, we know that there exists a positive constant $C$ such that for any  $\psi\in \mathcal{D}(\Delta_{\overline{\partial},p,q})$ we have $\psi|_B\in W^{2,2}\Omega^{p,q}(B,\tilde{\omega})$ and 
$$
\|\psi\|_{W^{2,2}\Omega^{p,q}(B,\tilde{\omega})}\leq C(\|\psi\|_{L^2\Omega^{p,q}(D,\tilde{\omega})}+\|\Delta_{\overline{\partial},p,q}\psi\|_{L^2\Omega^{p,q}(D,\tilde{\omega})}).
$$ 
Thanks to the Sobolev embedding theorem (see for instance \cite{Aub98}), since $m\ge 2$, we have a continuous inclusion 
$$
W^{1,2}\Omega^{p,q}(B,\tilde{\omega})\hookrightarrow L^{\frac{2m}{m-1}}\Omega^{p,q}(B,\tilde{\omega}),
$$ 
and hence we can deduce that there exists a possibly new positive constant ---which we still call $C$--- such that for any $\psi\in \mathcal{D}(\Delta_{\overline{\partial},p,q})$ it holds
\begin{equation}
\label{Sob}
\|\psi\|_{L^{\frac{2m}{m-1}}\Omega^{p,q}(B,\tilde{\omega})}\leq C(\|\psi\|_{L^2\Omega^{p,q}(D,\tilde{\omega})}+\|\Delta_{\overline{\partial},p,q}\psi\|_{L^2\Omega^{p,q}(D,\tilde{\omega})}).
\end{equation}
Thus, for $\gamma\in S(\eta)$ we get, thanks to \eqref{control} and \eqref{Sob},
\begin{multline*}
\|\eta_{\gamma}\|_{L^{\frac{2m}{m-1}}\Omega^{p,0}(B,\tilde{\omega})}
\leq C(\|\eta_{\gamma}\|_{L^2\Omega^{p,q}(D,\tilde{\omega})}+\|\Delta_{\overline{\partial},p,q}\eta_{\gamma}\|_{L^2\Omega^{p,q}(D,\tilde{\omega})}) \\ 
\leq  C(1+2\sqrt{N}\lambda_0)\|\eta_{\gamma}\|_{L^2\Omega^{p,q}(D,\tilde{\omega})},
\end{multline*} 
and therefore 
\begin{equation}
\label{Sobolev}
\begin{aligned}
\int_{B\cap \tilde{U_{\varepsilon}}}|\eta_{\gamma}|^2_{\tilde{\omega}}\dvol_{\tilde{\omega}} & \leq \left(\int_{B\cap \tilde{U_{\varepsilon}}}\dvol_{\tilde{\omega}}\right)^{\frac{1}{m}}\left(\int_{B\cap \tilde{U_{\varepsilon}}}(|\eta_{\gamma}|^2_{\tilde{\omega}})^{\frac{m}{m-1}}\dvol_{\tilde{\omega}}\right)^{\frac{m-1}{m}} \\
& = \bigl(\vol_{\tilde{\omega}}(B\cap \tilde{U_{\varepsilon}})\bigr)^{\frac{1}{m}}\left(\int_{B\cap \tilde{U_{\varepsilon}}}|\eta_{\gamma}|_{\tilde{\omega}}^{\frac{2m}{m-1}}\dvol_{\tilde{\omega}}\right)^{\frac{m-1}{m}} \\
& =\bigl(\vol_{\tilde{\omega}}(B\cap \tilde{U_{\varepsilon}})\bigr)^{\frac{1}{m}}\|\eta_{\gamma}\|^2_{L^{\frac{2m}{m-1}}\Omega^{p,q}(B\cap \tilde{U_{\varepsilon}},\tilde{\omega})} \\
& \leq \bigl(\vol_{\tilde{\omega}}(B\cap \tilde{U_{\varepsilon}})\bigr)^{\frac{1}{m}}C^2(1+2\sqrt{N}\lambda_0)^2\|\eta_{\gamma}\|_{L^2\Omega^{p,q}(D,\tilde{\omega})}^2 \\
& =\bigl(\vol_{\tilde{\omega}}(B\cap \tilde{U_{\varepsilon}})\bigr)^{\frac{1}{m}}C^2(1+2\sqrt{N}\lambda_0)^2\int_{D}|\eta_{\gamma}|^2_{\tilde{\omega}}\dvol_{\tilde{\omega}}.
\end{aligned}
\end{equation}
Finally let us pick $\varepsilon$ in such a way that 
$$
\bigl(\vol_{\tilde{\omega}}(B\cap \tilde{U_{\varepsilon}})\bigr)^{\frac{1}{m}}C^2(1+2\sqrt{N}\lambda_0)^2<\frac{1}{4N},
$$ 
in order to obtain
$$
\begin{aligned}
\int_{\pi_1(M).B\cap \tilde{U_{\varepsilon}}}|\eta|^2_{\tilde{\omega}}\dvol_{\tilde{\omega}} & =\sum_{\gamma \in S(\eta)}\int_{\gamma B\cap \tilde{U_{\varepsilon}}}|\eta|^2_{\tilde{\omega}}\dvol_{\tilde{\omega}}+\sum_{\gamma \notin S(\eta)}\int_{\gamma B\cap \tilde{U_{\varepsilon}}}|\eta|^2_{\tilde{\omega}}\dvol_{\tilde{\omega}} \\
\textrm{(by \eqref{noS})} & \leq \sum_{\gamma \in S(\eta)}\int_{\gamma B\cap \tilde{U_{\varepsilon}}}|\eta|^2_{\tilde{\omega}}\dvol_{\tilde{\omega}}+\frac{1}{4N}\int_{\tilde{M}}|\eta|^2_{\tilde{\omega}}\dvol_{\tilde{\omega}} \\
& = \sum_{\gamma \in S(\eta)}\int_{\gamma (B\cap \tilde{U_{\varepsilon}})}|\eta|^2_{\tilde{\omega}}\dvol_{\tilde{\omega}}+\frac{1}{4N}\int_{\tilde{M}}|\eta|^2_{\tilde{\omega}}\dvol_{\tilde{\omega}} \\
& = \sum_{\gamma \in S(\eta)}\int_{B\cap \tilde{U_{\varepsilon}}}|\eta_{\gamma}|^2_{\tilde{\omega}}\dvol_{\tilde{\omega}}+\frac{1}{4N}\int_{\tilde{M}}|\eta|^2_{\tilde{\omega}}\dvol_{\tilde{\omega}} \\ 
\end{aligned}
$$
$$
\begin{aligned}
\quad \textrm{(by \eqref{Sobolev})} & \leq \sum_{\gamma \in S(\eta)}\frac{1}{4N}\int_{D}|\eta_{\gamma}|^2_{\tilde{\omega}}\dvol_{\tilde{\omega}}+\frac{1}{4N}\int_{\tilde{M}}|\eta|^2_{\tilde{\omega}}\dvol_{\tilde{\omega}} \\
& = \sum_{\gamma \in S(\eta)}\frac{1}{4N}\int_{\gamma D}|\eta|^2_{\tilde{\omega}}\dvol_{\tilde{\omega}}+\frac{1}{4N}\int_{\tilde{M}}|\eta|^2_{\tilde{\omega}}\dvol_{\tilde{\omega}} \\
& \leq \frac{1}{4N}\int_{\tilde{M}}|\eta|^2_{\tilde{\omega}}\dvol_{\tilde{\omega}}+\frac{1}{4N}\int_{\tilde{M}}|\eta|^2_{\tilde{\omega}}\dvol_{\tilde{\omega}} \\
& =\frac{1}{2N}\int_{\tilde{M}}|\eta|^2_{\tilde{\omega}}\dvol_{\tilde{\omega}}.
\end{aligned}
$$
Repeating the above procedure for each pair $B_i\subset D_i$ and choosing $\varepsilon>0$ in such a way that 
$$
\bigl(\vol_{\tilde{\omega}}(B_i\cap \tilde{U_{\varepsilon}})\bigr)^{\frac{1}{m}}C^2(1+2\sqrt{N}\lambda_0)^2<\frac{1}{4N}, \quad\textrm{for $i=1,\dots,N$}, 
$$
we can conclude that for any arbitrarily fixed $\lambda_0>0$ there exists $\varepsilon_0(\lambda_0)>0$ such that if $0 < \varepsilon < \varepsilon_0(\lambda_0)$ and $U_{\varepsilon}$ is an  open set in $M$ satisfying  
$$
\operatorname{vol}_\omega(U_\varepsilon)<\varepsilon,
$$
then we have
$$
\int_{\pi_1(M).B_i\cap \tilde{U_{\varepsilon}}}|\eta|_{\tilde{\omega}}^2\dvol_{\omega}\leq \frac{1}{2N}\int_{\tilde{M}}|\eta|^2_{\tilde{\omega}}\dvol_{\tilde{\omega}},
$$ 
for any $\eta\in \im(E(\lambda_0))$ and $i=1,...,N$, as desired.
\end{proof}

With Lemma \ref{lem:maintool} at our disposal, we can now prove Proposition \ref{prop:normcontrol}.

\begin{proof}[Proof of Proposition \ref{prop:normcontrol}]
We have
\begin{multline*}
\int_{\tilde{U_{\varepsilon}}}|\eta|^2_{\tilde{\omega}}\dvol_{\tilde{\omega}}\leq \sum_{i=1}^N\int_{\pi_1(M).B_i\cap \tilde{U_{\varepsilon}}}|\eta|^2_{\tilde{\omega}}\dvol_{\tilde{\omega}} \\ 
\textrm{(by Lemma \eqref{lem:maintool})}\ \leq  \sum_{i=1}^N\frac{1}{2N}\int_{\tilde{M}}|\eta|^2_{\tilde{\omega}}\dvol_{\tilde{\omega}}= \frac{1}{2}\int_{\tilde{M}}|\eta|^2_{\tilde{\omega}}\dvol_{\tilde{\omega}}.
\end{multline*}
Thus, 
$$
\int_{\tilde{U_{\varepsilon}}}|\eta|^2_{\tilde{\omega}}\dvol_{\tilde{\omega}}\leq \frac{1}{2}\int_{\tilde{M}\setminus \tilde{U_{\varepsilon}}}|\eta|^2_{\tilde{\omega}}\dvol_{\tilde{\omega}}+\frac{1}{2}\int_{\tilde{U_{\varepsilon}}}|\eta|^2_{\tilde{\omega}}\dvol_{\tilde{\omega}}
$$ 
and so we reach the desired conclusion 
$$
\int_{\tilde{U_{\varepsilon}}}|\eta|^2_{\tilde{\omega}}\dvol_{\tilde{\omega}}\leq \int_{\tilde{M}\setminus \tilde{U_{\varepsilon}}}|\eta|^2_{\tilde{\omega}}\dvol_{\tilde{\omega}}.
$$
\end{proof}




We are finally in position to prove Theorem \ref{thm:spectralgap}.

\begin{proof}[Proof of Theorem \ref{thm:spectralgap}]

The first step is to produce the required modification $\nu\colon M'\to M$, for which we proceed as follows. We pick $[\mu]\in\mathcal W_M$ and select a Kähler current $T\in[\mu]$, which exists since $[\mu]$ is big. Now, by the proof of \cite[Proposition 2.3]{Bou04}, or the proof of \cite[Theorem 3.4 and Lemma 3.5]{DP04} there exists a modification $\nu\colon M'\to M$, an effective $\mathbb R$-divisor $E$, and a Kähler form $\omega$ on $M'$ such that the pull-back $\nu^*T$ is cohomologous to $\omega+[E]$, where $[E]$ denotes the current of integration along $E$. Moreover, one has that $E=\nu^{-1}\bigl(\nu(E)\bigr)$, and the restriction $\nu|_{M'\setminus E}$ is a biholomorphism onto $M\setminus\nu(E)$.

Set $\mu' = \nu^*\mu$. Since $[\mu]$ is nef and big, so is $[\mu']$ and $[\mu']\in\mathcal W_{M'}$. From now on we shall be working on $M'$, so for the sake of simplicity let us replace $M$ with $M'$ and $[\mu]$ with $[\mu']$.

Define, for $t\ge 0$, the function
$$
\lambda(t) := \log \frac{\int_{M} (\mu + t\,\omega)^m}{ \int_{M}  {\omega}^m}.
$$
For $0 \leq  t \leq 1$, $\lambda(t)$ is bounded independently of $t$. For any fixed $0\le t\le 1$, we now consider the Monge--Ampère equation 
$$
\bigl(\mu + t\,\omega  + i \partial \bar \partial \phi_t\bigr)^m = e^{\lambda(t)} \,{\omega}^m,
$$ 
where $(\bullet)^m$ denotes the non-pluripolar $m$-th power. Since $[\mu]$ is nef, 
$$
\int_M e^{\lambda(t)} {\omega}^m =  \int_M (\mu + t\,\omega)^m
$$ 
is the volume of the class $[\mu + t\,\omega]$.

By \cite[Theorem 3.1]{BEGZ10}, the above equation has a unique $\mu$-psh solution $\phi_t$  such that 
 $ \max_{M} \phi_t = 0$.  Moreover by \cite{Yau78}, for $t > 0$, $\phi_t$ is smooth on $M$ and $\mu+ t\,\omega +  i\partial \bar \partial \phi_t$ is a K\"ahler form on $M$.
   
Next, given a smooth real closed $(1,1)$-form $\eta$ on $M$, set
$$
V_\eta:=\sup\{\tau\mid\textrm{$\tau$ is $\eta$-psh and $\tau\le 0$}\}.
$$ 
By definition, $\tau$ is $\eta$-psh if $\tau$ is an upper semi-continuous function $\tau\colon M\to [-\infty,+\infty)$ such that $\eta+i\partial\bar\partial\tau$ is a positive $(1,1)$-current. Observe that $V_\eta$ is a $\eta$-psh function and that  $V_{\eta+t\omega}\downarrow V_\eta$ as the positive real parameter $t$ decreases to $0$.

Now, (the proof of) \cite[Theorem 4.1]{BEGZ10} shows that there exists a constant $L>0$ such that  $- L + V_{\mu + t\, \omega} \leq \phi_t \leq 0$, for $0 < t \leq 1$. Moreover, $\phi_t$ is uniformly bounded independently of $t$ on compact sets in $M\setminus E$ since the current $T$ we started with was smooth on the complement of $\nu(E)$.

We can then repeat the proof of \cite[Theorem 5.1]{BEGZ10} to show that for $0 < t < 1$ sufficiently small, the $\omega$-Laplacian of $\phi_t$ is bounded independently of $t$ on compacta of $M\setminus E$. It follows from standard theory of complex Monge--Ampère equations (see \textsl{e.g.} \cite{Siu87}) that, for $0 < \beta < 1$ and $k$ any non negative integer, the $C^{k,\beta}(W)$-norm of $\phi_t$ is bounded independently of $t$ for every open set $W$ relatively compact in $M\setminus E$.  

Now, by Hartogs theorem (see \cite{GZ07}), for a given sequence $\{t_n\}\subset (0,1)$ converging to $0$, up to subsequences, we have that $\phi_{t_n}$ converges in $L^1$ to some $\mu$-plurisubharmonic function $\psi$ with $\max_{M}\psi = 0$. 
On the other hand for each $\{t_n\}\subset (0,1)$ converging to $0$, up to subsequences, $\phi_{t_{n}}$ converges in $C^{k,\beta}(W)$. In particular, the function $\psi$ above is smooth on $M\setminus E$ and solves the corresponding Monge--Ampère equation there. Since $E$ is a pluripolar set, we conclude that $\psi = \phi_0$ and that $\phi_t$ converges to $\phi_0$ in each $C^{k,\beta}(W)$ as above. Observe that, in particular, $\mu + i \partial \bar{\partial} \phi_0$ is a K\"ahler form on $M\setminus E$. 

Next, we are going to show that $0$ is not in the spectrum of the $\tilde\omega$-Laplacian $\Delta_{\bar\partial,p,0}$ on the Kähler universal cover $(\tilde M,\tilde\omega)$ of $(M,\omega)$ for all $0 \leq p \leq m-1$. 

We consider the smooth forms $ {\mu}_t := \mu+i\partial \bar \partial \phi_t$ so that ${\mu}_t + t\,\omega$ is a K\"ahler form on $M$ for all $t > 0$. 
Let $\alpha$ be a smooth bounded primitive of the pull back of $\mu$ to the universal covering $\tilde M$ (which exists by Lemma \ref{bounded} applied to the morphism $\nu\colon M'\to M$), so that $ \alpha_t := \alpha + i  \bar \partial \tilde\phi_t$ is a smooth bounded primitive of $\tilde{\mu_t}=\pi^*\mu_t$, where $\tilde\phi_t=\phi_t\circ\pi$.

Now, let $\eta$ be a smooth $(p,0)$-form with compact support on $(\tilde M,\tilde\omega)$, and let $r$ be an integer with $0 \leq r \leq m-p-1$. We have the following equality of top degree forms:
\begin{multline*}
d(\eta\wedge\overline{\eta}\wedge \alpha_t\wedge \tilde{\mu_t}^{m-p-r-1} \wedge \tilde\omega^r)
 \\ =d(\eta\wedge\overline{\eta})\wedge \alpha_t\wedge \tilde{\mu_t}^{m-p- r - 1} \wedge \tilde\omega^r +\eta\wedge\overline{\eta}\wedge \tilde{\mu_t}^{m-p-r} \wedge \tilde\omega^r  \\
= d\eta\wedge\overline{\eta}\wedge \alpha_t\wedge \tilde{\mu_t}^{m-p-1} \wedge \tilde\omega^r +(-1)^p\eta\wedge d\overline{\eta}\wedge \alpha_t\wedge \tilde{\mu_t}^{m-p- r -1} \wedge \tilde\omega^r \\
+\eta\wedge\overline{\eta}\wedge \tilde{\mu_t}^{m-p-r} \wedge \tilde\omega^r.
\end{multline*}

By Stokes, and using Lemma \ref{lem:forms}, we find: 
\begin{equation}\label{ineq-quasi}
\begin{aligned}
\biggl|\int_{\tilde M} i^{p^2} \eta\wedge\bar\eta & \wedge \tilde \mu_t^{m-p-r} \wedge \tilde\omega^r  \biggr| = \biggl| \int_{\tilde M}i^{p^2} d \eta\wedge\overline{\eta}\wedge \alpha_t\wedge \tilde{\mu_t}^{m-p-r-1}\wedge\tilde{\omega}^r \\ &\qquad\qquad\qquad\qquad\quad+i^{p^2}(-1)^p\eta\wedge d\overline{\eta}\wedge \alpha_t\wedge \tilde{\mu_t}^{m-p-r-1} \wedge \tilde{\omega}^r  \biggr|  \\
& \leq\frac{m!}{(m-r)!}\biggl(  \int_{\tilde{M}}\bigl|d\eta\wedge\overline{\eta}\wedge \alpha_t\wedge \tilde{\mu_t}^{m-p-r-1}\bigr|_{\tilde\omega}\dvol_{\tilde\omega} \\
&\qquad+  \int_{\tilde{M}}\bigl|\eta\wedge d\overline{\eta}\wedge \alpha_t\wedge \tilde{\mu_t}^{m-p-r-1}\bigr|_{\tilde\omega}
\dvol_{\tilde{\omega}} \biggr)\\
& \leq 2 \frac{m!}{(m-r)!} \int_{\tilde{M}}|d\eta|_{\tilde\omega}\,|\overline{\eta}|_{\tilde\omega}\, |\alpha_t|_{\tilde\omega}\,|\tilde{\mu_t}^{m-p-r-1}|_{\tilde\omega}\,\dvol_{\tilde\omega}.
\end{aligned}
\end{equation} 

Summarizing, there exists a positive function $C_r(t)$ defined by
$$
C_r(t) :=2 \sqrt{2}\,\frac{m!}{(m-r)!}  \|\alpha_t\|_{L^{\infty}\Omega^1(\tilde{M},\tilde\omega)}\|\tilde{\mu_t}^{m-p-r-1}\|_{L^{\infty}\Omega^{2(m-p-1)}(\tilde{M},\tilde\omega)}
$$   
such that for each $\eta\in \Omega_c^{p,0}(\tilde M)$ we have 
\begin{multline*}
\left| \int_{\tilde M} i^{p^2} \eta\wedge\overline{\eta}\wedge \tilde{\mu_t}^{m-p-r} \wedge \tilde \omega^r \right| \\ \leq  C_r(t) \,\|\eta\|_{L^2\Omega^{p,0}(\tilde M,\tilde \omega )}\langle \Delta_{\overline{\partial},p,0}\eta,\eta\rangle^{\frac{1}{2}}_{L^2\Omega^{p,0}(\tilde M,\tilde\omega)}.
\end{multline*}
 
Since $(\tilde M,\tilde\omega)$ is complete, we can conclude that for each $\eta\in \mathcal{D}(\Delta_{\overline{\partial},p,0})$ it holds

\begin{multline}
\label{inequalityx}
\left| \int_{\tilde M} i^{p^2} \eta\wedge\overline{\eta}\wedge \tilde{\mu_t}^{m-p-r} \wedge \tilde\omega^r \right| \\
\leq  C_r(t)\, \|\eta\|_{L^2\Omega^{p,0}(\tilde M,\tilde\omega)}\langle \Delta_{\overline{\partial},p,0}\eta,\eta\rangle^{\frac{1}{2}}_{L^2\Omega^{p,0}(\tilde M,\tilde\omega)}.
\end{multline}

\medskip

With the notations of Proposition \ref{prop:normcontrol}, let us now choose $\lambda_0 = 1,$ $0 < \varepsilon < \varepsilon_0(1)$, and for $U_{\varepsilon}$ an open neighbourhood of $E$ with Lebesgue measure smaller then $\varepsilon$. Let $W$ be a relatively compact open set in $M\setminus E$ which contains $M \setminus U_{\varepsilon}$.

Since $\phi_t$ converges to $\phi_0$ in $C^{2,\alpha}(W)$, and $\mu' + i \partial \bar{\partial} \phi_0$ is a K\"ahler form on $M \setminus E$, then there exists a constant $C_1=C_1(\varepsilon)>0$ independent of $t$ such that $\tilde{\mu_t} + t\, \tilde{\omega} \geq C_1\, \tilde\omega$ on $\tilde M\setminus \tilde{U_{\varepsilon}}$, provided $t > 0$ is sufficiently small (here $\tilde{U_{\varepsilon}}$ is the preimage of $U_{\varepsilon}$ by the universal covering map). By Proposition \ref{prop:normcontrol} we have that
$
\int_{\tilde M}|\eta|^2_{\tilde\omega}\dvol_{\tilde\omega}\leq 2  \int_{{\tilde M\setminus \tilde U_{\varepsilon}}}|\eta|^2_{\tilde\omega}\dvol_{\tilde\omega},
$ 
for each $\eta\in \im(E(1))$. 

Therefore, for $\eta \in \im(E(1))$ we get, using Lemma \ref{lem:forms}:
\begin{equation}\label{inequalityz}
\begin{aligned}
\int_{\tilde M}|\eta|^2_{\tilde\omega}\dvol_{\tilde\omega} & \leq 2  \int_{{\tilde M\setminus \tilde U_{\varepsilon}}}|\eta|^2_{\tilde\omega}\dvol_{\tilde\omega} \\  
& = \frac{2}{(m-p)!} \int_{\tilde M\setminus \tilde U_{\varepsilon}} i^{p^2}  \eta \wedge \bar\eta \wedge \tilde\omega^{m-p} \\ 
& \leq \frac{2}{(m-p)!\,C_1^{m-p}} \int_{\tilde M\setminus \tilde U_{\varepsilon}} i^{p^2} \eta \wedge \bar\eta \wedge\bigl(\tilde\mu_t + t\,\tilde\omega\bigr)^{m-p} \\ 
& \leq  \frac{2}{(m-p)!\,C_1^{m-p}}   \int_{\tilde M} i^{p^2}  \eta \wedge \bar\eta \wedge\bigl(\tilde\mu_t + t\, \tilde\omega\bigr)^{m-p}.  
\end{aligned}  
\end{equation}

If we now choose $0 < t \leq 1$, then by inequality (\ref{inequalityx}), we have:
\begin{multline*}
\frac{2}{(m-p)!\,C_1^{m-p}}   \int_{\tilde M} i^{p^2}  \eta \wedge \bar\eta \wedge\bigl(\tilde\mu_t + t\, \tilde\omega\bigr)^{m-p} \\ 
= \left|\frac{2}{(m-p)!\,C_1^{m-p}}   \int_{\tilde M} i^{p^2}  \eta \wedge \bar\eta \wedge\bigl(\tilde\mu_t + t\, \tilde\omega\bigr)^{m-p}\right| \\
\leq \frac{2}{(m-p)!\,C_1^{m-p}} \sum_{r=0}^{m-p-1} t^r \binom{m-p}{r} \left| \int_{\tilde M} i^{p^2}  \eta \wedge \bar{\eta} \wedge \tilde{\mu_t}^{m-p-r} \wedge \tilde\omega^r \right| \\ 
+ \frac{2}{(m-p)!\,C_1^{m-p}}\, t^{m-p} \int_{\tilde M} i^{p^2} \eta \wedge \bar{\eta} \wedge \tilde\omega^{m-p} \\
\leq \frac{2}{(m-p)!\,C_1^{m-p}}\sum_{r=0}^{m-p-1}\binom{m-p}{r}t^r\,C_r(t) \\
\qquad\qquad\times \|\eta\|_{L^2\Omega^{p,0}(\tilde M,\tilde \omega )}\langle \Delta_{\overline{\partial},p,0}\eta,\eta\rangle^{\frac{1}{2}}_{L^2\Omega^{p,0}(\tilde M,\tilde\omega)}  \\ 
 \qquad\qquad\qquad + \frac{2}{(m-p)!\,C_1^{m-p}}\, t^{m-p}  \int_{\tilde M}i^{p^2} \eta \wedge \bar\eta \wedge \tilde\omega^{m-p} \\
 = \frac{2}{(m-p)!\,C_1^{m-p}}\sum_{r=0}^{m-p-1}\binom{m-p}{r}t^r\,C_r(t) \\
\qquad\qquad\times \|\eta\|_{L^2\Omega^{p,0}(\tilde M,\tilde \omega )}\langle \Delta_{\overline{\partial},p,0}\eta,\eta\rangle^{\frac{1}{2}}_{L^2\Omega^{p,0}(\tilde M,\tilde\omega)}  \\ 
 +\frac{2}{C_1^{m-p}}\,  t^{m-p} \int_{\tilde M}|\eta|^2_{\tilde\omega}\dvol_{\tilde\omega}.
\end{multline*}

So, by (\ref{inequalityz}), 
\begin{multline*}
\int_{\tilde M}|\eta|^2_{\tilde\omega}\dvol_{\tilde\omega} 
 \leq \frac{2}{(m-p)!\,C_1^{m-p}}\sum_{r=0}^{m-p-1}\binom{m-p}{r}t^r\,C_r(t) \\
\qquad\qquad\times\|\eta\|_{L^2\Omega^{p,0}(\tilde M,\tilde \omega )}\langle \Delta_{\overline{\partial},p,0}\eta,\eta\rangle^{\frac{1}{2}}_{L^2\Omega^{p,0}(\tilde M,\tilde\omega)} \\ 
 +\frac{2}{C_1^{m-p}}\,  t^{m-p} \int_{\tilde M}|\eta|^2_{\tilde\omega}\dvol_{\tilde\omega},
 \end{multline*} 
 \textsl{i.e.}
\begin{multline*}
\biggl(1 - \frac{2}{C_1^{m-p}}\,  t^{m-p}\biggr)\int_{\tilde M}|\eta|^2_{\tilde\omega}\dvol_{\tilde\omega} \\
\leq \frac{2}{(m-p)!\,C_1^{m-p}}\sum_{r=0}^{m-p-1}\binom{m-p}{r}t^r\,C_r(t) \\ \times
\|\eta\|_{L^2\Omega^{p,0}(\tilde M,\tilde \omega )}\langle \Delta_{\overline{\partial},p,0}\eta,\eta\rangle^{\frac{1}{2}}_{L^2\Omega^{p,0}(\tilde M,\tilde\omega)}.
\end{multline*}
Since $ 1 - 2t^{m-p}/C_1^{m-p}>0$ for  $t > 0 $ sufficiently small, we have that the function
$$ 
H(t) := \frac{\frac{2}{(m-p)!\,C_1^{m-p}}\sum_{r=0}^{m-p-1}\binom{m-p}{r}t^r\,C_r(t)}{1 - 2t^{m-p}/C_1^{m-p}}
$$ 
is non zero (in fact positive) for $t>0$ sufficiently small, so that we find in the end
 $$
 \langle \Delta_{\overline{\partial},p,0}\eta,\eta\rangle_{L^2\Omega^{p,0}(\tilde M,\tilde\omega)} \geq \frac{1}{H(t)^2}  \|\eta\|^2_{L^2\Omega^{p,0}(\tilde M,\tilde\omega)}. 
$$
Let us now consider any $\psi\in \mathcal{D}(\Delta_{\overline{\partial},p,0})$, let $\eta:=E(1)(\psi)$ and $\upsilon:=\psi-\eta$. Then,  
$$
\langle \eta,\upsilon\rangle_{L^2\Omega^{p,0}(\tilde M,\tilde\omega)} = \langle \eta,\Delta_{\overline{\partial},p,0}\upsilon\rangle_{L^2\Omega^{p,0}(\tilde M,\tilde\omega)} = 0 
$$ 
and 
$$ 
\langle\Delta_{\overline{\partial},p,0}\upsilon,\upsilon \rangle_{L^2\Omega^{p,0}(\tilde M,\tilde\omega)}  \geq \|\upsilon\|^2_{L^2\Omega^{p,0}(\tilde M,\tilde\omega)}.
$$
Therefore we have 
$$
\begin{aligned}
\langle\Delta_{\overline{\partial},p,0}\psi,\psi\rangle_{L^2\Omega^{p,0}(\tilde M,\tilde\omega)}& =\langle\Delta_{\overline{\partial},p,0}(\eta+\upsilon),\eta+\upsilon\rangle_{L^2\Omega^{p,0}(\tilde M,\tilde\omega)} \\
& = \langle\Delta_{\overline{\partial},p,0}\eta,\eta\rangle_{L^2\Omega^{p,0}(\tilde M,\tilde\omega)}+\langle\Delta_{\overline{\partial},p,0}\upsilon,\upsilon\rangle_{L^2\Omega^{p,0}(\tilde M,\tilde\omega)} \\
& \geq \frac{1}{H(t)^2}\|\eta\|^2_{L^2\Omega^{p,0}(\tilde M,\tilde\omega)}+\|\upsilon\|^2_{L^2\Omega^{p,0}(\tilde M,\tilde\omega)} \\
& \geq K(t)(\|\eta\|^2_{L^2\Omega^{p,0}(\tilde M,\tilde\omega)}+\|\upsilon\|^2_{L^2\Omega^{p,0}(\tilde M,\tilde\omega)}) \\
& =K(t)\|\psi\|^2_{L^2\Omega^{p,0}(\tilde M,\tilde\omega)},
\end{aligned}
$$
where 
$$
K(t):=\min\biggl\{\frac{1}{H(t)^2},1\biggr\}.
$$
In conclusion we showed that for small $t>0$ there exists a positive function $K(t)$ such that any $\eta\in \mathcal{D}(\Delta_{\overline{\partial},p,0})$ satisfies
$$
\langle \Delta_{\overline{\partial},p,0}\eta,\eta\rangle_{L^2\Omega^{p,0}(\tilde M,\tilde\omega)}\geq K(t)\|\eta\|^2_{L^2\Omega^{p,0}(\tilde M,\tilde\omega)}
$$ 
and this amounts to saying that $0\notin \sigma(\Delta_{\overline{\partial},p,0})$.

\medskip

We are left to show that $\Delta_{\overline{\partial},m,0}$ has closed range. Since $(\tilde M,\tilde\omega)$ is K\"ahler and complete 
$$
\Delta_{\overline{\partial},m-q,0}\colon L^2\Omega^{m-q,0}(\tilde M,\tilde\omega)\to L^2\Omega^{m-q,0}(\tilde M,\tilde\omega)
$$ 
is unitary equivalent to 
$$
\Delta_{\overline{\partial},m,q}\colon L^2\Omega^{m,q}(\tilde M,\tilde\omega)\to L^2\Omega^{m,q}(\tilde M,\tilde\omega)
$$
through the action of the Hodge star operator. Hence, $0\notin \sigma(\Delta_{\overline{\partial},m,q})$ with $q=1,\dots,m$. 

Let us now focus on the case $q=1$. Since $0\notin \sigma(\Delta_{\overline{\partial},m,1})$ we know that $\im(\Delta_{\overline{\partial},m,1})$ is closed in $L^2\Omega^{m,1}(\tilde M,\tilde\omega)$. Hence, 
$$
\overline{\partial}_{m,0}\colon L^2\Omega^{m,0}(\tilde M,\tilde\omega)\to L^2\Omega^{m,1}(\tilde M,\tilde\omega)
$$ 
has closed range and this in turn implies that its adjoint 
$$
\overline{\partial}_{m,0}^*\colon L^2\Omega^{m,1}(\tilde M,\tilde\omega)\to L^2\Omega^{m,0}(\tilde M,\tilde\omega)
$$ 
has closed range, too. Finally since both $\overline{\partial}_{m,0}$ and $\overline{\partial}_{m,0}^*$ have closed range we can conclude that 
$$
\overline{\partial}_{m,0}^*\circ \overline{\partial}_{m,0}\colon L^2\Omega^{m,0}(\tilde M,\tilde\omega)\to L^2\Omega^{m,0}(\tilde M,\tilde\omega)
$$ 
has closed range, \textsl{i.e.} $\im(\Delta_{\overline{\partial},m,0})$ is closed in $L^2\Omega^{m,0}(\tilde M,\tilde\omega)$ as required.
\end{proof} 

\begin{rem}
The above property amounts to saying that $0$ is at most an isolated eigenvalue of $\Delta_{\overline{\partial},m,0}$.
\end{rem}

\begin{rem}
It can be proved that if we moreover suppose that a class $[\mu]\in\mathcal W_M$ can be represented by a semi-positive form which is strictly positive almost everywhere then the conclusions of Theorem \ref{thm:spectralgap} hold also on the universal cover of $M$ with no need to pass to a modification first (cf. Version 1 of the present paper on the arXiv for more on this).
\end{rem}

In view of the remark above, it is natural to ask the following.

\begin{quest}
Is it possibile to prove Theorem \ref{thm:spectralgap} without passing to a modification first?
\end{quest}

\subsection{Nonvanishing}

In this section we aim to show that the space of $L^2$ holomorphic $m$-forms on $\tilde{M}$ is infinite dimensional (recall that we replaced $M'$ with $M$). Since the original argument of Gromov applies \textsl{verbatim}, we provide simply a sketch and we refer to \cite{Bal06,Eys97,Gro91} for details.

For each $p=0,...,m$ let 
$$
\overline{\eth}_p\colon L^2\Omega^{p,\bullet}(\tilde{M},\tilde{\omega})\to L^2\Omega^{p,\bullet}(\tilde{M},\tilde{\omega})
$$ 
be the unique closed (and hence self-adjoint) extension of 
$$
\overline{\partial}_p+\overline{\partial}_p^t\colon \Omega_c^{p,\bullet}(\tilde{M})\to \Omega^{p,\bullet}_c(\tilde{M}),
$$ 
with $\Omega_c^{p,\bullet}(\tilde{M}):=\bigoplus_{q=0}^m\Omega^{p,q}_c(\tilde{M})$ and $\overline{\partial}_p+\overline{\partial}^t_p$ the corresponding Dirac operator. 

Let $F$ be the trivial line bundle $\tilde{M}\times \mathbb{C}\rightarrow \tilde{M}$ endowed with the standard Hermitian metric and flat connection $\nabla_0$. 

Let $\mu\in \mathcal{W}_M$ be arbitrarily fixed and let $\alpha\in \Omega^1(\tilde{M})\cap L^{\infty}\Omega^1(\tilde{M},\tilde{\omega})$ be such that $d\alpha=\tilde{\mu}$. Given any $s>0$, let $\nabla^s$ be the connection on $F$ defined as 
$$
\nabla^s:=\nabla_0+is\alpha.
$$ 
Note that for each fixed $s$ there exists a (not necessarily standard) holomorphic structure on $F$ such that $\nabla^s$ becomes the corresponding Chern connection, see \cite[p. 191]{Eys97}. Let 
$$
(\overline{\partial}_p+\overline{\partial}_p^t)\otimes \nabla^s\colon C_c^{\infty}(\tilde{M},\Lambda^{p,\bullet}(\tilde{M})\otimes F)\to C_c^{\infty}(\tilde{M},\Lambda^{p,\bullet}(\tilde{M})\otimes F)
$$ 
be the first order differential operator obtained by twisting the Dirac operator $\overline{\partial}_p+\overline{\partial}_p^t$ with the connection $\nabla^s$, see \textsl{e.g.} \cite[p. 111]{Bal06}. Finally let us denote with 
\begin{equation}\label{L2twist}
\overline{D}_p^s\colon L^2(\tilde{M},\Lambda^{p,\bullet}(\tilde{M})\otimes F)\to L^2(\tilde{M},(\Lambda^{p,\bullet}(\tilde{M})\otimes F)
\end{equation}
the $L^2$-closure of $(\overline{\partial}_p+\overline{\partial}_p^t)\otimes \nabla^s$ and let $\overline{D}_p^{s,+}$ (resp. $\overline{D}_p^{s,-}$) be the operator induced by \eqref{L2twist} with respect to the splitting given by $(m,\bullet)$-forms with even/odd anti-holomorphic degree. 

Although $\overline{D}_p^{s,+}$ is not equivariant with respect to the action of $\pi_1(M)$, it is possible for each fixed $s>0$ to construct a group $\Gamma_s$ as a central extension of $\pi_1(M)$ with respect to $U(1)$, in such a way that $\Gamma_s$ acts on $L^2(\tilde{M},\Lambda^{p,\bullet}(\tilde{M})\otimes F)$, its action commutes with $\overline{D}_p^s$ and the $L^2$-index of $\overline{D}_p^{s,+}$ with respect to $\Gamma_s$ is computed by the following formula, see \cite[Prop. 9.1.1]{Eys97} or \cite[Th. 8.31]{Bal06}:
\begin{equation}\label{L2formula}
L^2-\operatorname{ind}_{\Gamma_s}(\overline{D}_p^{s,+})=\int_M\operatorname{Todd}(M)\wedge\operatorname{ch}(\Lambda^{p,0}(M))\wedge \operatorname{ch}(F),
\end{equation}
with $\operatorname{ch}(F)=\exp(-s\mu/2\pi)$. 

Since $\int_M\mu^m>0$, we deduce that \eqref{L2formula} is a polynomial function in $s$ with non trivial leading coefficient, and hence it has only isolated zeros, see \cite[p. 281]{Gro91}. 

In particular for all but a discrete subset of real values $s$ we have that $\ker(\overline{D}_p^s)\neq \{0\}$.

\begin{cor}\label{cor:existencel2}
Let $M$ be a weakly K\"ahler hyperbolic manifold, and $\tilde\omega$ be the lift on the universal cover $\tilde M$ of a K\"ahler form $\omega$ on $M$. 
Then, the space of holomorphic $m$-forms which are $L^2$ with respect to $\tilde\omega$ on $\tilde M$ is infinite dimensional and the operator 
$$ 
\Delta_{\overline{\partial},m,0}\colon L^2\Omega^{m,0}(\tilde{M},\tilde{\omega})\to L^2\Omega^{m,0}(\tilde{M},\tilde{\omega})
$$ 
has closed range. 

Moreover, for $0 \le p \le m - 1$, there are no non trivial $L^2$ (with respect to any Kähler metric coming from $M$) holomorphic $p$-forms on the universal cover $\tilde M\to M$.
\end{cor}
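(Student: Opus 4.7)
My strategy for all three claims is to reduce to the modification $\nu\colon M' \to M$ of Theorem \ref{thm:spectralgap}, where the spectral gap is available, and then descend back to $M$ via the birational invariance of the $L^2$ Hodge numbers $h^{p,0}_{(2)}$ of the universal cover. The birational invariance is the central bridge: $L^2$ holomorphic $p$-forms extend across the exceptional locus of a modification by an $L^2$ Hartogs-type theorem, so the pullback $\nu^*$ identifies the spaces of $L^2$ holomorphic $p$-forms on $\tilde M$ and on $\tilde{M'}$. With this in hand, the vanishing for $0 \le p \le m - 1$ is immediate: Theorem \ref{thm:spectralgap} gives $0 \notin \sigma(\Delta_{\bar\partial, p, 0})$ on $\tilde{M'}$, forcing the kernel to vanish, and birational invariance transfers this vanishing to $\tilde M$. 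Quasi-isometry of any two K\"ahler metrics on the compact $M$ takes care of the ``any K\"ahler metric'' clause.

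For the infinite-dimensionality of $L^2$ holomorphic $m$-forms on $\tilde M$, I would carry out the Vafa--Witten/Gromov argument sketched above, working on $M'$ (which is itself weakly K\"ahler hyperbolic: $\mu' := \nu^* \mu \in \mathcal W_{M'}$ admits a bounded primitive on $\tilde{M'}$ by Lemma \ref{bounded} applied to $\nu$). Atiyah's $L^2$-index theorem applied to the twisted Dirac operator $\bar D_m^{s,+}$ yields a $\Gamma_s$-index equal to
$$
\int_{M'} \operatorname{Todd}(M') \wedge \operatorname{ch}(K_{M'}) \wedge e^{-s\mu'/2\pi},
$$
a polynomial of degree $m$ in $s$ whose leading coefficient is proportional to $\int_{M'}(\mu')^m$, strictly positive since $[\mu']$ is big and nef (pullback of big-and-nef being big-and-nef and having positive top self-intersection). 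A twisted Bochner--Kodaira argument using the boundedness of the primitive of $\tilde{\mu'}$, combined with the untwisted vanishings of Theorem \ref{thm:spectralgap} and Hodge star duality on the complete K\"ahler $(\tilde{M'}, \tilde\omega')$, shows that the $\Gamma_s$-Hodge numbers $h^{m, q}_{(2), \Gamma_s}(M', F)$ vanish for $q > 0$, so the index formula collapses to the $(m, 0)$ contribution, which is thus nonzero outside a discrete set of $s$. As in Gromov's original argument \cite{Gro91}, this forces $\ker(\Delta_{\bar\partial, m, 0})$ on $\tilde{M'}$ to be infinite dimensional as a complex vector space, and birational invariance transfers this to $\tilde M$.

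The closed-range assertion for $\Delta_{\bar\partial, m, 0}$ on $\tilde M$ follows from the analogous assertion in Theorem \ref{thm:spectralgap} on $\tilde{M'}$ by the same birational transfer. The main technical obstacle I foresee lies in the twisted Bochner--Kodaira and spectral-gap estimates on $\tilde{M'}$: the form $\mu'$ is only semi-positive, degenerating along the exceptional divisor of $\nu$, so achieving a gap uniform in the twist parameter $s$ will require adapting the Monge--Amp\`ere regularization and cutoff arguments from the proof of Theorem \ref{thm:spectralgap} to the $\Gamma_s$-equivariant, twisted setting; a secondary concern is making the closed-range property (a genuinely spectral rather than merely kernel-level condition) descend faithfully under the pullback $\nu^*$, which will require relating the $\bar\partial$-operators themselves and not just their kernels.
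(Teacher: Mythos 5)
There is a genuine gap in the central non-vanishing step. You propose to prove that the twisted $L^2$-Hodge numbers $h^{m,q}_{(2),\Gamma_s}(M',F)$ vanish for $q>0$ by a ``twisted Bochner--Kodaira argument'' combined with the untwisted vanishings of Theorem \ref{thm:spectralgap} and Hodge star duality, so that the index collapses to the $(m,0)$-term. But no such twisted vanishing is available here, and it does not follow from Theorem \ref{thm:spectralgap}: the twist $\nabla^s=\nabla_0+is\alpha$ destroys the Hodge-star symmetry you invoke, and the form $\mu'$ is degenerate along the exceptional locus, so there is no Bochner--Kodaira positivity to run; you yourself flag obtaining an $s$-uniform twisted gap as an unresolved obstacle, which is precisely the missing ingredient. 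The paper's proof deliberately avoids any twisted vanishing: from the non-vanishing of the index polynomial one only concludes $\ker(\overline{D}_m^s)\neq\{0\}$ for all $s$ outside a discrete set, then \cite[Prop.\ 7.1.2]{Eys97} (a spectral semicontinuity statement as $s\to 0$) gives $0\in\sigma(\overline{\eth}_m)$ for the \emph{untwisted} Dirac operator, and only at that point do the untwisted gaps $0\notin\sigma(\Delta_{\overline{\partial},m,q})$ for $q\geq 1$ (via the Hodge star and Theorem \ref{thm:spectralgap}) together with the closed range of $\Delta_{\overline{\partial},m,0}$ force $0$ to be an eigenvalue in bidegree $(m,0)$; infinite dimensionality then comes from $\pi_1$-invariance and \cite[Lemma 15.10]{Roe98}. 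Your route, as written, also glosses over how non-triviality of the twisted kernels for $s\neq 0$ yields the untwisted kernel at $s=0$ (``as in Gromov's original argument''), which is exactly the limiting step the citation to \cite{Eys97} supplies.

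A second, smaller gap is the descent of the closed-range property from $\tilde{M'}$ to $\tilde M$. This is not a consequence of the birational invariance of $L^2$-Hodge numbers, which only compares kernels (von Neumann dimensions), and you correctly note but do not resolve the need to relate the operators themselves. The paper does this explicitly: $\nu^*$ is an isometry on $L^2$ $(m,0)$-forms and injective continuous on $L^2$ $(m,q)$-forms, the closed extension of $\overline{\partial}_{m,0}$ on $\tilde M$ is identified with the maximal distributional extension over the complement of a measure-zero set by \cite[Proposition 5.6]{Bei19b}, and a solution of $\overline{\partial}\psi=\eta$ on $\tilde M$ is produced by transporting the solution from $\tilde{M'}$ and checking the distributional identity against test forms supported off the exceptional set. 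Without these two ingredients (the untwisted limiting argument in $s$ and the operator-level transfer under $\nu^*$), the proposal does not yet constitute a proof, although the remaining parts (vanishing for $p\leq m-1$ via Theorem \ref{thm:spectralgap} and \cite[Corollary 11.4]{Kol95}, positivity of the leading coefficient of the index polynomial, and the use of Lemma \ref{bounded} to see that $M'$ is weakly K\"ahler hyperbolic) do match the paper.
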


In particular, this corollary improve the statement of Theorem \ref{thm:spectralgap} about the closedness of the range of $\Delta_{\overline{\partial},m,0}$ which now holds on $M$ with no need to pass to the modification $M'$.

\begin{proof}
To begin with, let us observe that if $\nu\colon M'\to M$ is a birational morphism, then by \cite[Corollary 11.4]{Kol95} we have the equality of the $L^2$-Hodge numbers $h^{p,0}_{(2)}(M)=h^{p,0}_{(2)}(M')$ for $0\le p\le m$. The last assertion about the vanishing of holomorphic $p$-forms then follows directly from Theorem \ref{thm:spectralgap}.

From now on, let us work with the modification $M'$ of Theorem \ref{thm:spectralgap}, and let us call it $M$ for simplicity.
As recalled above, we know that $\ker(\overline{D}_p^s)\neq \{0\}$ except for a discrete subset of real values $s$. In particular there exists an $\epsilon>0$ such that $\ker(\overline{D}_p^s)\neq \{0\}$ for each $0<s<\epsilon$ and therefore, thanks to \cite[Prop. 7.1.2]{Eys97}, we can conclude that $0\in \sigma(\overline{\eth}_p)$, that is $0$ lies in the spectrum of $\overline{\eth}_p$. 

Let us now focus on the case $p=m$. Since 
\begin{multline}\label{directsum}
\overline{\eth}_m^2\colon L^2\Omega^{m,\bullet}(\tilde{M},\tilde{\omega})\to L^2\Omega^{m,\bullet}(\tilde{M},\tilde{\omega}) \\
=\bigoplus_{q=0}^m\Delta_{\overline{\partial},m,q}\colon L^2\Omega^{m,\bullet}(\tilde{M},\tilde{\omega})\to L^2\Omega^{m,\bullet}(\tilde{M},\tilde{\omega}),
\end{multline}
we know on the one hand that $0\in \sigma(\Delta_{\overline{\partial},m,q})$ for some $q=0,\dots,m$. 

On the other hand, as 
$$
\Delta_{\overline{\partial},m,q}\colon L^2\Omega^{m,q}(\tilde{M},\tilde{\omega})\to L^2\Omega^{m,q}(\tilde{M},\tilde{\omega})
$$ 
and 
$$
\Delta_{\overline{\partial},m-q,0}\colon L^2\Omega^{m-q,0}(\tilde{M},\tilde{\omega})\to L^2\Omega^{m-q,0}(\tilde{M},\tilde{\omega})
$$ 
are unitary equivalent through the action of the Hodge star operator, we deduce from Theorem \ref{thm:spectralgap} that $0\notin  \sigma(\Delta_{\overline{\partial},m,q})$ for each $q=1,\dots,m$. Hence $0\in \sigma(\Delta_{\overline{\partial},m,0})$.  

Furthermore, given that $\Delta_{\overline{\partial},m,0}$ has closed range we can conclude that $0$ is actually an eigenvalue of $\Delta_{\overline{\partial},m,0}$, \textsl{i.e.} $\ker(\Delta_{\overline{\partial},m,0})\neq \{0\}$. 

As previously recalled, in $L^2\Omega^{m,0}(\tilde{M},\tilde{\omega})$ we have $\ker(\Delta_{\overline{\partial},m,0})=\ker(\overline{\partial}_{m,0})$. Hence, we know that on $\tilde{M}$ the space of $L^2$ holomorphic $m$-forms is not trivial. 

Finally since this space is preserved by the action of $\pi_1(M)$ we can conclude that it is infinite dimensional, see \cite[Lemma 15.10]{Roe98}.

\medskip

Next we show that closedness of the range of $\Delta_{\overline{\partial},m,0}$ holds on $M$. Since $(\tilde{M}, \tilde{\omega})$ is complete, in order to show that $\Delta_{\overline{\partial},m,0}$ has closed range it suffices to prove that both 
$$
\overline{\partial}_{m,0}\colon L^2\Omega^{m,0}(\tilde{M},\tilde{\omega})\to L^2\Omega^{m,1}(\tilde{M},\tilde{\omega})
$$ 
and its adjoint
$$
\overline{\partial}_{m,0}^*\colon L^2\Omega^{m,1}(\tilde{M},\tilde{\omega})\to L^2\Omega^{m,0}(\tilde{M},\tilde{\omega})
$$ 
have closed range. Moreover, by the fact that these two operators are one the adjoint of the other it is enough to show only that the first one above has closed range. Let $\nu\colon M'\to M$ be the modification introduced in Theorem \ref{thm:spectralgap} endowed with a K\"ahler metric $\omega'$. Let $X\subset M'$ and $Y\subset M$ be analytic subsets such that $\nu|_A\colon A\to B$ is a biholomorphism, with $B:=M\setminus Y$ and $A:=M'\setminus X$. Let 
$$
\pi'\colon(\tilde{M}',\tilde{\omega}')\to (M',\omega')
\quad\textrm{and}\quad
\pi\colon(\tilde{M},\tilde{\omega})\to (M,\omega)
$$ 
be the corresponding universal coverings and let $\tilde{\nu}\colon\tilde{M}'\to \tilde{M}$ be a lift of $\nu$. 

Clearly $\tilde{\nu}\colon\tilde{A}\rightarrow \tilde{B}$ is a biholomorphism, with $\tilde{A}:=(\pi')^{-1}(A)$ and $\tilde{B}:=\pi^{-1}(B)$. Moreover, we point out that 
\begin{equation}\label{pull1}
\nu^*\colon L^2\Omega^{m,0}(\tilde{M},\tilde{\omega})\to L^2\Omega^{m,0}(\tilde{M}',\tilde{\omega}')
\end{equation}
is an isometry of Hilbert spaces and 
\begin{equation}\label{pull2}
\nu^*\colon L^2\Omega^{m,q}(\tilde{M},\tilde{\omega})\to L^2\Omega^{m,q}(\tilde{M}',\tilde{\omega}')
\end{equation}
is injective and continuous for each $q=1,\dots,m$. These claims follow easily \textsl{e.g.} by the computations carried out in \cite[Equations (7) and (10)]{Bei19} and the fact that both $\tilde{M}'\setminus \tilde{A}$ and $\tilde{M}\setminus \tilde{B}$ have measure zero. 

Let now $\eta\in L^2\Omega^{m,1}(\tilde{M},\tilde{\omega})$ be a form lying in the closure of the range of $\overline{\partial}_{m,0}\colon L^2\Omega^{m,0}(\tilde{M},\tilde{\omega})\to L^2\Omega^{m,1}(\tilde{M},\tilde{\omega})$. Since $(\tilde{M},\tilde{\omega})$ is complete, there exists a sequence $\{\phi_j\}\subset\Omega^{m,0}_c(\tilde{M})$ such that $\overline{\partial}_{m,0}\phi_j\to \eta$ in $L^2\Omega^{m,1}(\tilde{M},\tilde{\omega})$ as $j\rightarrow \infty$. By \eqref{pull1} and \eqref{pull2}, the sequence $\{\nu^*\phi_j\}$ is made of smooth forms, it is contained in the domain of $\overline{\partial}_{m,0}\colon L^2\Omega^{m,0}(\tilde{M}',\tilde{\omega}')\to L^2\Omega^{m,1}(\tilde{M}',\tilde{\omega}')$ and $\overline{\partial}_{m,0}(\nu^*\phi_j)\to \nu^*\eta$ in $L^2\Omega^{m,1}(\tilde{M}',\tilde{\omega}')$ as $j\rightarrow \infty$.

Since $\bar\partial_{m,0}\colon L^2\Omega^{m,0}(\tilde{M}',\tilde{\omega}')\to L^2\Omega^{m,1}(\tilde{M}',\tilde{\omega}')$ has closed range, there exists $\chi\in L^2\Omega^{m,0}(\tilde M',\tilde\omega')$ such that $\bar\partial_{m,0}\chi=\nu^*\eta$. We claim that $\psi:=(\nu^*)^{-1}\chi$ lies in the domain of $\bar\partial_{m,0}\colon L^2\Omega^{m,0}(\tilde{M},\tilde{\omega})\to L^2\Omega^{m,1}(\tilde{M},\tilde{\omega})$ and that $\bar\partial_{m,0}\psi=\eta$. Thanks to \cite[Proposition 5.6]{Bei19b} we know that the operator $\bar\partial_{m,0}\colon L^2\Omega^{m,0}(\tilde{M},\tilde{\omega})\to L^2\Omega^{m,1}(\tilde{M},\tilde{\omega})$ equals the $L^2$ distributional extension
$$
\bar\partial_{m,0,\textrm{max}}\colon L^2\Omega^{m,0}(\tilde{B},\tilde{\omega}|_{\tilde B})\to L^2\Omega^{m,1}(\tilde{B},\tilde{\omega}|_{\tilde B})
$$
of $\bar\partial_{m,0}\colon \Omega^{m,0}_c(\tilde{B})\to \Omega^{m,1}_c(\tilde{B})$.

Hence, in order to verify the claim above, it is enough to check that 
$$
(-1)^{m-1}\int_{\tilde M}\psi\wedge\bar\partial_{0,m-1}\varphi=\int_{\tilde M}\eta\wedge\varphi,\quad\forall \varphi\in\Omega_c^{0,m-1}(\tilde B).
$$
We have
$$
\begin{aligned}
(-1)^{m-1}\int_{\tilde M}\psi\wedge\bar\partial_{0,m-1}\varphi &= (-1)^{m-1}\int_{\tilde B}\psi\wedge\bar\partial_{0,m-1}\varphi \\
&=(-1)^{m-1}\int_{\tilde A}\underbrace{\nu^*\psi}_{=\chi}\wedge\bar\partial_{0,m-1}\nu^*\varphi \\
&=\int_{\tilde A}\nu^*\eta\wedge\nu^*\varphi =\int_{\tilde B}\eta\wedge\varphi,
\end{aligned}
$$
and the claim is proved.

We can thus conclude that 
$$
\overline{\partial}_{m,0}\colon L^2\Omega^{m,0}(\tilde{M},\tilde{\omega})\to L^2\Omega^{m,1}(\tilde{M},\tilde{\omega})
$$ 
has closed range and so that 
$$ 
\Delta_{\overline{\partial},m,0}\colon L^2\Omega^{m,0}(\tilde{M},\tilde{\omega})\to L^2\Omega^{m,0}(\tilde{M},\tilde{\omega})
$$ 
has closed range, as well. 
\end{proof}

\section{Geometric Consequences}

In this section we derive three type of geometric consequences of our weaker notion of Kähler hyperbolicity. Namely, we deduce several statements of general typeness, but also index-type consequences and, to finish with, geometric consequences about the distribution of entire curves and Kobayashi hyperbolicity properties. In the last part we shall derive stronger consequences in the case of surfaces.

\subsection{General typeness, index, and topology}\label{subsect:generaltype}

We already know that a Kähler hyperbolic manifold is of general type. We now see how the weaker notion of weakly Kähler hyperbolicity gives in fact the same consequence.

\begin{thm}\label{thm:weaklygeneral}
Let $M$ be a weakly Kähler hyperbolic manifold. Then, $M$ is projective and of general type and moreover the Euler characteristic $\chi(K_M)$ is strictly positive.
\end{thm}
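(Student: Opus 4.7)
The plan is to reduce everything to the modification $\nu\colon M'\to M$ of Theorem~\ref{thm:spectralgap}, extract $\chi(K_{M'})>0$ from Atiyah's $L^2$-index theorem, and then bootstrap to bigness of $K_{M'}$ using that $\int_{M'}(\mu')^m>0$. The reduction itself rests on bimeromorphic invariance: plurigenera are invariants of smooth K\"ahler manifolds (so $M$ is of general type iff $M'$ is), while Grauert--Riemenschneider vanishing $R^i\nu_*K_{M'}=0$ for $i>0$ combined with $\nu_*K_{M'}=K_M$ yields $\chi(K_M)=\chi(K_{M'})$. Once $K_{M'}$ is shown to be big, $M'$ is Moishezon and hence, being K\"ahler, projective by Moishezon's theorem; then $M$, being K\"ahler and bimeromorphic to the projective $M'$, is in turn Moishezon and K\"ahler, thus projective.

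For $\chi(K_{M'})>0$, the plan is to apply Theorem~\ref{thm:gammaHodge} to $M'$ at $p=m$ and use the Hodge-star-induced unitary equivalence $\Delta_{\overline{\partial},m,q}\cong\Delta_{\overline{\partial},m-q,0}$ on the complete K\"ahler cover $\tilde M'$ (which was already invoked in the proof of Theorem~\ref{thm:spectralgap}) to rewrite $h^{m,q}_{(2)}(M')=h^{m-q,0}_{(2)}(M')$. By Corollary~\ref{cor:existencel2}, this vanishes for $0\le m-q\le m-1$ and is strictly positive for $m-q=m$, so the alternating sum collapses to
\begin{equation*}
\chi(K_{M'})=\sum_{q=0}^{m}(-1)^{q}h^{m,q}_{(2)}(M')=h^{m,0}_{(2)}(M')>0.
\end{equation*}

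The hardest step is upgrading this positivity to bigness of $K_{M'}$ itself. After a small perturbation inside the open cone $\mathcal{W}_{M'}$ one may assume $[\mu']$ is rational, giving a big and nef line bundle $L\to M'$ with $c_1(L)=N_0[\mu']$ for some positive integer $N_0$. Kawamata--Viehweg vanishing applied to $L$ kills $H^{i}(M',K_{M'}\otimes L^{\otimes N})$ for $i>0$ and $N\ge 1$, and Hirzebruch--Riemann--Roch then gives $h^0(M', K_{M'}\otimes L^{\otimes N})\sim \frac{(NN_0)^m}{m!}\int_{M'}(\mu')^m$, which is of order $N^m$ since a big and nef class has positive top self-intersection. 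The main obstacle is to trade this bigness of $K_{M'}\otimes L^{\otimes N}$ for bigness of $K_{M'}$ alone; the plan is to refine the Vafa--Witten/twisted Dirac computation of Section~2 used in Corollary~\ref{cor:existencel2}---whose $\Gamma_s$-index of $\overline{D}_m^{s,+}$ is a polynomial in $s$ of degree $m$ with leading coefficient a nonzero multiple of $\int_{M'}(\mu')^m$---and to combine it with a Bergman-kernel descent from $\tilde M'$ to $M'$, available because the $\pi_1(M')$-module of $L^2$ holomorphic $m$-forms on $\tilde M'$ has positive von Neumann dimension, to obtain the required asymptotic lower bound $h^0(M',K_{M'}^{\otimes N})$ of order $N^m$, which exactly says $K_{M'}$ is big.
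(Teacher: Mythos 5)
Your treatment of the Euler characteristic is essentially the paper's own argument and is fine: Atiyah's $L^2$-index theorem (Theorem \ref{thm:gammaHodge}) at $p=m$, the Hodge-star equivalence $\Delta_{\overline{\partial},m,q}\cong\Delta_{\overline{\partial},m-q,0}$ on the complete K\"ahler cover, the vanishing for $0\le p\le m-1$ from Theorem \ref{thm:spectralgap}, the nonvanishing from Corollary \ref{cor:existencel2}, and bimeromorphic invariance to come back from $M'$ to $M$. The genuine gap is in the part that actually makes $M$ of general type: you never prove that $K_{M'}$ (or $K_M$) is big. First, the perturbation step is unjustified: $\mathcal W_{M'}$ is a convex cone of big and nef classes admitting a bounded primitive on the universal cover, but it is not open in $H^{1,1}(M',\mathbb R)$ (nefness is a closed condition, and the bounded-primitive condition cuts out a linear subspace of $H^{1,1}$ that has no reason to contain a rational point), so there may be no line bundle $L$ with $c_1(L)$ proportional to $[\mu']$ at all. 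Second, even granting such an $L$, Kawamata--Viehweg plus Riemann--Roch only give bigness of $K_{M'}\otimes L^{\otimes N}$, which holds on \emph{any} manifold carrying a big and nef line bundle and carries no information about $K_{M'}$; you acknowledge this, but the proposed remedy --- ``refine the Vafa--Witten computation and combine it with a Bergman-kernel descent'' --- is a restatement of the missing theorem, not a proof. Positivity of the von Neumann dimension of the space of $L^2$ holomorphic $m$-forms on $\tilde M$ does not formally descend to an asymptotic lower bound $h^0(M,K_M^{\otimes N})\gtrsim N^m$: the known passage is Koll\'ar's result \cite[13.10 Corollary]{Kol95}, whose hypothesis is that $M$ has \emph{generically large fundamental group}, a condition your proposal never mentions or verifies.

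This is exactly where the paper's proof differs and is both shorter and complete: it works directly on $M$ (Corollary \ref{cor:existencel2} already transports the nonvanishing of $L^2$ holomorphic $m$-forms from $M'$ back to $M$ via bimeromorphic invariance of the $(p,0)$ $L^2$-Hodge numbers), it invokes the earlier proposition that a weakly K\"ahler hyperbolic manifold has generically large fundamental group (proved via the degeneracy set $Z_M$ and the exactness of $\pi^*\mu$), and then cites \cite[13.10 Corollary]{Kol95} to conclude that $K_M$ is big; projectivity then follows from Moishezon's theorem exactly as you say. To repair your argument you would have to import both of these ingredients --- the generically large fundamental group statement and Koll\'ar's theorem --- at which point the detour through a rational class, Kawamata--Viehweg, and the twisted index polynomial becomes unnecessary.
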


The last part of the statement above extends, for $p=m$, Gromov's computation for Kähler hyperbolic manifolds \cite[Theorem 0.4.A]{Gro91} to weakly Kähler hyperbolic manifolds. 

\begin{proof}
By Corollary \ref{cor:existencel2}, there exists on $\tilde M$ a non-zero $L^2$ holomorphic top form. This implies that $K_M$ is big, thanks to \cite[13.10 Corollary]{Kol95}, since $M$ has generically large fundamental group. This means by definition that $M$ is of general type. Since $K_M$ is a big line bundle, the manifold $M$ is Moishezon, and being also K\"ahler, it is projective by Moishezon's theorem.

The last part follows from Theorem \ref{thm:gammaHodge}, Theorem \ref{thm:spectralgap}, Corollary \ref{cor:existencel2}, and the birational invariance \cite[Corollary 11.4]{Kol95} of $L^2$-Hodge numbers in bidegree $(m,q)$, being $M$ Kähler.
\end{proof}

Thanks to \cite[4.4 Corollary]{ABR92}, it is known that the universal cover $\tilde M$ of a compact Kähler manifold $M$ has at most one end. 
Here, we give a short proof of this fact in the case where $M$ is a weakly Kähler hyperbolic manifold.

\begin{prop}
Let $M$ be a weakly Kähler hyperbolic manifold and let $p\colon\tilde{M}\rightarrow M$ be its universal covering. Then, $\tilde{M}$ has only one end, and
$$
H^1_c(\tilde{M},\mathbb{Z})=H_{2m-1}(\tilde{M},\mathbb{Z})=\{0\}.
$$ 
\end{prop}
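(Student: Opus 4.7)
The plan is to recognize that all three claims are equivalent and all follow from $\tilde M$ being one-ended, then to establish one-endedness directly.

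By Poincar\'e--Lefschetz duality for the oriented $2m$-dimensional manifold $\tilde M$, we have
$$
H^1_c(\tilde M,\mathbb Z)\cong H_{2m-1}(\tilde M,\mathbb Z),
$$
so both cohomology statements coincide. Next I would use that $\tilde M$ is simply connected, so $H^1(\tilde M,\mathbb Z)=0$ by Hurewicz, and that $\pi_1(M)$ is infinite (being non-amenable by Proposition \ref{prop:nonamenable}), so $\tilde M$ is non-compact and $H^0_c(\tilde M,\mathbb Z)=0$. The portion of the long exact sequence relating compactly-supported to ordinary cohomology reads
$$
0\to H^0(\tilde M,\mathbb Z)\to H^0_e(\tilde M,\mathbb Z)\to H^1_c(\tilde M,\mathbb Z)\to 0,
$$
where $H^0_e(\tilde M,\mathbb Z):=\varinjlim_K H^0(\tilde M\setminus K,\mathbb Z)$ is the free abelian group of rank equal to the number of ends of $\tilde M$. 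This identifies $H^1_c(\tilde M,\mathbb Z)\cong \mathbb Z^{\#\mathrm{ends}(\tilde M)-1}$, so its vanishing is equivalent to $\tilde M$ being one-ended.

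The remaining and main task is thus to show that $\tilde M$ has exactly one end. I would argue by contradiction: assume $\tilde M$ has at least two ends, so there is a compact $K\subset \tilde M$ such that $\tilde M\setminus K$ has at least two unbounded connected components $U_1,U_2$, each of infinite volume by cocompactness of the $\pi_1(M)$-action. Apply the linear isoperimetric inequality (the proposition preceding Corollary \ref{cor:zeronotspect}) to the bounded $C^1$-domains $U_i\cap B_R$ for balls $B_R$ exhausting $\tilde M$, obtaining
$$
\mathrm{Volume}(U_i\cap B_R)\leq C\bigl(\mathrm{Area}(U_i\cap\partial B_R)+A_0\bigr),
$$
where $A_0$ bounds the area of $\partial K$. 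A coarea/Gronwall argument then yields exponential lower bounds on the volumes $V_i(R)=\mathrm{Volume}(U_i\cap B_R)$. The main obstacle is to convert this splitting into a direct contradiction using only the weak K\"ahler hyperbolicity; if this step does not go through cleanly, one can instead invoke \cite[Corollary 4.4]{ABR92}, which asserts that the universal cover of any compact K\"ahler manifold with infinite fundamental group is one-ended, and therefore applies to our $M$ thanks to Proposition \ref{prop:nonamenable}.
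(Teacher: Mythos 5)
Your reduction of the two vanishing statements to one-endedness is fine and is consistent with what the paper does at the end of its own argument: simple connectivity gives $H^1(\tilde M,\mathbb Z)=0$, one-endedness gives the injection $H^1_c(\tilde M,\mathbb Z)\hookrightarrow H^1(\tilde M,\mathbb Z)$, and Poincar\'e duality gives $H_{2m-1}(\tilde M,\mathbb Z)=0$. The gap is in the only substantive step, the one-endedness itself. The linear isoperimetric inequality amounts to positivity of the Cheeger constant of $(\tilde M,\pi^*g)$, i.e.\ non-amenability, and spaces of bounded geometry with positive Cheeger constant can perfectly well have more than one end --- indeed infinitely many (a Riemannian thickening of a trivalent tree, or the universal cover of a wedge of circles). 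So no coarea/Gronwall manipulation of the volumes $V_i(R)$ can yield a contradiction from the isoperimetric inequality alone: exponential volume growth in each of several ends is not contradictory. The step you flag as not going through cleanly cannot go through at all without injecting the complex structure.

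The missing input is $L^2$-cohomological, and it is exactly where Theorem \ref{thm:spectralgap} enters. The paper first shows that the natural map $H^1_c(\tilde M,\mathbb R)\to H^1_{(2)}(\tilde M,\tilde\omega)$ is injective; this uses precisely your observation that every unbounded component of $\tilde M\setminus K$ has infinite volume, so that an $L^2$ primitive of a compactly supported closed $1$-form, being locally constant near infinity, must vanish there. It then shows $H^1_{(2)}(\tilde M,\tilde\omega)=0$ by combining Corollary \ref{cor:zeronotspect} (so $d_0$ has closed range and $H^1_{(2)}(\tilde M,\tilde\omega)\simeq\mathcal H^1_{(2)}(\tilde M,\tilde\omega)$) with the vanishing of $L^2$-harmonic $1$-forms, which follows from Theorem \ref{thm:spectralgap} via the birational invariance of the $(p,0)$ $L^2$-Hodge numbers, completeness, and the K\"ahler identities. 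Hence $H^1_c(\tilde M,\mathbb R)=0$, which excludes a second end by the standard bump-function argument. Your fallback of invoking \cite{ABR92} is logically admissible but defeats the purpose: the proposition is explicitly presented as a short independent proof of (a special case of) that very result, so an argument resting on it establishes nothing beyond the citation.
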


\begin{proof}
Fix a Kähler metric $\omega$ on $M$ and let $\tilde\omega$ be its lifting to the universal cover. We follow \cite[Section 5.1]{CP04} (see also \cite[Section 4]{LW01}, cf. in particular  \cite[Proposition 5.1 and 5.2]{CP04}. First of all we want to show that the first de Rham cohomology group with compact support of $\tilde{M}$, denoted here with $H^1_c(\tilde{M})$, is trivial. Let us consider the $L^2$-de Rham cohomology group of degree one: 
$$
H^{1}_{(2)}(\tilde{M},\tilde{\omega}):=\ker(d_1)/\text{im}(d_0),
$$ 
where $d_0\colon L^2(\tilde{M},\tilde{\omega})\to L^2\Omega^1(\tilde{M},\tilde{\omega})$ and $d_1\colon L^2\Omega^1(\tilde{M},\tilde{\omega})\to L^2\Omega^2(\tilde{M},\tilde{\omega})$ are the $L^2$-closed extensions of $d_0\colon C^{\infty}_c(\tilde{M})\to \Omega_c^{1}(\tilde{M})$ and $d_1\colon\Omega^1_c(\tilde{M})\to \Omega_c^{2}(\tilde{M})$, respectively. 

Since $(\tilde M, \tilde{\omega})$ has bounded geometry, it is not difficult to see that given any compact subset $K\subset \tilde{M}$, each non-compact connected component of $\tilde{M}\setminus K$ has infinite volume with respect to $\tilde{\omega}$. Therefore, using this latter remark, it is easy to deduce that the inclusion $\Omega_c^1(\tilde{M})\hookrightarrow L^2\Omega^1(\tilde{M},\tilde{\omega})$ induces an injective map $H^1_c(\tilde{M})\rightarrow H^1_{(2)}(\tilde{M},\tilde{\omega})$. 

On the other hand, thanks to Theorem \ref{thm:spectralgap}, birational invariance, $L^2$ Hodge decomposition, begin $(\tilde{M},\tilde{\omega})$ K\"ahler and complete, we know that $\mathcal{H}^1_{(2)}(\tilde{M},\tilde{\omega})=\{0\}$, that is the space of $L^2$-harmonic $1$-forms on $(\tilde{M},\tilde{\omega})$ is trivial. 
Moreover, since $0\notin \sigma(\Delta_0)$, we also know that $H^1_{(2)}(\tilde{M},\tilde{\omega})\simeq \mathcal{H}^1_{(2)}(\tilde{M},\tilde{\omega})$. Hence, we can conclude that $H^1_c(\tilde{M})=\{0\}$, as desired. 

Next, let us recall how the vanishing of $H^1_c(\tilde{M})$ implies that $\tilde{M}$ has only one end. If there were a compact subset $K\subset \tilde{M}$ such that $\tilde{M}\setminus K$ has two non-compact connected components $X$ and $Y$, then we could find a relatively compact open subset $U\supset K$ and $f\in C^{\infty}(\tilde{M})$ such that $\operatorname{Supp}(f)\subset \overline{U}\cup X\cup Y$, $f|_{X\setminus (\overline{U}\cap X)}=c_1$ and $f|_{Y\setminus (Y\cap \overline{U})}=c_2$ with $c_1,c_2\in \mathbb{R}$ and $c_1\neq c_2$.  At this point, it is straightforward to check that $0\neq [d_0f]\in H^1_c(\tilde{M})$, thus contradicting the fact that $H^1_c(\tilde{M})=\{0\}$. 

Next, we observe that since $\tilde{M}$ is simply connected, we know that we have $H^1(\tilde{M},\mathbb{Z})=\{0\}$, as well. Finally, since $\tilde{M}$ as only one end, we have an injection $H^1_c(\tilde{M},\mathbb{Z})\hookrightarrow H^1(\tilde{M},\mathbb{Z})$, and thus we can conclude that $H^1_c(\tilde{M},\mathbb{Z})=\{0\}$. Finally, $H_{2m-1}(\tilde{M},\mathbb{Z})=\{0\}$ by Poincar\'e duality.
\end{proof}

\subsection{Applications to Lang's conjecture and Kobayashi hyperbolicity}\label{subsect:Lang}

The first thing we want to treat here is the distribution of entire curves in a weakly Kähler hyperbolic manifold. We begin with the following basic lemma.

\begin{lem}\label{lem:current}
Let $(M,\mu)$ be a weakly K\"ahler hyperbolic manifold, and fix a Kähler form $\omega$ on M. If $f\colon\mathbb{C} \to M$ is an entire curve, and $\Phi$ is an Ahlfors current associated to $f\colon\mathbb C\to (M,\omega)$, then $\Phi(\mu) = 0$.   
\end{lem}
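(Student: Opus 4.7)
The plan is to unravel the definition of $\Phi(\mu)$ directly from the definition of the Ahlfors current, exploiting the existence of a bounded primitive of $\pi^*\mu$ on the universal cover $\tilde M$.

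First I would lift $f$ to a holomorphic map $\tilde f\colon\mathbb C\to\tilde M$; this is possible since $\mathbb C$ is simply connected and $\pi\colon\tilde M\to M$ is a covering. Pick $\mu\in\mathcal W_M$ together with a bounded $1$-form $\alpha$ on $\tilde M$ satisfying $\pi^*\mu=d\alpha$ and $|\alpha|_{\pi^*\omega}\le C$. Then by Stokes' theorem applied to $\tilde f\colon\overline{\mathbb D_t}\to\tilde M$, one has
$$
\int_{\mathbb D_t}f^*\mu=\int_{\mathbb D_t}\tilde f^*d\alpha=\int_{\partial\mathbb D_t}\tilde f^*\alpha.
$$

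Next I would estimate the boundary integral pointwise exactly as in the proof of Corollary \ref{cor:khypkobhyp}: the bound $|\alpha|_{\pi^*\omega}\le C$ implies $|\tilde f^*\alpha(v)|\le C\,|f_*v|_\omega$ for every tangent vector $v$, and therefore
$$
\left|\int_{\partial\mathbb D_t}\tilde f^*\alpha\right|\le C\,L_\omega(t).
$$
Integrating against $dt/t$ from $0$ to $r$ yields $|T_{f,r}(\mu)|\le C\,S_{f,r}(\omega)$.

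Finally I would divide by $T_{f,r}(\omega)$ and pass to the limit along an admissible sequence $\{r_k\}_{k\in\mathbb N}$ producing the Ahlfors current $\Phi$. By admissibility, $S_{f,r_k}(\omega)/T_{f,r_k}(\omega)\to 0$, so
$$
|\Phi(\mu)|=\lim_{k\to\infty}\frac{|T_{f,r_k}(\mu)|}{T_{f,r_k}(\omega)}\le C\lim_{k\to\infty}\frac{S_{f,r_k}(\omega)}{T_{f,r_k}(\omega)}=0,
$$
which gives the conclusion. There is essentially no obstacle here: the only mild subtlety is that $\mu$ need not be pointwise positive, which forces the use of absolute values, but the admissibility of the sequence $\{r_k\}$ is tailor-made precisely for this kind of bounded-primitive argument.
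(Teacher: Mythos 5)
Your proof is correct and follows essentially the same route as the paper's: lift $f$ to the universal cover, use Stokes together with the bounded primitive of $\pi^*\mu$ to get $|T_{f,r}(\mu)|\le C\,S_{f,r}(\omega)$, then divide by $T_{f,r}(\omega)$ and pass to the limit along an admissible sequence. No gaps; the remark about needing absolute values because $\mu$ need not be pointwise non-negative is exactly the right subtlety to flag.
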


\begin{proof}
Let $\tilde{f}\colon\mathbb{C} \to \tilde{M}$ be a lifting of $f$. Assume that $\tilde{\mu} = d \eta $ with 
$|\eta|_{\tilde{\omega}}\leq C$. Then for any vector $v\in T_\mathbb C$ we have:
$$ 
|\tilde{f}^*\eta(v)| = |\eta(\tilde{f}_*(v)| \leq C |\tilde{f}_*(v)|_{\tilde{\omega}} = C |f_*(v)|_{\omega}.
$$
Then, by Stokes theorem,
$$  
\begin{aligned}
\left |\int_{\mathbb D_t} f^*\mu \right|& = \left|\int_{\mathbb D_t} \tilde{f}^*d \eta\right| \\
& = \left|\int_{\partial\mathbb D_t}  \tilde{f}^*\eta\right| \leq C\,L_{\omega}(\partial\mathbb D_t).
\end{aligned}
$$
If we divide the above inequality by $t$ and integrate over $(0,r)$, we find:
$$
|T_{f,r}(\mu) |\leq C\,S_{f,r}(\omega).
$$ 
Thus, if we divide by $T_{f,r}(\omega)$ we obtain:
$$ 
0 \leq \frac{|T_{f,r}(\mu)|}{T_{f,r}(\omega)} \leq  C\, \frac{S_{f,r}(\omega)}{T_{f,r}(\omega)}. 
$$  
We finally choose an admissible sequence $\{r_k\}$ in the above inequality, and let $k$ goes to $+\infty$, to obtain $\Phi(\mu) = 0$. 
\end{proof}

Using this lemma we can prove the theorem below.

\begin{thm}\label{thm:GG}
If $M$ is a weakly K\"ahler hyperbolic manifold, then the image of any entire curve in $M$ is contained in $Z_M$.  

In particular if $Z_M$ has dimension $1$ with no rational or elliptic components, then $M$ is Kobayashi hyperbolic.  
\end{thm}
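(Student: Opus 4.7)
My plan for the first assertion is a proof by contradiction combining Ahlfors' currents (Lemma \ref{lem:current}) with Boucksom's construction of K\"ahler currents having analytic singularities equal to the non-K\"ahler locus. Suppose $f\colon\mathbb C\to M$ is an entire curve with $f(\mathbb C)\not\subset Z_M$. Using the Noetherianity remark following the definition of $Z_M$, I can pick a single class $[\mu]\in\mathcal W_M$ such that $f(\mathbb C)\not\subset\operatorname{Null}([\mu])=E_{nK}([\mu])$, the last identification being the Collins--Tosatti theorem. Since $[\mu]$ is big, Boucksom's theorem provides a K\"ahler current $T=\mu+i\partial\bar\partial\varphi\in[\mu]$ with $T\geq\varepsilon\,\omega$ for some $\varepsilon>0$ and with analytic singularities along $E_{nK}([\mu])$; in particular $\varphi$ is a $\mu$-psh function, bounded above, and smooth on $M\setminus E_{nK}([\mu])$. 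After a translation of $\mathbb C$ I may further assume that $f(0)\notin E_{nK}([\mu])$, so that $u:=f^*\varphi$ is a quasi-psh function on $\mathbb C$, not identically $-\infty$, bounded above by $\sup_M\varphi$, and finite at $0$.

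The core of the argument is a two-sided estimate for the Nevanlinna functional $T_{f,r}(T)$. From the positivity $T\geq\varepsilon\,\omega$, pulled back as positive currents (which is legitimate since $f(\mathbb C)$ avoids the polar set of $T$), I get $T_{f,r}(T)\geq\varepsilon\,T_{f,r}(\omega)$. On the other hand, the decomposition $T=\mu+i\partial\bar\partial\varphi$ yields $T_{f,r}(T)=T_{f,r}(\mu)+N(r)$, where $N(r)=\int_0^r\frac{dt}{t}\int_{\mathbb D_t}i\partial\bar\partial u$. Lemma \ref{lem:current} bounds the first summand by $|T_{f,r}(\mu)|\leq C\,S_{f,r}(\omega)$, while the Jensen--Lelong formula combined with $u\leq\sup_M\varphi<\infty$ and $u(0)\in\mathbb R$ bounds $N(r)$ by a constant independent of $r$. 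Chaining the inequalities together and dividing by $T_{f,r}(\omega)$, the choice of an admissible sequence $r_k\to+\infty$ (along which $S_{f,r_k}(\omega)/T_{f,r_k}(\omega)\to 0$ and $T_{f,r_k}(\omega)\to+\infty$) forces $\varepsilon\leq 0$, the sought contradiction.

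For the second statement, by the first part every entire curve in $M$ has image inside $Z_M$. If $f\colon\mathbb C\to M$ is non-constant, the Zariski closure of $f(\mathbb C)$ is an irreducible analytic subvariety $V$ of positive dimension contained in the one-dimensional set $Z_M$; hence $V$ is an irreducible component of $Z_M$. By hypothesis $V$ is neither rational nor elliptic, so its normalization $\widetilde V$ is a compact Riemann surface of genus $\geq 2$. Lifting $f$ through the normalization map $\widetilde V\to V$ (which is possible since $\mathbb C$ is smooth) produces a non-constant holomorphic map $\mathbb C\to\widetilde V$, impossible since $\widetilde V$ is uniformized by the unit disc. Hence $M$ contains no entire curves, and Brody's theorem concludes that $M$ is Kobayashi hyperbolic. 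I expect the main technical care to be needed in the Jensen--Lelong step of the second paragraph, to ensure that the formula applies cleanly to $u=f^*\varphi$ even though $u$ may take $-\infty$ values on the discrete set $f^{-1}(E_{nK}([\mu]))$.
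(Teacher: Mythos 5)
Your argument for the first assertion is correct, but it takes a genuinely different route from the paper's. The paper also starts from Boucksom's K\"ahler current in $[\mu]$ with analytic singularities along $E_{nK}([\mu])$, but then passes to a modification $\tau\colon\hat M\to M$ on which $\tau^*[\mu]=[\omega]+\sum_j\lambda_j[E_j]$ with $\omega$ K\"ahler and each $E_j$ mapped into $E_{nK}([\mu])$; lifting $f$ to $\hat M$ (possible precisely because $f(\mathbb C)\not\subset E_{nK}([\mu])$) and taking an Ahlfors current $\Phi$ for the lift, Lemma \ref{lem:current} gives $[\Phi]\cdot\tau^*[\mu]=0$ while $[\Phi]\cdot[\omega]=1$, so $[\Phi]\cdot[E_{j_0}]<0$ for some $j_0$, and Proposition \ref{prop:ahlfors} forces the lifted curve into $E_{j_0}$, a contradiction. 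You instead stay on $M$ and replace this cohomological intersection computation by a Nevanlinna-type two-sided estimate for $T_{f,r}(T)$: the lower bound $\varepsilon\,T_{f,r}(\omega)$ from $T\ge\varepsilon\,\omega$, and the upper bound $C\,S_{f,r}(\omega)+O(1)$ obtained by combining the inequality $|T_{f,r}(\mu)|\le C\,S_{f,r}(\omega)$ established inside the proof of Lemma \ref{lem:current} with the Jensen--Lelong formula for the quasi-psh function $u=f^*\varphi$ (legitimate since $u$ is locally psh plus smooth, $u\le\sup_M\varphi$, $f^{-1}(E_{nK}([\mu]))$ is discrete, and $u(0)>-\infty$ after translating). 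What your approach buys is the avoidance of the modification, of the lifting of the curve, and of Brunella's intersection inequality; the price is the (standard) machinery of pullbacks of closed positive $(1,1)$-currents with potentials and Jensen--Lelong for differences of subharmonic functions. Both proofs rest on the same key input from weak K\"ahler hyperbolicity, namely the bound on $T_{f,r}(\mu)$ by $S_{f,r}(\omega)$. Your treatment of the second assertion coincides with the paper's (no entire curves can exist, then Brody). One small point of wording: $f(\mathbb C)$ does not \emph{avoid} the polar set of $T$, it is merely not contained in it; that weaker fact is what you actually use, and it suffices.
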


Thus, in a weakly Kähler hyperbolic manifold we cannot have Zariski dense entire curves or an entire curve passing through a general point.

Since weakly Kähler hyperbolic implies general type, Theorem \ref{thm:GG} then gives a confirmation of the (stronger version of the) Green--Griffiths conjecture for this particular class of manifolds of general type.

\begin{proof}
Let $\mu \in\mathcal{W}_M$, and let $f\colon\mathbb{C} \to M$ be an entire curve. By \cite[Theorem 3.17, (ii)]{Bou04}  there exists a K\"ahler current $R$ in the cohomology class of $\mu$ which is smooth on $M \setminus E_{nK}([\mu]).$ As in the proof of Theorem \ref{thm:spectralgap}, we consider a modification $\tau\colon\hat{M} \to M $ such that 
\begin{itemize}
\item the pull-back $\tau^*R$ is cohomologous to the sum of a Kähler form $\omega$ and a real effective divisor $D = \sum_{1 \leq j \leq N} \lambda_j\, E_j$ on $\hat{M}$;
\item the modification $\tau$ is a biholomorphism between $\hat{M} \setminus\operatorname{Supp}(D)$ and $M \setminus E_{nK}([\mu])$;
\item each irreducible divisor $E_j$  is mapped by $\tau$ into $E_{nK}([\mu])$.   
\end{itemize}

Observe that, by (ii) of Proposition \ref{prop:genfinite}, the manifold $\hat{M}$ is weakly K\"ahler hyperbolic, since $\tau^*\mu\in\mathcal W_{\hat{M}}$.  
We want to show that $f(\mathbb{C}) \subseteq E_{nK}([\mu])$. Assume this is not the case, then the map $f$ can be lifted to an entire curve $\hat{f}\colon \mathbb{C} \to \hat{M}.$ Let $\Phi$ be an Ahlfors current associated to $\hat{f}\colon\mathbb C\to (\hat M,\omega)$. Then, by Lemma \ref{lem:current}, we have
$$ 
0 =[\Phi]\cdot[\tau^*\mu]  = [\Phi]\cdot[\omega] + \sum_j \lambda_j\, [\Phi]\cdot[E_j].
$$
Since $\Phi$ is an Ahlfors current associated to $\hat{f}\colon\mathbb C\to (\hat M,\omega)$, we have $[\Phi]\cdot[\omega]=1$, and therefore there exists $j_0$, $1 \leq j_0 \leq N$, such that $ [\Phi]\cdot[ E_{j_0}] < 0$. 
By \cite[Lemme 1]{Bru99}, it follows that $\hat{f}(\mathbb{C}) \subseteq E_{j_0}$. Since $\tau(E_{j_0}) \subseteq E_{nK}([\mu])$, then  
$f(\mathbb{C}) \subseteq E_{nK}([\mu])$ and we get a contraddiction. 

Finally, if $Z_M$ has dimension $1$ with no rational or elliptic components, then there exists no entire curves in $M$, and $M$ is Kobayashi hyperbolic by Brody's criterion.   
\end{proof}    

We can now come to the proof of Lang's conjecture for Kähler hyperbolic manifolds.

\begin{thm}\label{thm:langkhyp}
Let $M$ be a K\"ahler hyperbolic manifold, and let $X\subseteq M$ be a possibly singular closed subvariety. 

Then, any desingularization $\hat X\to X$ is of general type, \textsl{i.e.} $X$ is of general type.
\end{thm}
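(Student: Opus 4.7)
The plan is to reduce the statement to the general typeness result for weakly K\"ahler hyperbolic manifolds (Theorem \ref{thm:weaklygeneral}) by producing on a desingularization a semi-K\"ahler hyperbolic structure inherited from $M$.

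First, pick any desingularization $\sigma\colon\hat{X}\to X$ and form the composition
$$
F\colon \hat{X}\xrightarrow{\;\sigma\;} X\hookrightarrow M.
$$
Since $\sigma$ is a proper bimeromorphic morphism and $X\hookrightarrow M$ is an embedding, $F$ is a generically finite (in fact generically injective) holomorphic map from the compact K\"ahler manifold $\hat{X}$ into $M$.

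Next, apply Proposition \ref{prop:genfinite}\,(i) with $(N,\hat M):=(\hat X,M)$: since $M$ is K\"ahler hyperbolic and $F$ is generically finite onto its image $F(\hat X)=X$, the pull-back $F^*\omega$ of a K\"ahler hyperbolic form $\omega$ on $M$ is a smooth semi-positive closed real $(1,1)$-form on $\hat X$ which is strictly positive on the Zariski open set $F^{-1}(U)$, where $U=X_{\mathrm{reg}}\setminus F(R)$ and $R$ is the ramification locus of $F$. Moreover, by Lemma \ref{bounded}, if $\alpha$ is a bounded primitive of $\pi_M^*\omega$ on the universal cover $\tilde M$ of $M$, then $\tilde F^*\alpha$ is bounded on $\tilde{\hat X}$ and is a primitive of $\pi_{\hat X}^*F^*\omega$. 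Hence $(\hat X,F^*\omega)$ is semi-K\"ahler hyperbolic in the sense of Definition \ref{def:semiK}.

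Finally, every semi-K\"ahler hyperbolic manifold is weakly K\"ahler hyperbolic (see the example right after the definition of big and nef classes, observing that $\int_{\hat X}(F^*\omega)^{\dim\hat X}>0$ since $F$ is generically finite onto $X$ and $\omega$ restricts to a K\"ahler form on $X_{\mathrm{reg}}$). Therefore, Theorem \ref{thm:weaklygeneral} applies to $\hat X$: it is projective and of general type, that is $K_{\hat X}$ is big. By definition, this means that $X$ is of general type.

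The statement contains no real obstacle once the machinery of Proposition \ref{prop:genfinite} and Theorem \ref{thm:weaklygeneral} is in place; the only point one has to verify is that the pull-back argument is insensitive to the fact that $F$ is not surjective onto $M$, which is precisely why part (i) of Proposition \ref{prop:genfinite} is stated for arbitrary generically finite morphisms rather than for surjective ones.
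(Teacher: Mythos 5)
Your proof is correct and follows essentially the same route as the paper: compose the desingularization with the inclusion to get a generically finite (generically injective) map $\hat X\to M$, invoke Proposition \ref{prop:genfinite}\,(i) to see that $\hat X$ is semi-K\"ahler hyperbolic (hence weakly K\"ahler hyperbolic), and conclude by Theorem \ref{thm:weaklygeneral}. The only cosmetic difference is that the paper first reduces to $X$ irreducible so that $\hat X$ is connected, a hypothesis of Proposition \ref{prop:genfinite}, which you should also note.
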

\begin{proof}
Being K\"ahler hyperbolic is a hereditary property for closed complex submanifolds. We already know that a K\"ahler hyperbolic manifold is of general type, therefore we are left to prove that singular subvarieties of $M$ are of general type. 
Given $X$ such a subvariety,  we may assume $X$ to be irreducible, and let $\nu\colon \hat X\to X$ be a desingularization. Then $\hat X$ is a compact connected K\"ahler manifold, and the map $\nu\colon \hat X \to M$ is a generically one-to-one map, so by (i) of Proposition \ref{prop:genfinite} we know that $\hat X$ is semi-K\"ahler hyperbolic. We thus conclude by Theorem \ref{thm:weaklygeneral} that $\hat X$ is of general type as desired.
\end{proof}

We can also prove a statement in the opposite direction, \textsl{i.e.} that a manifold of general type together with all of its subvarieties is Kobayashi hyperbolic. 

\begin{prop}\label{prop:Kob}
Let $M$ be a weakly K\"ahler hyperbolic manifold, and assume that any positive dimensional irreducible subvariety $X$ of $M$ has a  weakly K\"ahler hyperbolic desingularization. Then, $M$ is Kobayashi hyperbolic.
\end{prop}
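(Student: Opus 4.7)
The plan is to argue via Brody's criterion: since $M$ is compact, it suffices to exclude the existence of a non-constant entire curve $f\colon\mathbb{C}\to M$. Suppose by contradiction that such an $f$ exists, and let $X\subseteq M$ denote the Zariski closure of $f(\mathbb{C})$. If $X=M$, then Theorem \ref{thm:GG} applied directly to $M$ yields $f(\mathbb{C})\subseteq Z_M\subsetneq M$, contradicting Zariski density. So I may assume that $X$ is a proper, positive dimensional, irreducible closed subvariety of $M$; by the standing hypothesis of the proposition, $X$ then admits a weakly K\"ahler hyperbolic desingularization $\pi\colon\hat{X}\to X$.

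The next step is to lift $f$ to an entire curve $\hat{f}\colon\mathbb{C}\to\hat{X}$. Since the singular locus $S\subset X$ is a proper subvariety and $f(\mathbb{C})$ is Zariski dense in $X$, the set $f^{-1}(S)$ is a discrete subset of $\mathbb{C}$. Over $\mathbb{C}\setminus f^{-1}(S)$ one lifts $f$ canonically through the isomorphism $\pi^{-1}(X\setminus S)\cong X\setminus S$, and the extension across $f^{-1}(S)$ is then provided by properness of $\pi$ together with compactness of $\hat{X}$: either through a Riemann-type removable singularity argument, or equivalently by taking the normalization of the unique irreducible component of the pullback $\mathbb{C}\times_X\hat{X}$ which dominates $\mathbb{C}$ birationally.

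Now $\hat{f}$ is a non-constant entire curve in the weakly K\"ahler hyperbolic manifold $\hat{X}$, so Theorem \ref{thm:GG} forces $\hat{f}(\mathbb{C})\subseteq Z_{\hat{X}}\subsetneq\hat{X}$. Pushing forward, $f(\mathbb{C})=\pi(\hat{f}(\mathbb{C}))\subseteq\pi(Z_{\hat{X}})$. To derive the contradiction, I would verify that $\pi(Z_{\hat{X}})$ is a proper subvariety of $X$: letting $U\subseteq X$ be a Zariski open subset over which $\pi$ restricts to an isomorphism, the intersection $\pi^{-1}(U)\cap Z_{\hat{X}}$ is a proper closed subvariety of the irreducible open set $\pi^{-1}(U)$, hence its image is a proper closed subvariety of $U$; by the isomorphism property, no point of $U$ outside this image can belong to $\pi(Z_{\hat{X}})$. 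This contradicts the Zariski density of $f(\mathbb{C})$ in $X$, completing the argument. The proof is essentially a \emph{matryoshka} application of Theorem \ref{thm:GG}, and the only technical delicacy is the lifting step, which works precisely because the Zariski closure of $f(\mathbb{C})$ cannot be contained in the exceptional locus of $\pi$.
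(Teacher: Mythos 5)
Your proposal is correct and follows essentially the same route as the paper's proof: argue by contradiction, take the Zariski closure $X$ of the entire curve, pass to the weakly K\"ahler hyperbolic desingularization, lift the curve (possible precisely because Zariski density keeps it out of the non-isomorphism locus), apply Theorem \ref{thm:GG} upstairs, and contradict density downstairs. Your extra care with the case $X=M$ and with the removable-singularity step of the lifting only makes explicit what the paper leaves implicit.
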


\begin{proof}
Assume by contradiction that $M$ is not Kobayashi hyperbolic, and let $f: \mathbb{C} \to M$ be an entire curve. Let $X$ be the Zariski closure of $f(\mathbb C)$ and let $\tau\colon\hat{X} \to X$ be a weakly Kähler hyperbolic desingularization of $X$. Let $E\subset\hat X$ be the exceptional locus of $\tau$. Since $f(\mathbb{C})$ is Zariski dense in $X$, it is not contained in $\tau(E)$. Then, there exists a lifting $\hat{f}: \mathbb{C}  \to \hat{X}$ of $f$. Since $\hat{X}$ is weakly K\"ahler hyperbolic then by Proposition \ref{thm:GG} $\hat{f}(\mathbb{C}) \subseteq Z_{\hat X}$, so that $f(\mathbb{C}) \subseteq \tau(Z_{\hat X})$. This is impossible since $f(\mathbb{C})$ is Zariski dense in $X$ and $X$ is irreducible.
\end{proof}

\subsection{Surfaces}

To finish with, we restrict our attention to the case of surfaces. Indeed, in dimension two it is possible to get stronger statements. First, we generalize the exact analogous of \cite[Theorem 0.4.A, Remark 0.4.B]{Gro91} in the weakly Kähler hyperbolic case in the next theorem and corollary.

\begin{thm} 
Let $S$ be a weakly K\"ahler hyperbolic surface. Let $\omega$ be an arbitrarily fixed K\"ahler form on $S$. Let $\pi\colon(\tilde{S},\tilde{\omega})\to (S,\omega)$ be the K\"ahler universal covering of $(S,\omega)$. Finally, let 
$$
\Delta_{\overline{\partial},p,q}\colon L^2\Omega^{p,q}(\tilde{S},\tilde{\omega})\to L^2\Omega^{p,q}(\tilde{S},\tilde{\omega})\quad \mathrm{and}\quad \Delta_{k}\colon L^2\Omega^{k}(\tilde{S},\tilde{\omega})\to L^2\Omega^{k}(\tilde{S},\tilde{\omega})
$$
be the $L^2$-closures of $\Delta_{\overline{\partial},p,q}\colon\Omega_c^{p,q}(\tilde{S})\to \Omega_c^{p,q}(\tilde{S})$ and $\Delta_{k}\colon\Omega_c^{k}(\tilde{S})\to \Omega_c^{k}(\tilde{S})$, respectively. 

Then, we have the following two statements.

\begin{enumerate}
\item If $p+q\neq 2$, then $0\notin \sigma(\Delta_{\overline{\partial},p,q})$. If $p+q=2$, then $\ker(\Delta_{\overline{\partial},p,q})$ is infinite dimensional and $\mathrm{im}(\Delta_{\overline{\partial},p,q})$ is closed.
\item If $k\neq 2$, then $0\notin \sigma(\Delta_{k})$. If $k=2$, then $\ker(\Delta_{k})$ is infinite dimensional and $\mathrm{im}(\Delta_{k})$ is closed.
\end{enumerate}
\end{thm}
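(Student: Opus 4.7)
My plan is to reduce both statements to a short list of bidegree cases and then invoke Theorem \ref{thm:spectralgap}, Corollary \ref{cor:existencel2}, and Atiyah's $L^{2}$-index theorem. Since $(\tilde{S},\tilde{\omega})$ is complete K\"ahler, the identity $\Delta_{d}=2\Delta_{\overline{\partial}}$ implies $\Delta_{k}=2\bigoplus_{p+q=k}\Delta_{\overline{\partial},p,q}$, so statement (2) will follow directly from statement (1). For (1), complex conjugation yields $\Delta_{\overline{\partial},p,q}\cong\Delta_{\overline{\partial},q,p}$ and the Hodge $\ast$ operator gives $\Delta_{\overline{\partial},p,q}\cong\Delta_{\overline{\partial},m-q,m-p}$ with $m=2$, so I only need to treat the bidegrees $(0,0)$, $(1,0)$, $(2,0)$ and $(1,1)$.

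I will first address the bidegrees $(p,0)$. For $p=0$, the K\"ahler identity $2\Delta_{\overline{\partial},0,0}=\Delta_{0}$ combined with Corollary \ref{cor:zeronotspect} gives $0\notin\sigma(\Delta_{\overline{\partial},0,0})$ at once. For $p=2$, Corollary \ref{cor:existencel2} already delivers on $\tilde{S}$ both the infinite-dimensional kernel and the closed range. For $p=1$, Theorem \ref{thm:spectralgap} provides the spectral gap on $(\tilde{S'},\tilde{\omega'})$ for a suitable modification $\nu\colon S'\to S$, and I will then need to transfer the gap to $(\tilde{S},\tilde{\omega})$. In complex dimension two such a transfer is tractable because $\nu$ is a composition of blow-ups at points, $\pi_{1}(S')=\pi_{1}(S)$, and the exceptional divisors have real codimension two, so the natural $L^{2}$-pull-back essentially identifies the two Hilbert spaces.

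For the diagonal case $(1,1)$ I plan to use Atiyah's $L^{2}$-index theorem (Theorem \ref{thm:gammaHodge}) applied to $\Omega^{1}_{S}$. Combining the vanishing $h^{1,0}_{(2)}(S)=0$ from Corollary \ref{cor:existencel2} with $h^{1,2}_{(2)}(S)=h^{0,1}_{(2)}(S)=h^{1,0}_{(2)}(S)=0$ from the symmetries, one obtains
$$
-h^{1,1}_{(2)}(S)=\chi(\Omega^{1}_{S})=\frac{K_{S}^{2}-5c_{2}(S)}{6},
$$
the second equality being Hirzebruch--Riemann--Roch. Since $S$ is of general type by Theorem \ref{thm:weaklygeneral}, Bogomolov--Miyaoka--Yau gives $K_{S}^{2}\leq 3c_{2}(S)$; furthermore the Atiyah formula for $\chi_{\mathrm{top}}(S)$ together with Corollary \ref{cor:existencel2} forces
$$
c_{2}(S)=2h^{2,0}_{(2)}(S)+h^{1,1}_{(2)}(S)\geq 2h^{2,0}_{(2)}(S)>0.
$$
Hence $\chi(\Omega^{1}_{S})<0$, so $h^{1,1}_{(2)}(S)>0$, and \cite[Lemma 15.10]{Roe98} upgrades this $\pi_{1}(S)$-invariant positivity to infinite-dimensionality as a complex vector space. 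Closed range of $\Delta_{\overline{\partial},1,1}$ then follows from the $L^{2}$-Hodge decomposition together with the closed ranges of $\overline{\partial}_{1,0}$ and $\overline{\partial}_{1,1}$, which are forced respectively by the spectral gaps at bidegrees $(1,0)$ and $(1,2)$.

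The hard part will be the transfer of the spectral gap for $\Delta_{\overline{\partial},1,0}$ from $\tilde{S'}$ to $\tilde{S}$ itself, since Theorem \ref{thm:spectralgap} is only stated on the modification. The two-dimensional structure of blow-downs makes this accessible, but it is the step where the core analytic work is concentrated; once it is settled, the remaining ingredients (the index-theoretic computation and the closed-range argument) are essentially formal.
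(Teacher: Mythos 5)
There is a genuine gap, and it sits exactly where you locate it: the spectral gap for $\Delta_{\overline{\partial},1,0}$ (equivalently, by conjugation and the Hodge star, for all $\Delta_{\overline{\partial},p,q}$ with $p+q\neq 2$ other than $(0,0)$ and $(2,2)$) on $\tilde{S}$ itself. Your proposed justification for transferring the gap from the modification $\tilde{S'}$ --- that $\nu$ is a composition of point blow-ups, $\pi_1(S')=\pi_1(S)$, and the exceptional set has real codimension two, so that ``the natural $L^2$-pull-back essentially identifies the two Hilbert spaces'' --- does not work. The metric $\nu^*\omega$ degenerates along the exceptional divisors and is not quasi-isometric to $\omega'$, so the $L^2$-spaces of $(1,0)$-forms for $\tilde{\omega}$ and $\tilde{\omega}'$ are not identified: the pull-back is an isometry only in bidegree $(m,0)$ and bounded injective in bidegrees $(m,q)$ (this is \eqref{pull1} and \eqref{pull2}, and the remark following the surface theorem in the paper stresses precisely that these properties fail in other bidegrees). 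Spectral gaps are not birational invariants in any such naive sense; what \emph{is} birationally invariant is the $L^2$-Hodge number $h^{p,0}_{(2)}$ (\cite[11.4 Corollary]{Kol95}). The paper's proof circumvents the direct transfer: it gets $\ker\Delta_{\overline{\partial},1,0}=\{0\}$ on $\tilde S$ from Theorem \ref{thm:spectralgap} via this birational invariance (hence $\ker\Delta_{\overline{\partial},2,1}=\{0\}$ by the star operator), obtains closed range of $\Delta_{\overline{\partial},2,1}$ from the closed range of $\overline{\partial}_{2,0}$ on $\tilde S$ (already transferred in Corollary \ref{cor:existencel2} using the $(m,0)$/$(m,q)$ pull-back properties) together with $0\notin\sigma(\Delta_{\overline{\partial},2,2})$ coming from Corollary \ref{cor:zeronotspect}, and then concludes $0\notin\sigma(\Delta_{\overline{\partial},2,1})$ since trivial kernel plus closed range is exactly a spectral gap for a self-adjoint operator. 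Without this (or an equivalent) mechanism, your closed-range argument for $\Delta_{\overline{\partial},1,1}$, which invokes the gaps at $(1,0)$ and $(1,2)$, also remains unfounded.

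The rest of your plan is sound and partly diverges from the paper in an interesting way. The reductions via the K\"ahler identity, conjugation and the Hodge star, the $(0,0)$ case via Corollary \ref{cor:zeronotspect}, and the $(2,0)$ case via Corollary \ref{cor:existencel2} coincide with the paper. For the nontriviality of $\ker\Delta_{\overline{\partial},1,1}$ you propose Atiyah's index theorem for $\Omega^1_S$ plus Hirzebruch--Riemann--Roch, Bogomolov--Miyaoka--Yau and the positivity $c_2(S)=2h^{2,0}_{(2)}(S)+h^{1,1}_{(2)}(S)>0$, followed by \cite[Lemma 15.10]{Roe98}; this is a legitimate alternative to the paper's route, which instead reuses the Vafa--Witten twisting argument ($0\in\sigma(\overline{\eth}_1)$) together with the gaps at $(1,0)$ and $(1,2)$ and the closed range of $\Delta_{\overline{\partial},1,1}$. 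In effect you run the paper's subsequent corollary (signs of $\chi(S,\Omega^p_S)$) backwards, which is fine, but note that both your non-vanishing argument and the closed-range step still presuppose the missing spectral gap, so the index-theoretic detour does not reduce the amount of analysis you must supply at the crucial point.
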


\begin{proof}
Thanks to Corollary \ref{cor:zeronotspect} we know that $0\notin \sigma(\Delta_{0})$. Since $(\tilde{S},\tilde{\omega})$ is K\"ahler and complete we know that 
$$
\Delta_{\overline{\partial},0,0}\colon L^2\Omega^{0,0}(\tilde{S},\tilde{\omega})\to L^2\Omega^{0,0}(\tilde{S},\tilde{\omega})
$$ 
equals 
$$
\frac{1}{2}\Delta_{0}\colon L^2\Omega^{0}(\tilde{S},\tilde{\omega})\to L^2\Omega^{0}(\tilde{S},\tilde{\omega}).
$$ 
Therefore $0\notin \sigma(\Delta_{\overline{\partial},0,0})$ and, using the Hodge star operator, we conclude also that $0\notin \sigma(\Delta_{\overline{\partial},2,2})$. 

As a next step we want to show that $0\notin \sigma(\Delta_{\overline{\partial},2,1})$. This is equivalent to proving that $\ker(\Delta_{\overline{\partial},2,1})=\{0\}$ and that 
$$
\Delta_{\overline{\partial},2,1}\colon L^2\Omega^{2,1}(\tilde{S},\tilde{\omega})\to L^2\Omega^{2,1}(\tilde{S},\tilde{\omega})
$$ 
has closed range. By Theorem \ref{thm:spectralgap} we know that
$$
\Delta_{\overline{\partial},1,0}\colon L^2\Omega^{1,0}(\tilde{S}',\tilde{\omega}')\to L^2\Omega^{1,0}(\tilde{S}',\tilde{\omega}')
$$ 
has trivial kernel. Thanks to \cite[11.4 Corollary]{Kol95}, the fact that $(\tilde{S},\tilde{\omega})$ is K\"ahler and complete, and using the Hodge star operator we can conclude that 
$$
\Delta_{\overline{\partial},2,1}\colon L^2\Omega^{2,1}(\tilde{S},\tilde{\omega})\to L^2\Omega^{2,1}(\tilde{S},\tilde{\omega})
$$ 
has trivial kernel, too. In order to show that $\im(\Delta_{\overline{\partial},2,1})$ is closed it is enough to show that both 
$$
\overline{\partial}_{2,0}\colon L^2\Omega^{2,0}(\tilde{S},\tilde{\omega})\to L^2\Omega^{2,1}(\tilde{S},\tilde{\omega})
$$ 
and 
$$
\overline{\partial}_{2,1}^*\colon L^2\Omega^{2,2}(\tilde{S},\tilde{\omega})\to L^2\Omega^{2,1}(\tilde{S},\tilde{\omega})
$$ 
have closed range; this latter implication follows easily by using the $L^2$-Hodge--Kodaira decomposition. Note that by the proof of Corollary \ref{cor:existencel2} we already know that $\overline{\partial}_{2,0}\colon L^2\Omega^{2,0}(\tilde{S},\tilde{\omega})\to L^2\Omega^{2,1}(\tilde{S},\tilde{\omega})$ has closed range.

Moreover, as a consequence of the fact that $0\notin \sigma(\Delta_{\overline{\partial},2,2})$, we obtain that both 
$$
\overline{\partial}_{2,1}\colon L^2\Omega^{2,1}(\tilde{S},\tilde{\omega})\to L^2\Omega^{2,2}(\tilde{S},\tilde{\omega})
$$ and
$$
\overline{\partial}_{2,1}^*\colon L^2\Omega^{2,2}(\tilde{S},\tilde{\omega})\to L^2\Omega^{2,1}(\tilde{S},\tilde{\omega})
$$ 
have closed range. Thus, we can conclude that $0\notin \sigma(\Delta_{\overline{\partial},2,1})$. 

Now, by using conjugation, the Hodge star operator, the fact that $(\tilde{S},\tilde{\omega})$ is K\"ahler and complete, and the $L^2$-Hodge--Kodaira decomposition, we obtain that $0\notin \sigma(\Delta_{\overline{\partial},p,q})$, with $p+q\neq 2$, and that $\im(\Delta_{\overline{\partial},p,q})$ is closed for each $p,q\in\{0,1,2\}$. Since we have already observed in the proof of Corollary \ref{cor:existencel2} that $0\in \sigma(\overline{\eth}_p)$ for each $p\in \{0,1,2\}$, we can conclude as in Corollary \ref{cor:existencel2} that $\Delta_{\overline{\partial},p,2-p}\colon L^2\Omega^{p,2-p}(\tilde{S},\tilde{\omega})\to L^2\Omega^{p,2-p}(\tilde{S},\tilde{\omega})$ has infinite dimensional kernel for each $p=0,1,2$. 
The proof of (1) is thus complete. 

Concerning (2), we observe that since $(\tilde{S},\tilde{\omega})$ is K\"ahler and complete we have an equality between 
$$
2\Delta_{\overline{\partial},p,q}\colon L^2\Omega^{p,q}(\tilde{S},\tilde{\omega})\to L^2\Omega^{p,q}(\tilde{S},\tilde{\omega})
$$ 
and 
$$
\Delta_{p+q}|_{L^2\Omega^{p,q}(\tilde{S},\tilde{\omega})}\colon L^2\Omega^{p,q}(\tilde{S},\tilde{\omega})\to L^2\Omega^{p,q}(\tilde{S},\tilde{\omega})
$$ 
for each $p,q\in\{0,1,2\}$. Now the conclusion follows immediately by the first point.
\end{proof}

\begin{rem}
Note that the above proof does not work to show that $\Delta_{\overline{\partial},p,q}$ has closed range for arbitrary $(p,q)$ because we used in a crucial way \eqref{pull1} and \eqref{pull2} and these properties in general do not hold when $(p,q)$ differs from $(s,0)$ and $(s,q)$ with $q>0$, respectively.
\end{rem}

\begin{cor}
Let $S$ be a weakly K\"ahler hyperbolic surface. Then, for each $0\leq p\leq 2$, the Euler characteristic $\chi(S,\Omega^p_S)$ does not vanish and we have 
$$
\operatorname{sign}\chi(S,\Omega^p_S)=(-1)^p.
$$ 
Moreover $\chi_\textrm{top}(S)>0$ and the $L^2$-Hodge and Betti numbers, $h^{p,q}_{(2)}(S)$ and $h^{p+q}_{(2)}(S)$, vanish if and only if $p+q\neq 2= \dim S$.
\end{cor}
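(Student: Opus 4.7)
The plan is to read off all three assertions by applying Atiyah's $L^2$-index theorem (Theorem \ref{thm:gammaHodge}) together with the vanishing/non-vanishing of $L^2$-Hodge numbers supplied by part (1) of the preceding theorem. For each fixed $p\in\{0,1,2\}$, Theorem \ref{thm:gammaHodge} applied to the universal cover of $S$ gives
$$
\chi(S,\Omega^p_S)=\sum_{q=0}^{2}(-1)^q\,h^{p,q}_{(2)}(S).
$$
By the preceding theorem, $\ker\Delta_{\overline{\partial},p,q}$ is trivial for $p+q\neq 2$, and infinite dimensional, in particular nonzero, for $p+q=2$. Since the canonical trace on the von Neumann algebra of $\pi_1(S)$ is faithful, any nonzero $\Gamma$-invariant closed subspace has strictly positive $\Gamma$-dimension; hence $h^{p,q}_{(2)}(S)=0$ whenever $p+q\neq 2$, and $h^{p,2-p}_{(2)}(S)>0$.

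Substituting back, the sum collapses to a single term
$$
\chi(S,\Omega^p_S)=(-1)^{2-p}\,h^{p,2-p}_{(2)}(S)=(-1)^p\,h^{p,2-p}_{(2)}(S),
$$
which is a positive multiple of $(-1)^p$; this establishes both the nonvanishing and the claimed sign. For the topological Euler characteristic, combining the identity $\chi_{\textrm{top}}(S)=\sum_{p,q}(-1)^{p+q}h^{p,q}(S)$ coming from Hodge decomposition with the computation just performed yields
$$
\chi_{\textrm{top}}(S)=\sum_{p=0}^{2}(-1)^p\,\chi(S,\Omega^p_S)=\sum_{p=0}^{2}h^{p,2-p}_{(2)}(S)>0.
$$

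Finally, the equivalence $h^{p,q}_{(2)}(S)=0$ iff $p+q\neq 2$ is exactly what was established above from part (1) of the preceding theorem together with faithfulness of the trace. For the $L^2$-Betti numbers, the $L^2$-Hodge--Kodaira orthogonal decomposition on the complete K\"ahler cover $(\tilde S,\tilde\omega)$ recalled earlier yields
$$
h^{k}_{(2)}(S)=\sum_{p+q=k}h^{p,q}_{(2)}(S),
$$
which then vanishes for $k\neq 2$ and is strictly positive for $k=2$. There is no substantial obstacle here: the analytic input is already fully discharged by the preceding theorem, and the only point requiring a word of care is the passage from a nonzero $\Gamma$-invariant harmonic space to positive von Neumann dimension, which is a standard consequence of the faithfulness of the group trace.
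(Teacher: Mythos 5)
Your proof is correct and follows essentially the same route as the paper's: the paper deduces the corollary \lq\lq at once\rq\rq{} from the preceding surface theorem combined with Atiyah's $L^2$-index theorem (Theorem \ref{thm:gammaHodge}), which is exactly the argument you spell out, including the faithfulness of the von Neumann trace and the bidegree decomposition of $L^2$-harmonic forms on the complete K\"ahler cover. The paper additionally records an independent classical check of the Euler-characteristic claims (via general typeness, Noether's formula and the Bogomolov--Miyaoka--Yau inequality), but that part is supplementary to the main argument you reproduced.
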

\begin{proof}
This follows at once by the above theorem and Atiyah's $L^2$-index theorem.

Observe anyway, that our surface $S$ is of general type and hence the claims about the various Euler characteristic then follow, since they hold true in general for surfaces of general type. Indeed, we immediately get by \cite[(2.4) Proposition]{BHPVdV04} that $c_2(S)=\chi_\textrm{top}(S)>0$, and by \cite[Theorem X.4]{Bea96} that $\chi(S,\mathcal O_S)>0$. But then, by duality, $\chi(S,K_S)=\chi(S,\mathcal O_S)>0$.
Since 
$$
\chi_\textrm{top}(S)=\chi(S,\mathcal{O}_S)-\chi(S,\Omega^1_S)+\chi(S,K_S),
$$ 
we obtain 
$$
\begin{aligned}
\chi(S,\Omega^1_S) & = 2\,\chi(S,\mathcal O_S)-\chi_\textrm{top}(S) \\
& = \frac{c_1(S)^2+c_2(S)}{6}-c_2(S),
\end{aligned}
$$
thanks to Noether's formula and the  Chern--Gauss--Bonnet Theorem. But then, since $c_2(S)>0$, using the Bogomolov--Miyaoka--Yau inequality we find
$$
\begin{aligned}
6\,\chi(S,\Omega_S^1) & = c_1(S)^2-5\,c_2(S) \\
& < c_1(S)^2-3\, c_2(S)\le 0.
\end{aligned}
$$
\end{proof}

Finally, Proposition \ref{thm:GG} has the following stronger formulation in the case of surfaces, which in turn leads to a confirmation of the strongest possibile version of the Green--Griffiths conjecture for this particular class of surfaces of general type.

\begin{prop}\label{prop:GGdim2}
Let $S$ be a weakly K\"ahler hyperbolic surface. Then, the image of any entire curve in $S$ is contained in $Z_S$, and hence in a rational or elliptic irreducible component of $Z_S$.   

In particular, there exists only a finite number of rational or elliptic curves in $S$, and moreover the Gram intersection matrix of the family of all irreducible rational and elliptic curves is negative definite.  
\end{prop}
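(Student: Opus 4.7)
The plan is to combine Theorem \ref{thm:GG} with the Hodge index theorem on surfaces. First, I would apply Theorem \ref{thm:GG} to conclude that any entire curve $f\colon\mathbb C\to S$ has image contained in $Z_S$. Since $S$ is a surface and $Z_S$ is a proper positive-dimensional subvariety, $Z_S$ is a finite union of irreducible curves, and the Zariski closure of $f(\mathbb C)$ is irreducible (as $\mathbb C$ is), so it lies in a single irreducible component $C$ of $Z_S$. Lifting $f$ through the normalization $\tilde C\to C$ (allowed because $\mathbb C$ is smooth) produces a nonconstant holomorphic map $\mathbb C\to\tilde C$ into a compact Riemann surface; the classical Liouville--Picard argument then forces the genus of $\tilde C$ to be at most one, so $C$ is rational or elliptic.

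Conversely, every irreducible rational or elliptic curve in $S$ admits a nonconstant entire parametrization $\mathbb C\to C$, so by Theorem \ref{thm:GG} it sits inside $Z_S$; being irreducible and $1$-dimensional inside the $1$-dimensional $Z_S$, it must coincide with one of its irreducible components. Finiteness of the family of rational and elliptic curves in $S$ then follows at once from the fact that the analytic set $Z_S$ has only finitely many irreducible components.

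For the last assertion, I would pick $\mu\in\mathcal W_S$ realizing $Z_S=\operatorname{Null}([\mu])$, as provided by the Remark following the definition of $Z_M$. On a surface the only candidate subvariety $Z$ with $\dim Z\ge 2$ eligible for the null-locus union is $S$ itself, but $\int_S\mu^2=\operatorname{vol}([\mu])>0$ since $[\mu]$ is big; therefore each irreducible component $C_i$ of $Z_S$ must itself appear in the union, i.e.\ satisfies $[\mu]\cdot C_i=0$. The Hodge index theorem, applied to the class $[\mu]$ with $[\mu]^2>0$, then gives that the intersection form on $[\mu]^\perp\subseteq NS(S)_{\R}$ is negative definite. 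The main remaining step---and the one I expect to require the most care---is the $\R$-linear independence of the classes $[C_i]$ in $NS(S)_{\R}$. To establish it, I would assume a nontrivial numerical relation $\sum a_i[C_i]\equiv 0$ and split it as $D^+ - D^-$ with $D^\pm$ effective $\R$-divisors supported on disjoint subfamilies of the $C_i$'s: then $D^+\equiv D^-$ and hence $(D^+)^2=D^+\cdot D^-\ge 0$ by non-negativity of intersections between distinct irreducible curves. Combined with $D^+\cdot\mu=0$, Hodge index forces $D^+\equiv 0$, and pairing with an ample class then gives $D^+=0$ (and symmetrically $D^-=0$), a contradiction. Hence the $[C_i]$ are linearly independent, and the Gram matrix, being the matrix of the negative definite form on $[\mu]^\perp$ in the independent family $\{[C_i]\}$, is itself negative definite.
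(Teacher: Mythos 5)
Your argument is correct, and its first half runs along essentially the same lines as the paper's: entire curves land in $Z_S$ by Theorem \ref{thm:GG}; rational and elliptic curves are themselves contained in $Z_S$ (you obtain this by parametrizing them by $\mathbb C$ and applying Theorem \ref{thm:GG} again, while the paper quotes Proposition \ref{prop:genfinite}(iii) --- same content), hence they are among the finitely many irreducible components of $Z_S$; and the component containing a given entire curve is rational or elliptic by normalization plus Picard, which the paper leaves implicit. Where you genuinely diverge is the negative definiteness. The paper simply invokes Boucksom's results on the non-K\"ahler locus of a big class (\cite[Propositions 3.12 and Theorem 4.5]{Bou04}: the curves contained in $E_{nK}([\mu])$ form an exceptional family, and on a surface such a family has negative definite Gram matrix), whereas you exploit the numerical description $Z_S=\operatorname{Null}([\mu_0])$ from the remark after the definition of $Z_M$ to see that every component is $[\mu_0]$-orthogonal, and then combine the Hodge index theorem (using $[\mu_0]^2=\operatorname{vol}([\mu_0])>0$) with a Zariski-type argument (split a putative relation as $D^+-D^-$ with disjoint supports, use $D^+\cdot D^-\ge 0$ and pairing with an ample class) to get linear independence of the classes $[C_i]$ and hence genuine negative definiteness of the Gram matrix. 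This is a correct, self-contained reproof of the surface case of the quoted statement of \cite{Bou04}; what it buys is independence from that reference, at the modest price of using projectivity (or a K\"ahler class) and the Hodge index theorem, both available here. Two cosmetic points: since $[\mu_0]$ need not be a rational class, you should state the Hodge index theorem on $H^{1,1}(S,\mathbb R)$ (signature $(1,h^{1,1}-1)$) rather than on $NS(S)_{\mathbb R}$, which changes nothing because the classes $[C_i]$ lie in $[\mu_0]^\perp$; and the statement concerns the subfamily of rational and elliptic components of $Z_S$, whose Gram matrix is a principal submatrix of the one you control, hence still negative definite.
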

 
 \begin{proof}

By (iii) of Proposition \ref{prop:genfinite}, any rational or elliptic curve lies in $Z_S$, hence it is an irreducible component of $Z_S$. In particular, there is only a finite number 
$\{C_1,C_2, \dots,C_N\}$ of rational and elliptic curves. 

By Theorem \ref{thm:GG}, the image of any entire curve is contained in $Z_S$, hence in one of the curves $C_i$, $1 \leq i \leq N$. Given $\mu \in \mathcal{W}_S$, we have that any rational or elliptic curve is in $Z_S\subseteq E_{nK}([\mu])$. Then, by \cite[Propositions 3.12 and Theorem 4.5]{Bou04} the Gram intersection matrix $(C_i\cdot C_j)_{i,j=1,\dots,N}$ is negative definite.
 \end{proof}

\begin{cor}\label{cor:relative}
Let $S$ be a weakly K\"ahler hyperbolic surface, and let $C$ be the union of all its rational and elliptic curves. If we contract each connected component of $C$ to a point, we get a (possibly singular) Kobayashi hyperbolic surface. 

In particular, two distinct points of $S$ have Kobayashi distance $0$ in $S$ if and only if they both belong to the same connected component of $C$, and hence $S$ is Kobayashi hyperbolic modulo $C$.  
\end{cor}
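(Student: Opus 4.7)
The plan is to contract the components of $C$ using Grauert's contractibility theorem, observe that the resulting surface is Kobayashi hyperbolic, and then transfer the hyperbolicity statement back to $S$.

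First, by Proposition~\ref{prop:GGdim2} the Gram intersection matrix of the (finite!) family of irreducible rational and elliptic curves of $S$ is negative definite. Hence the restriction of this matrix to each connected component of $C$ is also negative definite, and Grauert's contractibility criterion produces a compact (possibly singular) normal complex surface $S'$ together with a proper holomorphic map $\pi\colon S\to S'$ which contracts each connected component of $C$ to a point and is a biholomorphism onto its image over $S'\setminus\pi(C)$.

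Next, I would show that $S'$ is Kobayashi hyperbolic via Brody's criterion (valid for compact complex spaces). Suppose $f\colon\mathbb C\to S'$ were a non-constant entire curve. Since $\pi(C)$ is a finite set of points, either $f(\mathbb C)\subseteq\pi(C)$ (in which case $f$ is constant by connectedness of $\mathbb C$), or $A:=f^{-1}\bigl(\pi(C)\bigr)$ is a discrete subset of $\mathbb C$. In the latter case, using that $\pi$ is a biholomorphism over $S'\setminus\pi(C)$, we obtain a lift $\tilde f\colon\mathbb C\setminus A\to S\setminus C$. Since $S$ is projective (by Theorem~\ref{thm:weaklygeneral}, using that weakly K\"ahler hyperbolic implies of general type), the classical meromorphic extension theorem for holomorphic maps from punctured disks into compact K\"ahler manifolds provides a meromorphic extension $\mathbb C\dashrightarrow S$; as the source is one-dimensional, this extension is in fact a genuine holomorphic map $\tilde f\colon\mathbb C\to S$. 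Applying Proposition~\ref{prop:GGdim2} to $\tilde f$, its image is contained in a single irreducible rational or elliptic component of $C$, hence $f=\pi\circ\tilde f$ is constant. This contradicts the fact that $A$ is a proper discrete subset of $\mathbb C$, and shows $S'$ is Kobayashi hyperbolic.

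Finally, I derive the statement on the Kobayashi pseudodistance of $S$. Since $\pi$ is holomorphic it is distance-decreasing, so $d_{S'}\bigl(\pi(p),\pi(q)\bigr)\le d_S(p,q)$ for all $p,q\in S$. If $p\ne q$ and $d_S(p,q)=0$, the hyperbolicity of $S'$ forces $\pi(p)=\pi(q)$; since the fibers of $\pi$ are either singletons or full connected components of $C$, the points $p$ and $q$ must lie in the same connected component of $C$. Conversely, if $p$ and $q$ lie in the same connected component of $C$, one connects them by a chain $p=p_0,p_1,\dots,p_k=q$ where consecutive $p_i,p_{i+1}$ lie on a common irreducible component $C_{j_i}$ of $C$. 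Each $C_{j_i}$ is a (possibly singular) rational or elliptic curve, hence has identically vanishing Kobayashi pseudodistance on itself, and the distance-decreasing property of the inclusion $C_{j_i}\hookrightarrow S$ combined with the triangle inequality yields $d_S(p,q)=0$. This establishes the equivalence, and in particular shows that $S$ is Kobayashi hyperbolic modulo $C$.

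The main delicate point is the extension of the lift $\tilde f$ across $A$: this is where we use that $S$ is projective (a non-trivial consequence of the weakly K\"ahler hyperbolic hypothesis through Theorem~\ref{thm:weaklygeneral}) rather than merely compact complex, so that the classical meromorphic extension theorems apply and, combined with the one-dimensionality of the source, yield a holomorphic extension.
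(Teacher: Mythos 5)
Your proof follows essentially the same route as the paper's: contract the connected components of $C$ via Grauert's criterion (using the negative definiteness supplied by Proposition \ref{prop:GGdim2}), show the contracted surface $S'$ is Brody, hence Kobayashi, hyperbolic by lifting a putative entire curve back to $S$ and invoking Proposition \ref{prop:GGdim2}, and then transfer the statement back to $S$ via the distance-decreasing property and the chain-of-curves argument. The one place where you go beyond the paper --- justifying the extension of the lift $\tilde f$ across the discrete set $A$, which you rightly single out as the delicate point --- is also the one place where your argument as written does not hold up: there is no \lq\lq classical meromorphic extension theorem\rq\rq{} for arbitrary holomorphic maps from punctured disks into compact K\"ahler (or even projective) manifolds. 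The map $z\mapsto[e^{1/z}:1]$ from $\mathbb D^*$ to $\mathbb P^1$ has an essential singularity at the origin and extends neither holomorphically nor meromorphically; such extension results require an extra hypothesis (finite area of the graph near the puncture, \`a la Bishop, or hyperbolic embeddedness of the target), so projectivity of $S$ alone does not rescue the step. What actually makes the lift extend is not the positivity of the target but the bimeromorphic structure of the contraction: $\pi^{-1}\colon S'\dashrightarrow S$ is a meromorphic map whose indeterminacy is contained in the finite set $\pi(C)$, hence $\pi^{-1}\circ f$ is a well-defined meromorphic map from $\mathbb C$ to the compact manifold $S$, and a meromorphic map from a smooth curve to a compact manifold is holomorphic because its indeterminacy locus has codimension at least two. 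With that substitution your argument is complete and coincides with the paper's, which simply asserts the existence of the lift without further comment.
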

 
 \begin{proof}
The proof is on the line of the proof of \cite[Proposition 2.1]{Gra89}. If two distinct points in $S$ belong to the same connected component of $C$, then there is a chain of rational or elliptic curves connecting them, therefore their Kobayashi distance vanishes. Conversely, by Proposition \ref{prop:GGdim2}, the Gram intersection matrix of the irreducible components of $C$ is negative definite, and every entire curve in $S$ has image contained in some connected component of $C$. 

By Grauert's criterion \cite[(2.1) Theorem 2.1, Chapter III]{BHPVdV04}), we can contract each such connected component to a point to obtain a possibly singular surface $S'$. Assume $S'$ is not Kobayashi hyperbolic and let $f\colon\mathbb{C} \to S'$ be an entire curve. Then, we can lift $f$ to an entire curve $\tilde{f}$ in $S$. 
Since the image of $\tilde{f}$ is contained in some connected component of $C$, then $f$ is constant and this gives a contradiction. 

In particular, the decreasing property of the Kobayashi pseudometric implies that if two distinct points have zero Kobayashi distance in $S$ their image must be the same in $S'$, so that they must both belong to some connected component of $C$. 
\end{proof}

\bibliography{bibliography}{}
	
\end{document}